\newtheorem{mainthm}{Theorem}
\newtheorem{theorem}{Theorem}[section]
\newtheorem{proposition}[theorem]{Proposition}
\newtheorem{corollary}[theorem]{Corollary}
\newtheorem{conjecture}[theorem]{Conjecture}
\theoremstyle{definition}
\newtheorem{definition}[theorem]{Definition}
\newtheorem{remark}[theorem]{Remark}
{
\newtheorem{examplecore}[theorem]{Example}}
\newenvironment{example}{\begin{examplecore}}{\hspace*{\fill}
$\square$\par\vspace{.1cm}\end{examplecore}}
\newcommand{\op}{\operatorname}
\begin{document}

\title{On Farrell--Tate cohomology of $\op{SL}_2$ over $S$-integers}  

\author{Alexander D. Rahm and Matthias Wendt}

\address{Alexander D. Rahm, 
 Mathematics Research Unit of Universit\'e du Luxembourg,
6, rue Richard Coudenhove-Kalergi,
L-1359 Luxembourg }
\email{Alexander.Rahm@uni.lu}
\address{Matthias Wendt, Fakult\"at Mathematik,
Universit\"at Duisburg-Essen, Thea-Leymann-Strasse 9, 45127, Essen,
Germany}
\email{matthias.wendt@uni-due.de}

\subjclass[2010]{ 20G10, 11F75. \\ 
\textit{Keywords:} Farrell--Tate cohomology, $S$-arithmetic groups, group homology}

\date{\today}

\begin{abstract}
In this paper, we provide number-theoretic formulas for Farrell--Tate cohomology for $\op{SL}_2$ over rings of $S$-integers in number fields satisfying a weak regularity assumption. These formulas describe group cohomology above the virtual cohomological dimension, and can be used to study some questions in homology of linear groups.

We expose three applications, to (I) detection questions for the Quillen conjecture, 
\\(II) the existence of transfers for the Friedlander--Milnor conjecture, 
\\(III) cohomology of $\op{SL}_2$ over number fields. 
\end{abstract}

\maketitle

\setcounter{tocdepth}{1}

\section{Introduction}
\label{sec:intro}

The cohomology of arithmetic groups like $\op{SL}_2(\mathcal{O}_{K,S})$ for
$\mathcal{O}_{K,S}$ a ring of $S$-integers in a number field $K$ has been long and intensively studied. In principle, the cohomology groups can be computed from the action of $\op{SL}_2$ on its associated symmetric space, but actually carrying out this program involves a lot of questions and difficulties from algebraic number theory. 
In the case of a group $\Gamma$ of finite virtual cohomological dimension (vcd), Farrell--Tate cohomology provides a modification of group cohomology, which in a sense describes the obstruction for $\Gamma$ to be a Poincar{\'e}-duality group. The Farrell--Tate cohomology of $\Gamma$ can be described in terms of finite subgroups of $\Gamma$ and their normalizers, and hence is more amenable to computation than group cohomology.

The primary goal of the present paper is to provide explicit formulas for the Farrell--Tate cohomology of $\op{SL}_2$ over many rings of $S$-integers in number fields, with coefficients in $\mathbb{F}_\ell$, $\ell$  odd. These formulas generalize those which had previously been obtained in the case of imaginary quadratic number rings by one of the authors, building on work of Kr\"amer, cf. \cites{rahm:torsion,rahm:vcd}, \cite{kraemer:diplom}.

We will state the main result after introducing some relevant notation. Let $K$ be a global field, let $S$ be a non-empty set of places containing the infinite ones and denote by $\mathcal{O}_{K,S}$ the ring of $S$-integers in $K$. Let $\ell$ be an odd prime different from the  characteristic of $K$, and let $\zeta_\ell$ be some primitive $\ell$-th root of unity. 

Assume $\zeta_\ell+\zeta_\ell^{-1}\in K$. We denote by $\Psi_\ell(T)=T^2-(\zeta_\ell+\zeta_\ell^{-1})T+1$ the relevant quadratic factor of the $\ell$-th cyclotomic polynomial, by $K(\Psi_\ell)=K[T]/\Psi_\ell(T)$ the corresponding $K$-algebra, by $R_{K,S,\ell}=\mathcal{O}_{K,S}[T]/(\Psi_\ell(T))$  the corresponding order in $K(\Psi_\ell)$, and by 
$$
\op{Nm}_1:R_{K,S,\ell}^\times\to
  \mathcal{O}_{K,S}^\times\qquad\textrm{ and }\qquad
\op{Nm}_0:
  \op{Pic}(R_{K,S,\ell})\to 
  \op{Pic}(\mathcal{O}_{K,S}).
$$
the norm maps on unit groups and class groups for the finite extension $R_{K,S,\ell}/\mathcal{O}_{K,S}$.

With this notation, the following is our main result; the main part of the proof can be found in Section~\ref{sec:formulas}. 

\begin{mainthm}
\label{thm:main1}
Let $\ell$ be an odd prime. Let $K$ be a global field of characteristic different from $\ell$ which contains $\zeta_\ell+\zeta_\ell^{-1}$, let $S$ be a non-empty set of places containing the infinite ones and denote by $\mathcal{O}_{K,S}$ the ring of $S$-integers in $K$. Furthermore, assume one of the following conditions: 
\begin{enumerate}[({R}1)]
\item $\zeta_\ell\not\in K$ and the prime $(\zeta_\ell-\zeta_\ell^{-1})$ is unramified in the extension $\mathcal{O}_{K,S}/\mathbb{Z}[\zeta_\ell+\zeta_\ell^{-1}]$. 
\item $\zeta_\ell\in K$ and $S_\ell\in S$, i.e., $S$ contains the places of $K$ lying over $\ell$.
\end{enumerate}
Then we have the following consequences:
\begin{enumerate}
\item $\widehat{\op{H}}^\bullet(\op{SL}_2(\mathcal{O}_{K,S}),\mathbb{F}_\ell)\neq
  0$.
\item The set $\mathcal{C}_\ell$ of conjugacy classes of elements in $\op{SL}_2(\mathcal{O}_{K,S})$ with characteristic polynomial $\Psi_\ell(T)=T^2-(\zeta_\ell+\zeta_\ell^{-1})T+1$ sits in an extension 
$$
1\to \op{coker}\op{Nm}_1\to \mathcal{C}_\ell\to 
\ker\op{Nm}_0\to 0.
$$
The set $\mathcal{K}_\ell$ of conjugacy classes of order $\ell$ subgroups of $\op{SL}_2(\mathcal{O}_{K,S})$ can be identified with the orbit set $\mathcal{K}_\ell=\mathcal{C}_\ell/\op{Gal}(K(\Psi_\ell)/K)$, where the action of $\op{Gal}(K(\Psi_\ell)/K)\cong\mathbb{Z}/2\mathbb{Z}$ exchanges the two roots of $\Psi_\ell(T)$. There is a product decomposition  
$$
\widehat{\op{H}}^\bullet(\op{SL}_2(\mathcal{O}_{K,S}),\mathbb{F}_\ell)\cong  
\prod_{[\Gamma]\in\mathcal{K}_\ell}
\widehat{\op{H}}^\bullet(N_{\op{SL}_2(\mathcal{O}_{K,S})}(\Gamma),
\mathbb{F}_\ell). 
$$
This decomposition is compatible with the ring structure of $\widehat{\op{H}}^\bullet(\op{SL}_2(\mathcal{O}_{K,S}),\mathbb{F}_\ell)$.
\item If the class of  $\Gamma$ is not
  $\op{Gal}(K(\Psi_\ell)/K)$-invariant, 
then 
$
N_{\op{SL}_2(\mathcal{O}_{K,S})}(\Gamma)\cong \ker\op{Nm}_1.
$
There is an isomorphism of graded rings
$$
\widehat{\op{H}}^\bullet(N_{\op{SL}_2(\mathcal{O}_{K,S})}(\Gamma),
\mathbb{Z})_{(\ell)}\cong
\mathbb{F}_\ell[a_2,a_2^{-1}]\otimes_{\mathbb{F}_\ell}\bigwedge 
\left(\ker\op{Nm}_1\right),
$$
where $a_2$ is a cohomology class of degree $2$.
In particular, this is a free module over the subring $\mathbb{F}_\ell[a_2^2,a_2^{-2}]$.
\item
 If the class of  $\Gamma$ is 
  $\op{Gal}(K(\Psi_\ell)/K)$-invariant, 
then there is an extension
$$
0\to \ker\op{Nm}_1\to N_{\op{SL}_2(\mathcal{O}_{K,S})}(\Gamma)\to
\mathbb{Z}/2\mathbb{Z}\to 1. 
$$
There is an isomorphism of graded rings
$$
\widehat{\op{H}}^\bullet(N_{\op{SL}_2(\mathcal{O}_{K,S})}(\Gamma),
  \mathbb{Z})_{(\ell)}\cong
\left(\mathbb{F}_\ell[a_2,a_2^{-1}]\otimes_{\mathbb{F}_\ell}\bigwedge
\left(\ker\op{Nm}_1\right)\right)^{\mathbb{Z}/2}, 
$$
with the $\mathbb{Z}/2$-action given by multiplication with $-1$ on
$a_2$ and $\ker\op{Nm}_1$. In particular, this is a free module over
the subring
$\mathbb{F}_\ell[a_2^2,a_2^{-2}]\cong
\widehat{\op{H}}^\bullet(D_{2\ell},\mathbb{Z})_{(\ell)}$. 
\end{enumerate}
\end{mainthm}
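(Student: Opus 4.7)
The plan is to derive the product decomposition in assertion~(2) from Brown's decomposition theorem for Farrell--Tate cohomology: for groups of finite virtual cohomological dimension, $\widehat{\op{H}}^\bullet(\Gamma,\mathbb{F}_\ell)$ splits as a product indexed by conjugacy classes of non-trivial elementary abelian $\ell$-subgroups $E$, with factor $\widehat{\op{H}}^\bullet(N_\Gamma(E),\mathbb{F}_\ell)$; since $\op{SL}_2$ has $\mathbb{F}_\ell$-rank one for odd $\ell$, only cyclic subgroups of order $\ell$ contribute, and compatibility of the decomposition with the ring structure is automatic. Non-vanishing~(1) then reduces to exhibiting a single element of order~$\ell$, which can be taken to be the companion matrix of $\Psi_\ell$: its entries lie in $\mathcal{O}_{K,S}$ because $\zeta_\ell+\zeta_\ell^{-1}$ is an algebraic integer, and its determinant is the constant term $1$ of $\Psi_\ell$. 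The regularity hypotheses (R1) and (R2) enter to show that \emph{every} element of order $\ell$ in $\op{SL}_2(\mathcal{O}_{K,S})$ has characteristic polynomial exactly $\Psi_\ell$, so that the indexing set in Brown's decomposition is $\mathcal{K}_\ell$ as stated.

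To classify the conjugacy classes in $\mathcal{C}_\ell$, I would invoke a Latimer--MacDuffee type correspondence: an element $\gamma\in\op{GL}_2(\mathcal{O}_{K,S})$ with characteristic polynomial $\Psi_\ell$ endows $\mathcal{O}_{K,S}^2$ with the structure of an $R_{K,S,\ell}$-module, and $\op{GL}_2(\mathcal{O}_{K,S})$-conjugacy corresponds to $R_{K,S,\ell}$-module isomorphism. Under the regularity hypotheses these modules are invertible over $R_{K,S,\ell}$, and the requirement that their restriction to $\mathcal{O}_{K,S}$ be free of rank two is equivalent to triviality of their image under $\op{Nm}_0$, identifying $\op{GL}_2$-classes with $\ker\op{Nm}_0$. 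Passing from $\op{GL}_2$ to $\op{SL}_2$ splits each such class into $\op{coker}\op{Nm}_1$ many $\op{SL}_2$-classes, because the $\op{GL}_2$-centralizer of $\gamma$ is $R_{K,S,\ell}^\times$ while the $\op{SL}_2$-centralizer is $\ker\op{Nm}_1$, with determinant measuring the quotient; this yields the extension in~(2). The Galois involution on $K(\Psi_\ell)$ corresponds under this description to inverting the associated order-$\ell$ element, which generates the same cyclic subgroup, so $\mathcal{K}_\ell=\mathcal{C}_\ell/\op{Gal}(K(\Psi_\ell)/K)$.

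For the normalizer calculations in~(3) and~(4), I would realize $N_{\op{SL}_2(\mathcal{O}_{K,S})}(\Gamma)$ as the intersection with $\op{SL}_2(\mathcal{O}_{K,S})$ of the $K$-normalizer of the maximal torus $R_{K,S,\ell}^\times\subset\op{GL}_2(K)$; over $K$ this is the Weyl extension of the torus by $\mathbb{Z}/2$ acting by inversion. The intersection always contains $\ker\op{Nm}_1$ as its centralizer part, while a Weyl representative of $\mathbb{Z}/2$ descends into $\op{SL}_2(\mathcal{O}_{K,S})$ and normalizes $\Gamma$ precisely when inversion stabilizes the $\op{SL}_2$-conjugacy class of $\Gamma$---which, via the identification of the previous paragraph, is exactly the Galois-invariance condition distinguishing cases~(3) and~(4).

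Finally, for the cohomology computation, $\ker\op{Nm}_1$ is a finitely generated abelian group with $\ell$-torsion subgroup $\Gamma=\mathbb{Z}/\ell$, and a K\"unneth calculation yields $\widehat{\op{H}}^\bullet(\ker\op{Nm}_1,\mathbb{Z})_{(\ell)}\cong\mathbb{F}_\ell[a_2,a_2^{-1}]\otimes\bigwedge(\ker\op{Nm}_1)$ with $a_2$ the degree-two generator of $\widehat{\op{H}}^\bullet(\mathbb{Z}/\ell,\mathbb{Z})_{(\ell)}$. In case~(4) the Weyl element acts by $-1$ on both the torus and on $a_2$, and one takes $\mathbb{Z}/2$-invariants; the result is automatically free over $\mathbb{F}_\ell[a_2^2,a_2^{-2}]$, which is readily identified with the $\ell$-local Farrell--Tate cohomology of $D_{2\ell}$. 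The main obstacle in this program is the second step: verifying that the Latimer--MacDuffee correspondence is well-behaved under (R1)/(R2), which requires controlling the singularities of the possibly non-maximal order $R_{K,S,\ell}$ at primes above $\ell$ and ensuring that $\ker\op{Nm}_0$ and $\op{coker}\op{Nm}_1$ really assemble the fiber structure of $\mathcal{C}_\ell$ cleanly.
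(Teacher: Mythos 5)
Your proposal follows essentially the same route as the paper: Brown's formula indexed by conjugacy classes of cyclic $\ell$-subgroups, a Latimer--MacDuffee-type classification of $\mathcal{C}_\ell$, the algebraic-group argument identifying centralizers with norm-one units and normalizers with (at most) $\mathbb{Z}/2$-extensions detected by Galois-invariance of the associated ideal class, and K\"unneth plus $\mathbb{Z}/2$-invariants for the cohomology of the normalizers. The only packaging difference is in the classification step: you first classify $\op{GL}_2(\mathcal{O}_{K,S})$-classes and then count the $\op{SL}_2$-classes inside each one by the double-coset/determinant argument giving $\mathcal{O}_{K,S}^\times/\op{Nm}_1(R_{K,S,\ell}^\times)$, whereas the paper works directly with ``oriented relative ideals'' (an ideal together with a chosen generator of $\bigwedge^2_{\mathcal{O}_{K,S}}\mathfrak{a}$); the two are equivalent, and the paper's version is what makes the group structure on $\mathcal{C}_\ell$ and the Galois action on orientations explicit.

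One claim should be corrected. You write that (R1)/(R2) are needed to show that every element of order $\ell$ in $\op{SL}_2(\mathcal{O}_{K,S})$ has characteristic polynomial exactly $\Psi_\ell$. For $\ell\geq 5$ this is false (the powers $\rho^i$ with $i\neq\pm1$ of an element $\rho\in\mathcal{C}_\ell$ have trace $\zeta_\ell^i+\zeta_\ell^{-i}\neq\zeta_\ell+\zeta_\ell^{-1}$), and it is not what the regularity hypotheses are for. The correct observation, which still yields the identification of the indexing set with $\mathcal{K}_\ell=\mathcal{C}_\ell/\op{Gal}(K(\Psi_\ell)/K)$, is that every order-$\ell$ subgroup contains exactly the pair $\rho^{\pm i}$ of generators with the fixed characteristic polynomial $\Psi_\ell$. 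The hypotheses (R1)/(R2) instead guarantee that $R_{K,S,\ell}$ is a Dedekind ring (resp.\ a product $\mathcal{O}_{K,S}\times\mathcal{O}_{K,S}$), so that the relevant ideals are invertible and the classes assemble into the stated extension by $\ker\op{Nm}_0$ and $\op{coker}\op{Nm}_1$ --- precisely the ``main obstacle'' you correctly identify at the end.
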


\begin{remark}
The conditions (R1) and (R2) exclude pathologies with the singularities over  the place $\ell$ and resulting non-invertibility of ideals. For global function fields, they are automatically satisfied whenever $\ell$ is different from the characteristic. Condition (R2) is a standard assumption when dealing with the Quillen conjecture, which is one of the relevant applications of Theorem~\ref{thm:main1}.  
\end{remark}

\begin{remark}
We recover as special cases the earlier results  of Busch
\cite{busch:conjugacy}. For function fields of curves over
algebraically closed fields, there are similar formulas,
cf. \cite{sl2parabolic}.
\end{remark}

\begin{remark}
The restriction to odd torsion coefficients is necessary. Only under
this assumption, the formula has such a simple structure; for
$2$-torsion one has to take care of the possible subgroups $A_4$,
$S_4$ and $A_5$ in $\op{PSL}_2$. Moreover, for
$\mathbb{F}_2$-coefficients, there is a huge difference between the
cohomology of $\op{SL}_2$ and $\op{PSL}_2$ - only the 
cohomology of $\op{PSL}_2$ has an easy description and the computation
of cohomology of $\op{SL}_2$ depends on a lot more than just the
finite subgroups. Some results can be achieved, but the additional
complications would only obscure the presentation of the results.  
\end{remark}

The main strategy of proof is the natural one: we use Brown's formula which computes the Farrell--Tate cohomology with $\mathbb{F}_\ell$-coefficients of $\op{SL}_2(\mathcal{O}_{K,S})$ in terms of the elementary abelian $\ell$-subgroups and their normalizers. The main piece of information for the above theorem is then the classification of conjugacy classes of finite subgroups of $\op{SL}_2(\mathcal{O}_{K,S})$, which -- under the conditions (R1) or (R2) -- is directly related to number-theoretic questions about (relative) class groups and unit groups of $S$-integers in $K$ and its cyclotomic extension. Another secondary objective of the current paper is to reinterpret and generalize results on finite subgroup classification in number-theoretic terms.

The explicit formulas obtained allow to discuss a couple of questions concerning cohomology of linear groups. These applications are based on the fact that Farrell--Tate cohomology and group cohomology coincide above the virtual cohomological dimension. In particular, Theorem~\ref{thm:main1} provides a computation of group cohomology of $\op{SL}_2(\mathcal{O}_{K,S})$ above the virtual cohomological dimension. 

As a first application, the explicit formulas for the ring structure allow to discuss a conjecture of Quillen, cf. \cite{quillen:spectrum}, recalled as Conjecture~\ref{Quillen-conjecture} in Section~\ref{sec:quillen} below. 
The following result is proved in Section~\ref{sec:quillen}; 
conclusions from it with consequences for Quillen's conjecture have been drawn in~\cite{RW14}.

\begin{mainthm} 
\label{thm:main3}
With the notation of Theorem~\ref{thm:main1}, the restriction map induced from the inclusion $\op{SL}_2(\mathcal{O}_{K,S})\to \op{SL}_2(\mathbb{C})$ maps the second Chern class $c_2\in\op{H}^\bullet_{\op{cts}}(\op{SL}_2(\mathbb{C}),\mathbb{F}_\ell)$ to the sum of the elements $a_2^2$ in all the components. 

As a consequence, Quillen's conjecture for $\op{SL}_2(\mathcal{O}_{K,S})$ is true for Farrell--Tate cohomology with ${\mathbb{F}_\ell}$-coefficients for all $K$ and $S$. This also implies that Quillen's conjecture is true for group cohomology with ${\mathbb{F}_\ell}$-coefficients above the virtual cohomological dimension.  

However, if the number of conjugacy classes of order $\ell$-subgroups is greater than two, the restriction map 
$$
\op{H}^\bullet(\op{SL}_2(\mathcal{O}_{K,S}),\mathbb{F}_\ell)\to
\op{H}^\bullet(\op{T}_2(\mathcal{O}_{K,S}),\mathbb{F}_\ell)
$$
from $\op{SL}_2(\mathcal{O}_{K,S})$ to the group $\op{T}_2(\mathcal{O}_{K,S})$ of diagonal matrices is not injective. 
\end{mainthm}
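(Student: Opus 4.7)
The plan is to use the product decomposition and explicit formulas provided by Theorem~\ref{thm:main1} to handle each of the three claims in turn.

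For the image of $c_2$ in the Farrell--Tate cohomology, I would compose the projection $\widehat{\op{H}}^\bullet(\op{SL}_2(\mathcal{O}_{K,S}), \mathbb{F}_\ell) \to \widehat{\op{H}}^\bullet(N_{\op{SL}_2(\mathcal{O}_{K,S})}(\Gamma), \mathbb{F}_\ell)$ with the further restriction to a cyclic order-$\ell$ subgroup $\Gamma$. The tautological embedding $\Gamma \cong \mathbb{Z}/\ell \hookrightarrow \op{SL}_2(\mathbb{C})$ is a two-dimensional representation which decomposes as $\chi \oplus \chi^{-1}$ for some character $\chi$, and the Whitney sum formula yields $c_2|_\Gamma = c_1(\chi)\cdot c_1(\chi^{-1}) = -c_1(\chi)^2$, which equals $\pm a_2^2$ once $a_2 \in \widehat{\op{H}}^2(\Gamma, \mathbb{F}_\ell)$ is identified with a generator. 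Since by parts (3)--(4) of Theorem~\ref{thm:main1} the restriction $\widehat{\op{H}}^\bullet(N_{\op{SL}_2(\mathcal{O}_{K,S})}(\Gamma), \mathbb{F}_\ell) \to \widehat{\op{H}}^\bullet(\Gamma, \mathbb{F}_\ell)$ is injective on the polynomial subring $\mathbb{F}_\ell[a_2^2, a_2^{-2}]$, which is where the image of $c_2$ must live, this identifies the image of $c_2$ in the normalizer factor with $a_2^2$.

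For Quillen's conjecture, Theorem~\ref{thm:main1}(3)--(4) asserts that each component of the Farrell--Tate cohomology is a free module over $\mathbb{F}_\ell[a_2^2, a_2^{-2}]$, and the first part shows that $c_2$ acts in each component precisely as $a_2^2$. Combining these yields the freeness statement of Quillen's conjecture for the Farrell--Tate cohomology of $\op{SL}_2(\mathcal{O}_{K,S})$, with $c_2$ mapping to a non-zero-divisor in each factor. The corresponding statement for ordinary group cohomology above the virtual cohomological dimension then follows from the standard identification of Farrell--Tate and group cohomology in that range.

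For the non-injectivity of the restriction to $\op{T}_2(\mathcal{O}_{K,S})$, the key observation is that $\op{T}_2(\mathcal{O}_{K,S}) \cong \mathcal{O}_{K,S}^\times$ contains at most one subgroup of order $\ell$, namely the diagonal $\langle \op{diag}(\zeta_\ell, \zeta_\ell^{-1})\rangle$ when $\zeta_\ell \in \mathcal{O}_{K,S}$. By naturality of Brown's formula, the restriction map on Farrell--Tate cohomology factors through the projection onto the unique component of the product decomposition indexed by the $\op{SL}_2(\mathcal{O}_{K,S})$-conjugacy class of this diagonal subgroup, and the other components lie entirely in the kernel. If $|\mathcal{K}_\ell| > 2$, there are at least two non-trivial kernel components, which by the identification above the virtual cohomological dimension implies non-injectivity on ordinary group cohomology as well. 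The main obstacle I expect is in the first part, specifically tracking the sign conventions and pinning down the identification of the generator $a_2$ so that the image is exactly $a_2^2$ in every component simultaneously; the remaining claims are essentially formal consequences of Theorem~\ref{thm:main1}.
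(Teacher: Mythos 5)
Your parts (1) and (2) follow the paper's route: the paper likewise identifies the image of $c_2$ in each component with the second Chern class of the standard representation of the normalizer, and then invokes the freeness statements of Theorem~\ref{thm:main1}(3)--(4) (Proposition~\ref{prop:free} in the text). There is, however, a circularity in your part (1): you cannot pin down the image of $c_2$ in $\widehat{\op{H}}^4(N_{\op{SL}_2(\mathcal{O}_{K,S})}(\Gamma),\mathbb{F}_\ell)$ by restricting further to $\Gamma$ and then invoking injectivity on $\mathbb{F}_\ell[a_2^2,a_2^{-2}]$, because you must first know that the image lies in that subring --- which is exactly what is to be shown --- and the restriction to $\Gamma$ is far from injective in degree $4$ (classes such as $a_2\cdot b_1x_1$ and $b_1x_1x_2x_3$ die). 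The repair, and what the paper in effect does, is to apply the Whitney sum formula on the full centralizer $C\cong\ker\op{Nm}_1\cong\mathbb{Z}/n\times\mathbb{Z}^r$: the standard representation of $C$ is still a sum of two mutually inverse characters, $c_1$ is additive under tensor product of characters and vanishes on characters of the free part (any homomorphism $\mathbb{Z}^r\to\mathbb{C}^\times$ lifts to $\mathbb{C}$), so $c_2|_C=-c_1(\chi)^2$ lands in $\mathbb{F}_\ell\cdot a_2^2$ on the nose; the sign and unit are absorbed into the normalization of $a_2$, as you anticipate.

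For part (3) you take a genuinely different route. The paper's Proposition~\ref{prop:detect} is a pure rank count: over $\mathbb{F}_\ell[a_2^2,a_2^{-2}]$ the trivial-ideal-class component already contributes half the rank of $\widehat{\op{H}}^\bullet(\op{T}_2(\mathcal{O}_{K,S}),\mathbb{F}_\ell)$, and the remaining classes (at least two further invariant classes or a full Galois orbit) contribute at least a full rank more, so the source strictly exceeds the target in some degree and no injection can exist --- no knowledge of the restriction map is needed. You instead claim that every component other than that of the diagonal subgroup lies in the kernel. That claim is correct, and if justified it proves something stronger (non-injectivity already for $|\mathcal{K}_\ell|\geq 2$); but it is the real content of your argument and does not follow formally from the product decomposition of Brown's formula as stated. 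In particular you cannot deduce it by restricting to the unique order-$\ell$ subgroup $\Gamma_0$ of $\op{T}_2$, since $\widehat{\op{H}}^\bullet(\op{T}_2,\mathbb{F}_\ell)\to\widehat{\op{H}}^\bullet(\Gamma_0,\mathbb{F}_\ell)$ is itself non-injective. What is needed is the geometric form of the decomposition (as in the Remark following Brown's formula in Section~\ref{sec:formulas}): the $\ell$-torsion subcomplex of the symmetric space is a disjoint union of pieces indexed by $\mathcal{K}_\ell$, the $\op{T}_2$-singular locus sits inside the piece of the diagonal class, and restriction therefore annihilates the other pieces. Either supply that argument (or a double-coset analysis to the same effect), or fall back on the paper's dimension count, which is what the stronger hypothesis $|\mathcal{K}_\ell|>2$ is there for.
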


This result sheds light on the Quillen conjecture, showing that the Quillen conjecture holds in a number of rank one cases; it also sheds light on the relation between Quillen's conjecture and detection questions in group cohomology, where detection refers to injectivity of the restriction map to diagonal matrices. See \cite{RW14} for a more detailed discussion of these issues. 
It is also worthwhile pointing out that the failure of detection as in the previous theorem, i.e., the non-injectivity of the restriction map 
$$
\op{H}^\bullet(\op{SL}_2(\mathcal{O}_{K,S}),\mathbb{F}_\ell)\to
\op{H}^\bullet(\op{T}_2(\mathcal{O}_{K,S}),\mathbb{F}_\ell)
$$
implies the failure of the unstable Quillen-Lichtenbaum conjecture as formulated in \cite{anton:roberts}. This follows from the work of Dwyer and Friedlander \cite{dwyer:friedlander}. Other examples of the failure of detection in function fields situations in a higher rank case are discussed in \cite{ellnote}.

Another interesting question in group cohomology is the existence of transfers, which was discussed in \cite{knudson:book}*{Section~5.3} in the context of the Friedlander--Milnor conjecture. From the explicit computations of Theorem~\ref{thm:main1}, we can also describe the restriction maps on Farrell--Tate cohomology which  allows to find  examples for non-existence of  transfers. The following result is proved in Section~\ref{sec:transfers}:

\begin{mainthm}
\label{thm:main4}
Let $L/K$ be a finite separable extension of global fields, let $S$ be a non-empty finite set of places of $K$ containing the infinite places and let $\tilde{S}$ be a set of places of $L$ containing those lying over $S$. Let $\ell$ be an odd prime different from the characteristic of $K$.  Assume that $\mathcal{O}_{K,S}$ and $\mathcal{O}_{L,\tilde{S}}$ satisfy the conditions of Theorem~\ref{thm:main1}. 

The restriction map  
$$
\widehat{\op{H}}^\bullet(\op{SL}_2(\mathcal{O}_{L,\tilde{S}}),\mathbb{F}_\ell)\to 
\widehat{\op{H}}^\bullet(\op{SL}_2(\mathcal{O}_{K,S}),\mathbb{F}_\ell)
$$
induced from the natural ring homomorphism $\mathcal{O}_{K,S}\to\mathcal{O}_{L,\tilde{S}}$ is compatible with the decomposition of Theorem~\ref{thm:main1} and is completely described by  \begin{enumerate}
\item the induced map on class groups $\op{Pic}(R_{K,S,\ell})\to \op{Pic}(R_{L,\tilde{S},\ell})$
\item the induced map on unit groups $R_{K,S,\ell}^\times\to R_{L,\tilde{S},\ell}^\times$.
\end{enumerate}

Let $K=\mathbb{Q}(\zeta_\ell)$ be a cyclotomic field whose class group is non-trivial and has  order prime to $\ell$, e.g. $\ell=23$. Denoting by $H$ the Hilbert class field of $K$, the restriction map  
$$
\widehat{\op{H}}^\bullet(\op{SL}_2(\mathcal{O}_H[\ell^{-1}]),\mathbb{F}_\ell)\to 
\widehat{\op{H}}^\bullet(\op{SL}_2(\mathcal{O}_K[\ell^{-1}]),\mathbb{F}_\ell) 
$$
is not surjective. Therefore, this is an example of a finite \'etale morphism for which it is not possible to define a transfer in the usual K-theoretic sense. 
\end{mainthm}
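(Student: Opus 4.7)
The overall plan is to combine the Brown-type product decomposition of Theorem~\ref{thm:main1}(2) with the naturality of the group-cohomology restriction, so that the compatibility statement in part~(1) becomes a naturality statement on number-theoretic invariants, and the non-surjectivity statement in part~(2) reduces to an explicit graded-algebra comparison in a single low degree. For part~(1), Theorem~\ref{thm:main1}(2) presents $\widehat{\op{H}}^\bullet(\op{SL}_2(\mathcal{O}_{K,S}),\mathbb{F}_\ell)$ as a product of normalizer cohomologies indexed by $\mathcal{K}_\ell(\mathcal{O}_{K,S})$. A Mackey/double-coset argument for Farrell--Tate cohomology, applied to the inclusion $\op{SL}_2(\mathcal{O}_{K,S})\hookrightarrow\op{SL}_2(\mathcal{O}_{L,\tilde{S}})$, shows that restriction respects this product: the component indexed by $[\Gamma]\in\mathcal{K}_\ell(\mathcal{O}_{L,\tilde{S}})$ maps into the product of components indexed by those $[\Gamma']\in\mathcal{K}_\ell(\mathcal{O}_{K,S})$ whose $\op{SL}_2(\mathcal{O}_{L,\tilde{S}})$-conjugate is $[\Gamma]$, with the individual map being the ordinary restriction along $N_{\op{SL}_2(\mathcal{O}_{K,S})}(\Gamma')\hookrightarrow N_{\op{SL}_2(\mathcal{O}_{L,\tilde{S}})}(\Gamma')$ (after conjugating $\Gamma$ to $\Gamma'$). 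Naturality of the extension in Theorem~\ref{thm:main1}(2) identifies the induced map on $\mathcal{K}_\ell$ with the base-change map $\op{Pic}(R_{K,S,\ell})\to\op{Pic}(R_{L,\tilde{S},\ell})$, while naturality of the formulas in parts~(3)--(4) expresses the restriction between the relevant normalizer cohomologies in terms of $R_{K,S,\ell}^\times\to R_{L,\tilde{S},\ell}^\times$; these two pieces together give the description claimed in part~(1).

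For the non-surjectivity in part~(2), fix $\ell=23$, $K=\mathbb{Q}(\zeta_\ell)$ (whose class group is $\mathbb{Z}/3$, coprime to $\ell$), $S=\{\infty,(1-\zeta_\ell)\}$, and let $H$ be the Hilbert class field of $K$ with $\tilde{S}$ the places of $H$ above $S$. Since $\zeta_\ell\in K$ we are in case~(R2); $\Psi_\ell$ splits, so $R_{K,S,\ell}\cong\mathcal{O}_{K,S}\times\mathcal{O}_{K,S}$, and since $(1-\zeta_\ell)$ is principal, $\op{Pic}(\mathcal{O}_{K,S})\cong\mathbb{Z}/3$. Consequently $\op{Pic}(R_{K,S,\ell})\cong(\mathbb{Z}/3)^2$ with $\op{Nm}_0$ the product map, giving $\ker\op{Nm}_0\cong\mathbb{Z}/3$, while $\op{Nm}_1\colon(u,v)\mapsto uv$ is surjective and $\op{coker}\op{Nm}_1=0$. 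By Theorem~\ref{thm:main1}(2), $|\mathcal{C}_\ell(\mathcal{O}_{K,S})|=3$; since the Galois involution acts by inversion on this $\mathbb{Z}/3$ and has one fixed point, $|\mathcal{K}_\ell(\mathcal{O}_{K,S})|=2$, with exactly one Galois-invariant and one Galois-non-invariant class. By the principal ideal theorem the induced map $\op{Pic}(\mathcal{O}_{K,S})\to\op{Pic}(\mathcal{O}_{H,\tilde{S}})$ is zero; by the description in part~(1) this forces both $K$-classes to map to the unique Galois-invariant trivial class in $\mathcal{K}_\ell(\mathcal{O}_{H,\tilde{S}})$, and any other $H$-class restricts to zero.

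The non-surjectivity now follows from the explicit structure in Theorem~\ref{thm:main1}(3)--(4). The relevant $H$-side component is Galois-invariant, so its cohomology has the form $(\mathbb{F}_\ell[a_2,a_2^{-1}]\otimes\bigwedge(\ker\op{Nm}_1^H))^{\mathbb{Z}/2}$, in which $a_2$ itself is absent since the Galois involution acts by $-1$ on $a_2$. On the $K$-side, the Galois-non-invariant component has cohomology $\mathbb{F}_\ell[a_2,a_2^{-1}]\otimes\bigwedge(\ker\op{Nm}_1^K)$ which contains $a_2$ as a nontrivial degree-two class. Since restriction is graded and compatible with the product decomposition (by part~(1)), every degree-two class in the image projects to the non-invariant component as a sum of terms of the form $a_2^{2k}\otimes\omega$ with $\omega\in\bigwedge^{2-4k}(\ker\op{Nm}_1^K)$, i.e.\ never involves an odd power of $a_2$. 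Thus the element $(0,a_2)\in\widehat{\op{H}}^2(\op{SL}_2(\mathcal{O}_K[\ell^{-1}]),\mathbb{F}_\ell)$, taken to be zero in the invariant component and equal to the generator $a_2$ in the non-invariant component, cannot be in the image, and restriction is non-surjective. The transfer consequence is then immediate: if a K-theoretic transfer $\tau$ existed, then $\op{res}\circ\tau=[H:K]\cdot\op{id}=3\cdot\op{id}$ would be a unit in $\mathbb{F}_{23}$, forcing restriction to be split surjective, contradicting the above. The main obstacle in the plan is the careful bookkeeping of part~(1): one must verify that the abstract Mackey decomposition of restriction in Farrell--Tate cohomology corresponds precisely to the arithmetic base-change maps on class and unit groups occurring in Theorem~\ref{thm:main1}; once this is in place, the rest of the argument reduces to the single-degree algebraic comparison above.
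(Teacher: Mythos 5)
Your proposal is correct and follows essentially the same route as the paper: the compatibility statement in part (1) is the content of Proposition~\ref{8.1}, which the paper likewise obtains from the naturality of Brown's decomposition together with the class-group/unit-group identifications of Theorem~\ref{thm:main1}, and your example is the paper's own Example~\ref{ex:q23} ($K=\mathbb{Q}(\zeta_{23})$, $H$ its Hilbert class field, capitulation via the principal ideal theorem). The one place you diverge is the final non-surjectivity step. The paper (Theorem~\ref{thm:notr}) argues via mechanism (C\,1): the two $K$-components are hit from a single $H$-component, so the image of restriction lies in a ``diagonal subring''. You instead exploit mechanism (C\,2): the non-Galois-invariant $K$-class becomes invariant over $H$, so the restriction into that component factors through the $\mathbb{Z}/2$-invariants and therefore misses $a_2$. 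Your version is arguably the more airtight of the two, since it exhibits an explicit degree-two class outside the image rather than appealing to a diagonal-image count; the paper itself records in the remark following Example~\ref{ex:q23} that situation (C\,2) also obstructs transfers. One small caveat: your assertion that every degree-two class in the image is a sum of monomials $a_2^{2k}\otimes\omega$ tacitly assumes that the restriction $\op{H}^\bullet(\ker\op{Nm}_1^H,\mathbb{F}_\ell)\to\op{H}^\bullet(\ker\op{Nm}_1^K,\mathbb{F}_\ell)$ respects the bigrading by $a_2$-power and exterior degree, which requires a word of justification (a priori $a_2$ could restrict to $a_2$ plus decomposables); the cleaner formulation is simply that this restriction is $\mathbb{Z}/2$-equivariant, so the image of the invariant source lands in the invariants of the target, and $a_2$ is anti-invariant.
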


Finally, we want to note that the precise description of Farrell--Tate cohomology and the relevant restriction maps allows to consider the colimit of the groups $\widehat{\op{H}}^\bullet(\op{SL}_2(\mathcal{O}_{K,S}),\mathbb{F}_\ell)$, where $S$ runs through the finite sets of places of $K$. Using this, we investigate in Section~\ref{sec:borel} the behaviour of Mislin's extension of Farrell--Tate cohomology with respect to directed colimits: 

\begin{mainthm} ${}$
\label{thm:main5}
\begin{enumerate} 
\item There are cases where Mislin's extension of Farrell--Tate cohomology does not commute with directed colimits. A simple example is given by the directed system of groups $\op{SL}_2(\mathbb{Z}[1/n])$ with $n\in \mathbb{N}$. 
\item The Friedlander--Milnor conjecture for $\op{SL}_2(\overline{\mathbb{Q}})$ is equivalent to the question if Mislin's extension of Farrell--Tate cohomology commutes with the directed colimit of $\op{SL}_2(\mathcal{O}_{K,S})$, where $K$ runs through all number fields and $S$ runs through all finite sets of places.
\end{enumerate}
\end{mainthm}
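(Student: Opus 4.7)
For both parts, the strategy is to apply Theorem~\ref{thm:main1} term-by-term in the relevant directed system and Theorem~\ref{thm:main4} to control the restriction maps, then compare the result with a Brown-type description of Mislin's extension of the colimit group. For part (1), I would take $\ell=3$, where $\zeta_3+\zeta_3^{-1}=-1\in\mathbb{Z}$ and condition (R1) holds automatically, so Theorem~\ref{thm:main1} applies to every $\op{SL}_2(\mathbb{Z}[1/n])$ in the directed system whose group-theoretic colimit is $\op{SL}_2(\mathbb{Q})$. Each $\widehat{\op{H}}^\bullet(\op{SL}_2(\mathbb{Z}[1/n]),\mathbb{F}_3)$ decomposes as a finite direct product over $\mathcal{K}_3$ with nontrivial factors. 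A direct $S$-unit computation shows that each inert prime $p\equiv 2\pmod 3$, once inverted, contributes a new copy of $\mathbb{Z}/2$ in $\op{coker}\op{Nm}_1$: the uniformizer $p$ of the unique prime of $\mathbb{Z}[\zeta_3]$ above $p$ becomes a new $S$-unit whose norm $p^2$ lies in the image of $\op{Nm}_1$ but $p$ itself does not. By Dirichlet there are infinitely many such primes, so $|\mathcal{K}_3|$ is unbounded and the directed colimit is indexed by a countably infinite set $\mathcal{K}_{3,\infty}$. Theorem~\ref{thm:main4} ensures the restriction maps respect the product decomposition, so the directed colimit of the rings is a countable direct sum in each degree. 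On the other hand, a Brown-type formula for Mislin's extension---using that every finite subgroup of $\op{SL}_2(\mathbb{Q})$ lies in some $\op{SL}_2(\mathbb{Z}[1/n])$---describes $\widehat{\op{H}}^\bullet(\op{SL}_2(\mathbb{Q}),\mathbb{F}_3)$ as the direct \emph{product} over the same set $\mathcal{K}_{3,\infty}$. Since each factor is nontrivial by parts (3)--(4) of Theorem~\ref{thm:main1}, the sum is a proper subgroup of the product.

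For part (2), the colimit $\op{SL}_2(\overline{\mathbb{Q}})=\op{colim}_{K,S}\op{SL}_2(\mathcal{O}_{K,S})$ collapses the finite subgroup structure: any two order-$\ell$ elements are diagonalizable and hence conjugate in $\op{SL}_2(\overline{\mathbb{Q}})$, producing a single conjugacy class of order-$\ell$ subgroup with normalizer isomorphic to $\overline{\mathbb{Q}}^\times\rtimes\mathbb{Z}/2$. A Brown-type formula then identifies $\widehat{\op{H}}^\bullet(\op{SL}_2(\overline{\mathbb{Q}}),\mathbb{F}_\ell)$ with the cohomology of this single normalizer, which via Theorem~\ref{thm:main3} (recognizing $a_2^2$ as the restriction of the second Chern class) matches the ring $\mathbb{F}_\ell[c_2]$ predicted by the étale realization of $B\op{SL}_2$. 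Meanwhile, the compatibilities of Theorem~\ref{thm:main4} let one identify $\op{colim}_{K,S}\widehat{\op{H}}^\bullet(\op{SL}_2(\mathcal{O}_{K,S}),\mathbb{F}_\ell)$ with the colimit of group cohomology in the vcd-range of each stage and, ultimately, with $\op{H}^\bullet(\op{SL}_2(\overline{\mathbb{Q}}),\mathbb{F}_\ell)$. Consequently the commutation statement becomes equivalent to the Friedlander--Milnor assertion $\op{H}^\bullet(\op{SL}_2(\overline{\mathbb{Q}}),\mathbb{F}_\ell)\cong\mathbb{F}_\ell[c_2]$.

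The main obstacle will be a rigorous Brown-type description of Mislin's extension for infinite-vcd groups like $\op{SL}_2(\mathbb{Q})$ and $\op{SL}_2(\overline{\mathbb{Q}})$; this should follow from a spectral sequence over the poset of finite $\ell$-subgroups, but care is needed to distinguish direct sums from direct products (the distinction on which part (1) hinges) and to verify that the vcd-range identification in part (2) passes to the colimit as claimed. The number-theoretic inputs---the growth of $\mathcal{K}_3$ via inert primes and the classification of order-$\ell$ subgroups of $\op{SL}_2(\overline{\mathbb{Q}})$---are routine once the formalism is set up.
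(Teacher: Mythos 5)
Your argument hinges on ``a Brown-type description of Mislin's extension'' for the infinite-vcd groups $\op{SL}_2(\mathbb{Q})$ and $\op{SL}_2(\overline{\mathbb{Q}})$, which you flag as the main obstacle but expect to follow from a spectral sequence over finite $\ell$-subgroups. This is not a technical gap that can be filled: the posited formula is false, and the paper's proof rests on the opposite fact. The key input you are missing is Mislin's theorem (\cite{mislin:tate}, Theorem~3.2, quoted in Section~\ref{sec:borel}): for $\op{SL}_2$ of a number field, and likewise of $\overline{\mathbb{Q}}$, Mislin's Tate cohomology coincides with \emph{ordinary group cohomology} --- the completion does nothing. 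In particular $\widehat{\op{H}}^1(\op{SL}_2(\mathbb{Q}),\mathbb{F}_3)\cong\op{H}^1(\op{SL}_2(\mathbb{Q}),\mathbb{F}_3)=\op{Hom}(\op{SL}_2(\mathbb{Q}),\mathbb{F}_3)=0$ because $\op{SL}_2(\mathbb{Q})$ is perfect, whereas any product over normalizers of order-$3$ subgroups would be nonzero in degree $1$. The paper's proof of part~(1) is exactly this comparison: by Theorem~\ref{thm:limit} the limit of the Farrell--Tate groups contains $\op{Hom}_{\mathbb{F}_3}(\mathbb{Q}^\times/(\mathbb{Q}^\times)^3,\mathbb{F}_3)\neq 0$ in degree $1$, while Mislin's Tate cohomology of the colimit group vanishes there. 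The discrepancy is ``nonzero versus zero'', not the ``direct sum versus direct product over the same index set'' on which you build the contradiction. (Your number-theoretic observation that inert primes $p\equiv 2\bmod 3$ make $\op{coker}\op{Nm}_1$ grow is correct and is implicitly part of why the limit is large, but it is not where the failure of commutation comes from; note also that the comparison is with an inverse limit of the cohomology groups along restriction maps, not a colimit.)

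For part~(2) the same problem is fatal in a different way: if Mislin's extension of $\op{SL}_2(\overline{\mathbb{Q}})$ were unconditionally computed by a Brown-type formula as the cohomology of the single normalizer $\overline{\mathbb{Q}}^\times\rtimes\mathbb{Z}/2$, then combining this with Mislin's identification of his Tate cohomology with group cohomology would already \emph{prove} the Friedlander--Milnor conjecture for $\op{SL}_2(\overline{\mathbb{Q}})$, rather than establish an equivalence; your argument is therefore circular. The correct route is: (a) compute the colimit of the Farrell--Tate groups over all $(K,S)$ via Theorem~\ref{thm:limit} and Proposition~\ref{8.1}, obtaining $\op{H}^\bullet(\op{N}(\overline{\mathbb{Q}}),\mathbb{F}_\ell)[(a_2^2)^{-1}]$ (Corollary~\ref{cor:sl2qbar}), and check this agrees with the \'etale-cohomology prediction $\mathbb{F}_\ell[c_2]$ using that $\overline{\mathbb{Q}}^\times/\mu_{\ell^\infty}$ is uniquely $\ell$-divisible; (b) invoke Mislin's theorem to identify the other side of the commutation question with $\op{H}^\bullet(\op{SL}_2(\overline{\mathbb{Q}}),\mathbb{F}_\ell)$. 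Commutation with the colimit is then literally the statement $\op{H}^\bullet(\op{SL}_2(\overline{\mathbb{Q}}),\mathbb{F}_\ell)\cong\mathbb{F}_\ell[c_2]$, i.e.\ the Friedlander--Milnor conjecture.
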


Results similar to the ones in the present paper can be obtained for higher rank groups, although there are more and more complications coming from the classification of finite subgroups. Away from the order of the Weyl group, the subgroup classification is easier. A discussion of the case $\op{SL}_3$ will be done in the forthcoming paper~\cite{sl3ok}. 

\emph{Structure of the paper:} We first recall group cohomology preliminaries in Section~\ref{sec:pregrp}. The proof of the main theorem, modulo the conjugacy classification, is given in Section~\ref{sec:formulas}. 
Sections~\ref{sec:conjugacy1} and \ref{sec:conjugacy2} establish the conjugacy classification of finite cyclic subgroups in $\op{SL}_2(\mathcal{O}_{K,S})$. Then we discuss three applications of the results, to (I) non-detection in Section~\ref{sec:quillen}, (II) the existence of transfers in Section~\ref{sec:transfers} and (III) cohomology of SL$_2$ over number fields in Section~\ref{sec:borel}.  

\emph{Acknowledgements:} This work was started in August 2012 during a
stay of the second named author at the De Br\'un Center for
Computational Algebra at NUI Galway. We would like thank Guido Mislin
for email correspondence concerning his version of Tate cohomology,
and Jo\"el Bella\"iche for an enlightening MathOverflow answer
concerning group actions on Bruhat--Tits trees. We thank Norbert Kr\"amer and an anonymous referee for very helpful comments on a previous version of this paper.

\section{Preliminaries on group cohomology and Farrell-Tate
  cohomology} 
\label{sec:pregrp}

In this section, we recall the necessary definitions of group cohomology and Farrell--Tate cohomology, introducing notations and results we will need in the sequel. One of the basic references is \cite{brown:book}. 

\subsection{Group cohomology}

Group cohomology is defined as the right derived functor of invariants $\mathbb{Z}[G]\op{-mod}\to \mathbb{Z}\op{-mod}:M\mapsto M^G$. It can be defined algebraically by taking a resolution $P_\bullet\to \mathbb{Z}$ of $\mathbb{Z}$ by projective $\mathbb{Z}[G]$-modules and setting 
$$
\op{H}^\bullet(G,M):=\op{H}^\bullet(\op{Hom}_G(P_\bullet,M)).
$$
Alternatively, it can be defined topologically as the cohomology of the classifying space $BG$ with coefficients in the local system associated to $M$.

\subsection{Farrell--Tate cohomology}

We shortly recall the definition and  properties of Farrell--Tate cohomology, cf. \cite{brown:book}*{chapter X}. Farrell--Tate cohomology is a completion of group cohomology defined for groups of finite virtual cohomological dimension (vcd). Note that for $K$ a number field, the groups $\op{SL}_2(\mathcal{O}_{K,S})$  have virtual cohomological dimension $2r_1+3r_2+\# S_f-1$, where $r_1$ and 2$r_2$ are the numbers of real and complex embeddings of $K$, respectively, and $\# S_f$ is the number of finite places in $S$.

For $\Gamma$ a group with finite virtual cohomological dimension, a
complete resolution is the datum of 
\begin{itemize}
\item an acyclic chain complex $F_\bullet$ of projective $\mathbb{Z}[\Gamma]$-modules, 
\item a projective resolution $\epsilon:P_\bullet\to\mathbb{Z}$ of $\mathbb{Z}$ over $\mathbb{Z}[\Gamma]$, and
\item a chain map $\tau:F_\bullet\to P_\bullet$ which is the identity in sufficiently high dimensions.
\end{itemize}

The starting point of Farrell--Tate cohomology is the fact that groups of finite virtual cohomological dimensions have complete resolutions. 

\begin{definition}
Given a group $\Gamma$ of finite virtual cohomological dimension, a complete resolution $(F_\bullet, P_\bullet, \epsilon)$ and a $\mathbb{Z}[\Gamma]$-module $M$, \emph{Farrell--Tate cohomology of $\Gamma$ with coefficients in $M$} is defined by 
$$
\widehat{\op{H}}^\bullet(\Gamma,M):=
\op{H}^\bullet(\op{Hom}_\Gamma(F_\bullet, M)).
$$
\end{definition}

The functors $\widehat{\op{H}}^\bullet$ satisfy all the usual cohomological properties, cf. \cite{brown:book}*{X.3.2}, and in fact Farrell--Tate cohomology can be seen as projective completion of group cohomology, cf. \cite{mislin:tate}. Important for our considerations is the fact \cite{brown:book}*{X.3.4} that there is a canonical map 
$$
\op{H}^\bullet(\Gamma,M)\to \widehat{\op{H}}^\bullet(\Gamma,M)
$$
which is an isomorphism above the virtual cohomological dimension.

\subsection{Finite subgroups in $\op{PSL}_2(K)$}
We first recall the classification of finite subgroups of $\op{PSL}_2(K)$, $K$ any field. \emph{We  implicitly assume that the order of the finite subgroup is prime to the  characteristic of $K$.} The classification over algebraically closed fields is due to Klein.  In the (slightly different) case of $\op{PGL}_2(K)$, $K$ a general field, the classification of finite subgroups can be found in \cite{beauville}.  

For $K$ an algebraically closed field, in particular for $K=\mathbb{C}$, Klein's classification provides an exact list of isomorphism types (as well as conjugacy classes) of finite subgroups in $\op{PSL}_2(K)$: any finite subgroup of $\op{PSL}_2(\mathbb{C})$ is isomorphic to a cyclic group $\mathbb{Z}/n\mathbb{Z}$, a dihedral group $D_{2n}$, the tetrahedral group $A_4$, the octahedral group $S_4$ or the icosahedral group $A_5$. 

Over an arbitrary field $K$, we have the following classification, cf. \cite{serre:finite}*{2.5}, \cite{kraemer:diplom}*{Satz 13.3} or \cite{beauville}*{proposition 1.1 and theorem 4.2}. Denote by $\zeta_n$ some primitive $n$-th root of unity.

\begin{proposition}
\label{prop:psl2finite}
\begin{enumerate}[(i)]
\item $\op{PSL}_2(K)$ contains a cyclic group $\mathbb{Z}/n\mathbb{Z}$ if and only if $\zeta_{2n}+\zeta_{2n}^{-1}\in K$. 
\item 
$\op{PSL}_2(K)$ contains a dihedral group $D_{2n}$ if additionally the symbol $\left((\zeta_{2n}-\zeta_{2n}^{-1})^2,-1\right)$ is split.
\item $\op{PSL}_2(K)$ contains $A_4$ if and only if $-1$ is a sum of two squares,  i.e., if the symbol $(-1,-1)$ is split. 
\item 
$\op{PSL}_2(K)$ contains $S_4$ if and only if  $\sqrt{2}\in K$ and $-1$ is a sum of two squares. 
\item $\op{PSL}_2(K)$ contains $A_5$ if and only if $\sqrt{5}\in K$ and $-1$ is a sum of two squares.
\end{enumerate}
\end{proposition}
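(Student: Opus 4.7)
The plan is to reduce each existence question to a concrete linear-algebra or quaternion-algebra criterion, treating the cyclic case (i) as the foundation and then building dihedral, tetrahedral, octahedral, and icosahedral subgroups on top of it.

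First I would handle (i) by lifting along $\op{SL}_2(K)\twoheadrightarrow \op{PSL}_2(K)$. An order-$n$ element $\overline{g}\in\op{PSL}_2(K)$ lifts to some $g\in\op{SL}_2(K)$ with $g^n=\pm I$, so $g$ has order dividing $2n$; its eigenvalues are a pair $\lambda,\lambda^{-1}$ of $2n$-th roots of unity whose sum $\op{tr}(g)=\lambda+\lambda^{-1}$ lies in $K$. A short case analysis on whether $\overline{g}$ lifts to an element of order $n$ or $2n$ shows that $\op{tr}(g)=\zeta_{2n}+\zeta_{2n}^{-1}$ is forced. Conversely, from any $t=\zeta_{2n}+\zeta_{2n}^{-1}\in K$ the companion matrix of $T^2-tT+1$ lies in $\op{SL}_2(K)$ and projects to an order-$n$ element of $\op{PSL}_2(K)$.

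Next I would treat (iii), (iv), (v) using the standard quaternion-algebra model. A Klein four subgroup $V_4\subset\op{PSL}_2(K)$ lifts to a copy of the quaternion group $Q_8$ inside $\op{SL}_2(K)$, which generates a $K$-subalgebra of $M_2(K)$ isomorphic to the Hamilton quaternion algebra $(-1,-1)_K$; this embedding exists exactly when $(-1,-1)_K$ splits, equivalently when $-1$ is a sum of two squares in $K$. Once $V_4$ is available, adjoining a $3$-cycle produces $A_4=V_4\rtimes\mathbb{Z}/3$; by (i) this only requires $\zeta_6+\zeta_6^{-1}=1\in K$, which is automatic, giving (iii). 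Passing to $S_4$ requires in addition an element of order $4$ in $\op{PSL}_2(K)$, hence by (i) the condition $\zeta_8+\zeta_8^{-1}=\sqrt{2}\in K$, giving (iv); passing to $A_5$ requires an element of order $5$, hence $\zeta_{10}+\zeta_{10}^{-1}=(1+\sqrt{5})/2\in K$, i.e.\ $\sqrt{5}\in K$, giving (v). Conversely, assuming both the $V_4$-condition and the relevant root, one checks that the abstract presentations of $S_4$ and $A_5$ can be realized inside the appropriate quaternion algebra.

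For (ii), a dihedral group $D_{2n}$ consists of a cyclic subgroup $C_n$ (covered by (i)) together with an involution $\sigma\in\op{PSL}_2(K)$ conjugating a generator to its inverse. Lifting to $\op{SL}_2(K)$ and writing the anti-commutation relation in a basis diagonalizing the order-$n$ element, the existence of such $\sigma$ becomes the question of whether a certain binary quadratic form over $K$, essentially $x^2+y^2$ twisted by $(\zeta_{2n}-\zeta_{2n}^{-1})^2$, represents zero nontrivially; this is exactly the statement that the Hilbert symbol $\bigl((\zeta_{2n}-\zeta_{2n}^{-1})^2,-1\bigr)$ is split. I would verify both directions by explicit construction of the involution from a nontrivial isotropic vector.

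The hardest part is the direction from the subgroup's existence to the splitting/root condition, where the naive argument only produces elements in an algebraic closure; the point is that in each case the invariants one extracts (trace, discriminant of the invariant quadratic form, Hilbert symbol of the generated quaternion algebra) lie in $K$. I expect (ii) and the converse direction of (iv)/(v) to require the most care, since one must certify not merely the presence of the correct cyclic and Klein-four generators but the compatibility of their relations over $K$; here I would rely on the quaternion-algebra description to package the constraints cleanly, referring to the presentations in \cite{serre:finite} or \cite{beauville} for the bookkeeping.
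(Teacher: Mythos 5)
Your proposal is correct and is in substance the argument the paper itself points to: the paper dispatches this proposition with the remark that one should examine the Wedderburn decomposition of $K[G]$ (for the binary cover of the subgroup in question) and look for determinant-one two-dimensional factors, and the subalgebra of $\op{M}_2(K)$ generated by your lift --- the quadratic \'etale algebra $K[T]/(T^2-(\zeta_{2n}+\zeta_{2n}^{-1})T+1)$ in the cyclic case, the quaternion algebras $(-1,-1)_K$ and $\left((\zeta_{2n}-\zeta_{2n}^{-1})^2,-1\right)_K$ in the others --- is exactly that Wedderburn factor, so your splitting criteria are the paper's criterion made explicit. The only points deserving the extra care you already flag are that for odd $n$ a lift may have order $n$ rather than $2n$ (replace $g$ by $-g$ to recover the trace $\zeta_{2n}+\zeta_{2n}^{-1}$), and that for $A_4$, $S_4$, $A_5$ one should actually exhibit the binary polyhedral groups $2T$, $2O$, $2I$ as norm-one units of the split quaternion algebra, which the standard Hurwitz/icosian unit constructions provide.
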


This result is best proved by considering the Wedderburn decomposition of $K[G]$ and checking for (determinant one) two-dimensional representations among the factors. 

With the exception of $\mathbb{Z}/2\mathbb{Z}$ and the dihedral groups, all finite subgroups in $\op{PGL}_2(K)$ are conjugate whenever they are isomorphic. For the dihedral groups $D_r$, there is a bijection between $\op{PGL}_2(K)$-conjugacy classes and $K^\times/((K^\times)^2\cdot\mu_r(K))$ if $\zeta_r\in K$, cf. \cite{beauville}*{theorem 4.2}.

\subsection{Farrell--Tate cohomology of finite  subgroups of $\op{SL}_2(\mathbb{C})$} 

We recall the well-known formulas for group and Tate cohomology of cyclic and dihedral groups. We restrict to the cohomology with odd torsion coefficients, as our main results only use that case. The formulas below as well as corresponding formulas for the cohomology with $\mathbb{F}_2$-coefficients can be found in \cite{adem:milgram}.
Here, classes in square brackets are polynomial generators and classes in parentheses are exterior classes; the index of a class specifies its degree in the graded $\mathbb{F}_\ell$-algebra.

\begin{itemize}
\item 
The cohomology ring for a cyclic group of order $n$ with $\ell\mid n$ and $\ell$ odd is given by the formula 
$$
\op{H}^\bullet(\mathbb{Z}/n\mathbb{Z},\mathbb{F}_\ell)\cong
\mathbb{F}_\ell[a_2](b_1). 
$$
The corresponding Tate cohomology ring is $\widehat{\op{H}}^\bullet(\mathbb{Z}/n\mathbb{Z},\mathbb{F}_\ell)\cong \mathbb{F}_\ell[a_2,a_2^{-1}](b_1)$.
Note that cohomology with integral coefficients gets rid of the exterior algebra contribution which come from the universal coefficient formula. 
\item
The cohomology ring for a dihedral group of order $2n$ with $\ell\mid n$ and $\ell$ odd is given by the formula 
$$
\op{H}^\bullet(D_{2n},\mathbb{F}_\ell)\cong
\mathbb{F}_\ell[a_4](b_3). 
$$
The corresponding Tate cohomology ring is $\widehat{\op{H}}^\bullet(D_{2n},\mathbb{F}_\ell)\cong \mathbb{F}_\ell[a_4,a_4^{-1}](b_3)$.
\item The inclusions $D_6\hookrightarrow A_4$, $D_6\hookrightarrow S_4$ and $D_6\hookrightarrow A_5$ induce isomorphisms in group cohomology with $\mathbb{F}_3$-coefficients.
\item The subgroup inclusion $D_{10}\to A_5$ induces an isomorphism in group cohomology with $\mathbb{F}_5$-coefficients. 
\end{itemize}

\section{Computation of Farrell--Tate cohomology}
\label{sec:formulas}

\subsection{Brown's formula}
We now outline the proof of the main result, Theorem~\ref{thm:main1}. The essential tool is Brown's formula. For $\ell$ an odd prime, any elementary abelian $\ell$-subgroup of $\op{SL}_2(\mathcal{O}_{K,S})$ is in fact cyclic. This implies that Brown's complex of elementary abelian $\ell$-subgroups is a disjoint union of the conjugacy classes of cyclic $\ell$-subgroups of $\op{SL}_2(\mathcal{O}_{K,S})$. 

By Brown's formula for Farrell--Tate cohomology, cf. \cite{brown:book}*{corollary X.7.4}, we have 
$$
\widehat{\op{H}}^\bullet(\op{SL}_2(\mathcal{O}_{K,S}),\mathbb{F}_\ell)\cong
\prod_{[\Gamma\leq \op{SL}_2], \medspace \Gamma \textrm{ cyclic}}
\widehat{\op{H}}^\bullet(C_{\op{SL}_2}(\Gamma),
\mathbb{F}_\ell)^{N_{\op{SL}_2}(\Gamma)/C_{\op{SL}_2}(\Gamma)},
$$
where the sum on the right is indexed by conjugacy classes of finite
cyclic subgroups $\Gamma$ in $\op{SL}_2(\mathcal{O}_{K,S})$. 

\begin{remark}
Alternatively, the Farrell--Tate cohomology of $\op{SL}_2(\mathcal{O}_{K,S})$ can be computed using the isotropy spectral sequence for the action of $\op{SL}_2(\mathcal{O}_{K,S})$ on the associated symmetric space $\mathfrak{X}_{K,S}$. It is possible to show that the spectral sequence provides the same direct sum decomposition of Farrell--Tate cohomology as Brown's formula above, because the $\ell$-torsion subcomplex of $\mathfrak{X}_{K,S}$ is a disjoint union of classifying spaces for proper actions of the normalizers of cyclic $\ell$-subgroup of $\op{SL}_2(\mathcal{O}_{K,S})$.  
\end{remark}

\subsection{Cyclic subgrups, their centralizers and normalizers}
To prove Theorem~\ref{thm:main1}, we need to describe the conjugacy classes of finite cyclic $\ell$-subgroups $\Gamma$ in $\op{SL}_2(\mathcal{O}_{K,S})$ and compute their centralizers and normalizers. This is done in Sections~\ref{sec:conjugacy1} and \ref{sec:conjugacy2}. 

The classification proceeds by first setting up a bijection between conjugacy classes of elements of order $\ell$ with characteristic polynomial $T^2-(\zeta_\ell+\zeta_\ell^{-1})T+1$ and classes of ``oriented relative ideals'', cf. Definition~\ref{def:oriented}. This bijection is essentially based on writing out representing matrices for multiplication with $\zeta_\ell$ on suitable $\mathcal{O}_{K,S}$-lattices in $K^2\cong K(\zeta_\ell)$, cf. Proposition~\ref{prop:biject}, and mostly follows classical arguments as in \cite{latimer:macduffee}. Under suitable regularity assumptions, namely (R1) and (R2) in Theorem~\ref{thm:main1}, the set of oriented relative ideals can then be described in terms of relative class groups and the cokernel of norm maps on unit groups, cf. Section~\ref{sec:conjugacy1} for more details. 

Centralizers and normalizers of subgroups generated by elements of finite order can then also be determined very precisely. The main point is that the theory of algebraic groups puts strict constraints on the possible shape of centralizers and normalizers for arithmetic subgroups of algebraic groups: the centralizers turn out to be groups of norm-one units, and the normalizers are either equal to the centralizers or $\mathbb{Z}/2\mathbb{Z}$-extensions thereof. The precise results are proved in Section~\ref{sec:conjugacy2}. The main result formulating the classification of finite cyclic subgroups in $\op{SL}_2(\mathcal{O}_{K,S})$ is Theorem~\ref{thm:conjsl2}. 

Having the description of subgroups of order $\ell$ and their centralizers and normalizers, Parts (i) and (ii) of Theorem~\ref{thm:main1} follow directly from Theorem~\ref{thm:conjsl2} and Brown's formula mentioned above.

\subsection{Farrell--Tate cohomology of centralizers and normalizers}
The description of the relevant normalizers for parts (iii) and (iv) of
Theorem~\ref{thm:main1} can also be found in Theorem~\ref{thm:conjsl2}
resp. Proposition~\ref{prop:aut}. To prove the cohomology statements in parts (iii) and (iv) of Theorem~\ref{thm:main1}, we provide the computation of  the Farrell--Tate cohomology of the normalizers in the two propositions below.

First, we consider the case of cyclic subgroups whose conjugacy class is not Galois-invariant. In this case, the normalizer equals the centralizer and is isomorphic to the kernel of the norm map on units. As a group, the normalizer is then of the form $\mathbb{Z}/n\mathbb{Z}\times\mathbb{Z}^r$ for suitable $n$ and $r$. The computation of the relevant Farrell--Tate cohomology is straightforward:

\begin{proposition} \label{6.1}
Let $A=\mathbb{Z}/n\mathbb{Z}\times \mathbb{Z}^r$, and let $\ell$ be an odd prime with $\ell\mid n$. Then, with $b_1,x_1,\dots,x_r$ denoting classes in degree $1$ and $a_2$ a class of degree $2$, we have 
$$
\widehat{\op{H}}^{\bullet}(A,\mathbb{F}_\ell)\cong \widehat{\op{H}}^\bullet(\mathbb{Z}/n\mathbb{Z},\mathbb{F}_\ell) \otimes_{\mathbb{F}_\ell}\bigwedge^\bullet \mathbb{F}_\ell^r\cong \mathbb{F}_\ell[a_2,a_2^{-1}](b_1,x_1,\dots,x_r).
$$
\end{proposition}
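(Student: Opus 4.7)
The plan is to reduce the computation to a Künneth decomposition matching the product structure $A=\mathbb{Z}/n\mathbb{Z}\times\mathbb{Z}^r$, and then substitute the standard cohomology rings already recalled in Section~\ref{sec:pregrp}. Concretely, I would construct a complete resolution for $A$ as the external tensor product of a complete resolution for the torsion factor and a finite projective resolution for the torsion-free factor.

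First, fix a (2-periodic) complete resolution $C_\bullet$ of $\mathbb{Z}$ over $\mathbb{Z}[\mathbb{Z}/n\mathbb{Z}]$ coming from the periodicity of the cyclic group, and a finite free resolution $P_\bullet$ of $\mathbb{Z}$ over $\mathbb{Z}[\mathbb{Z}^r]$ of length $r$ -- for instance the Koszul-type resolution built from a choice of generators of $\mathbb{Z}^r$. The external tensor product $C_\bullet\otimes_{\mathbb{Z}} P_\bullet$ is then a complex of projective $\mathbb{Z}[A]$-modules (tensor products of projectives over the two factor group rings), and it is acyclic because $C_\bullet$ is acyclic while $P_\bullet$ is a bounded complex of projectives. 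Moreover, if $P'_\bullet\to\mathbb{Z}$ is a projective resolution over $\mathbb{Z}[\mathbb{Z}/n\mathbb{Z}]$ together with a chain map $\tau\colon C_\bullet\to P'_\bullet$ that is the identity in high degree, then $P'_\bullet\otimes P_\bullet\to\mathbb{Z}$ is a projective resolution over $\mathbb{Z}[A]$ and $\tau\otimes\op{id}$ is the identity in sufficiently high degree. This exhibits $C_\bullet\otimes P_\bullet$ as a complete resolution for $A$ in the sense of Section~\ref{sec:pregrp}.

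Next, one identifies
$$\op{Hom}_A(C_\bullet\otimes P_\bullet,\mathbb{F}_\ell)\cong\op{Hom}_{\mathbb{Z}/n\mathbb{Z}}(C_\bullet,\mathbb{F}_\ell)\otimes_{\mathbb{F}_\ell}\op{Hom}_{\mathbb{Z}^r}(P_\bullet,\mathbb{F}_\ell)$$
as complexes of $\mathbb{F}_\ell$-vector spaces (since $A$ acts diagonally and the coefficients are trivial), and applies the Künneth theorem over the field $\mathbb{F}_\ell$ to conclude
$$\widehat{\op{H}}^\bullet(A,\mathbb{F}_\ell)\cong\widehat{\op{H}}^\bullet(\mathbb{Z}/n\mathbb{Z},\mathbb{F}_\ell)\otimes_{\mathbb{F}_\ell}\op{H}^\bullet(\mathbb{Z}^r,\mathbb{F}_\ell).$$
Substituting $\widehat{\op{H}}^\bullet(\mathbb{Z}/n\mathbb{Z},\mathbb{F}_\ell)=\mathbb{F}_\ell[a_2,a_2^{-1}](b_1)$ (valid since $\ell\mid n$ and $\ell$ is odd, as recalled in Section~\ref{sec:pregrp}) and the standard computation $\op{H}^\bullet(\mathbb{Z}^r,\mathbb{F}_\ell)\cong\bigwedge^\bullet\mathbb{F}_\ell^r$ yields the claimed description.

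The only step requiring genuine verification is that the tensor product $C_\bullet\otimes P_\bullet$ really is a complete resolution for $A$; the Künneth identification of the hom complex and the algebraic substitutions at the end are formal. This verification is entirely standard (compare Brown's book, Chapter~X), but it is the single point where one should be careful, since Farrell--Tate cohomology is sensitive to the precise compatibility between the acyclic complex and the chosen projective resolution.
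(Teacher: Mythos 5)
Your argument is correct, but it reaches the conclusion by a genuinely different mechanism than the paper. The paper's proof also starts from the K\"unneth decomposition $\op{H}^\bullet(A,\mathbb{F}_\ell)\cong\op{H}^\bullet(\mathbb{Z}/n\mathbb{Z},\mathbb{F}_\ell)\otimes_{\mathbb{F}_\ell}\bigwedge^\bullet\mathbb{F}_\ell^r$ for ordinary group cohomology, but then passes to Farrell--Tate cohomology by a soft argument: since the only finite subgroups of $A$ are cyclic, $A$ has periodic Farrell--Tate cohomology by \cite{brown:book}*{theorem X.6.7}, and since $\widehat{\op{H}}^\bullet$ agrees with $\op{H}^\bullet$ above the vcd (which is $r$), the formula valid in high degrees propagates by periodicity to all degrees. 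You instead build an explicit complete resolution $C_\bullet\otimes_{\mathbb{Z}}P_\bullet$ for $A$ and compute $\widehat{\op{H}}^\bullet$ directly from the definition; your verification that this is a complete resolution is the right one (boundedness of the Koszul resolution gives both acyclicity of the total complex, via the finite filtration, and the fact that $\tau\otimes\op{id}$ is the identity in total degrees $\geq N+r$), and the identification of the Hom complex is valid because both resolutions consist of finitely generated free modules, a point you leave implicit. What each approach buys: yours is self-contained and makes the product structure of the complete resolution (hence the multiplicativity of the isomorphism) transparent, at the cost of the resolution-level bookkeeping; the paper's is shorter and avoids any explicit complete resolution, but leans on two external facts (periodicity and agreement above the vcd) and is, as written, really an additive identification in each degree rather than an explicit ring isomorphism.
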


\begin{proof}
We begin with a computation of group cohomology. In this case, the K\"unneth formula implies 
$$
\op{H}^{\bullet}(A,\mathbb{F}_\ell)\cong \op{H}^\bullet(\mathbb{Z}/n\mathbb{Z},\mathbb{F}_\ell)
\otimes_{\mathbb{F}_\ell}\op{H}^\bullet(\mathbb{Z}^r,\mathbb{F}_\ell). 
$$
But $\op{H}^\bullet(\mathbb{Z}^r,\mathbb{F}_\ell)\cong \bigwedge^\bullet\mathbb{F}_\ell^r$, again by iterated application of the K\"unneth formula. Therefore, the first isomorphism claimed above is true with group cohomology instead of Farrell--Tate cohomology. 

Now group cohomology and Farrell--Tate cohomology agree above the virtual cohomological dimension, which in this case is $r$. Moreover, the only finite subgroup of $A$ is $\mathbb{Z}/n\mathbb{Z}$, and hence by \cite{brown:book}*{theorem X.6.7} the group $A$ has periodic Farrell--Tate cohomology. The latter in particular means that there is an integer $d$ such that $\widehat{\op{H}}^i(A,\mathbb{F}_\ell)\cong \widehat{\op{H}}^{i+d}(A,\mathbb{F}_\ell)$ for all $i$. These two assertions imply that the formula we obtained for group cohomology above is also true for Farrell--Tate cohomology. 

The second isomorphism then just combines the first isomorphism with the formula for the cyclic groups discussed earlier in Section~\ref{sec:pregrp}.
\end{proof}

Secondly, we discuss the case where the cyclic subgroup is Galois-invariant. Recall from Theorem~\ref{thm:conjsl2} that the group structure in this case is a semi-direct product $\ker\op{Nm}_1\rtimes \mathbb{Z}/2\mathbb{Z}$, where the action of $\mathbb{Z}/2\mathbb{Z}\cong\op{Gal}(K(\zeta_\ell)/K)$ on $\ker\op{Nm}_1$ is the natural Galois-action.  

\begin{proposition}
The Hochschild--Serre spectral sequence associated to the semi-direct product $\ker\op{Nm}_1\rtimes\mathbb{Z}/2\mathbb{Z}$ degenerates and yields an isomorphism
$$
\widehat{\op{H}}^\bullet(\ker\op{Nm}_1\rtimes\mathbb{Z}/2\mathbb{Z}, \mathbb{F}_\ell)\cong   \widehat{\op{H}}^\bullet(\ker\op{Nm}_1,\mathbb{F}_\ell)^{\mathbb{Z}/2\mathbb{Z}}.
$$
The $\op{Gal}(K(\zeta_\ell)/K)$-action on $\ker\op{Nm}_1$ is by multiplication with $-1$. The invariant classes are then given by $a_2^{\otimes 2i}$ tensor the even exterior powers plus $a_2^{\otimes (2i+1)}$ tensor the odd exterior powers. 
\end{proposition}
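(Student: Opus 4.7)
The plan is to apply the Lyndon--Hochschild--Serre spectral sequence for Farrell--Tate cohomology associated to the extension
$$
1\to \ker\op{Nm}_1 \to \ker\op{Nm}_1\rtimes\mathbb{Z}/2\mathbb{Z} \to \mathbb{Z}/2\mathbb{Z}\to 1,
$$
whose $E_2$-page reads
$$
E_2^{p,q} = \op{H}^p\bigl(\mathbb{Z}/2\mathbb{Z},\widehat{\op{H}}^q(\ker\op{Nm}_1,\mathbb{F}_\ell)\bigr).
$$
The Farrell--Tate version is legitimate because $\ker\op{Nm}_1\cong\mathbb{Z}/n\mathbb{Z}\times\mathbb{Z}^r$ has finite virtual cohomological dimension and the quotient $\mathbb{Z}/2\mathbb{Z}$ is finite (cf. Brown's book, chapter X). Since $\ell$ is odd, the order $|\mathbb{Z}/2\mathbb{Z}|=2$ is invertible in $\mathbb{F}_\ell$, and the standard transfer argument yields $\op{H}^p(\mathbb{Z}/2\mathbb{Z},M)=0$ for every $p>0$ and every $\mathbb{F}_\ell$-vector space $M$. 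Only the row $p=0$ survives, and the edge homomorphism produces the claimed isomorphism with $\mathbb{Z}/2\mathbb{Z}$-invariants.

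Next I would identify the involution. By Theorem~\ref{thm:conjsl2}, the quotient $\mathbb{Z}/2\mathbb{Z}$ acts on $\ker\op{Nm}_1\subseteq R_{K,S,\ell}^\times$ through the non-trivial $\mathcal{O}_{K,S}$-algebra automorphism $\sigma$ of $R_{K,S,\ell}$, which on the level of fields is the Galois involution swapping $\zeta_\ell$ and $\zeta_\ell^{-1}$. For $u\in\ker\op{Nm}_1$ the defining relation $u\cdot\sigma(u)=\op{Nm}_1(u)=1$ gives $\sigma(u)=u^{-1}$, so the induced $\mathbb{Z}/2\mathbb{Z}$-action on $\ker\op{Nm}_1$ is simply group inversion.

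Finally I would determine the induced action on the graded ring
$$
\widehat{\op{H}}^\bullet(\ker\op{Nm}_1,\mathbb{F}_\ell)\cong \mathbb{F}_\ell[a_2,a_2^{-1}](b_1,x_1,\dots,x_r)
$$
provided by Proposition~\ref{6.1}. Inversion is an involutive group endomorphism of $A=\ker\op{Nm}_1$, and on $\op{H}^1(A,\mathbb{F}_\ell)=\op{Hom}(A,\mathbb{F}_\ell)$ it manifestly acts by multiplication with $-1$, which handles the degree-$1$ exterior generators $b_1,x_1,\dots,x_r$. The periodicity class $a_2$ is the Bockstein of $b_1$, so by naturality of the Bockstein it is also scaled by $-1$. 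Consequently $\sigma$ multiplies a homogeneous monomial $a_2^k\otimes y_{i_1}\wedge\cdots\wedge y_{i_j}$ by $(-1)^{k+j}$, and the invariant subspace consists precisely of the even-total-weight combinations described in the statement. The one point that really needs verification is the sign of the action on $a_2$; after that is settled, the rest is pure book-keeping with exterior algebras.
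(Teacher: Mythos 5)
The paper states this proposition without proof, so there is no written argument to compare against; your proposal supplies exactly the argument the authors clearly intend. The spectral-sequence step is correct: since $\ell$ is odd, $|\mathbb{Z}/2\mathbb{Z}|$ is invertible in $\mathbb{F}_\ell$, so $\op{H}^p(\mathbb{Z}/2\mathbb{Z},V)=0$ for $p>0$ and any $\mathbb{F}_\ell$-vector space $V$, the $E_2$-page is concentrated in the column $p=0$, and the edge map gives the invariants. (One could even bypass the spectral sequence with the restriction--corestriction argument for the index-two subgroup.) Your identification of the action as inversion, via $u\cdot\sigma(u)=\op{Nm}_1(u)=1$, agrees with Theorem~\ref{thm:conjsl2}, and the bookkeeping with signs $(-1)^{k+j}$ on monomials reproduces the stated description of the invariants.

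The one imprecision is the step you yourself flag as the crux: the claim that $a_2$ is the Bockstein of $b_1$. Write the torsion subgroup of $\ker\op{Nm}_1$ as $\mathbb{Z}/n\mathbb{Z}$ and let $\ell^k$ be the exact power of $\ell$ dividing $n$. When $k\geq 2$ (which does occur, e.g.\ $\ell=3$, $K=\mathbb{Q}(\zeta_9+\zeta_9^{-1})$, where $\zeta_9\in\ker\op{Nm}_1$), the primary Bockstein satisfies $\beta(b_1)=0$, so it cannot produce $a_2$ and the naturality argument as stated collapses. The conclusion is nevertheless correct and the fix is easy: either use the $k$-th higher Bockstein, or argue directly that
$$
\op{H}^2(\mathbb{Z}/n\mathbb{Z},\mathbb{F}_\ell)\cong \op{H}^2(\mathbb{Z}/n\mathbb{Z},\mathbb{Z})\otimes\mathbb{F}_\ell\cong \op{Hom}(\mathbb{Z}/n\mathbb{Z},\mathbb{Q}/\mathbb{Z})\otimes\mathbb{F}_\ell,
$$
on which inversion manifestly acts by $-1$ (this also matches the sign recorded in part (iv) of Theorem~\ref{thm:main1}). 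With that substitution your proof is complete.
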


\begin{remark}
The statements above provide an $\op{SL}_2$-analogue of the computations of
Anton \cite{anton}. 
\end{remark}

\section{Conjugacy classification of elements of finite order}
\label{sec:conjugacy1}

In this section, we will discuss the conjugacy classification of elements of finite order in $\op{SL}_2$  over rings of $S$-integers in global fields, with some necessary augmentations. In the next section, we will use these results to provide a conjugacy classification of finite cyclic subgroups in $\op{SL}_2$. For a number field $K$, the general conjugacy classification of finite subgroups of $\op{SL}_2(\mathcal{O}_K)$ is due to Kr\"amer \cite{kraemer:diplom}. Special cases for totally real fields appeared before in the study of Hilbert modular  groups, cf. \cite{prestel} and \cite{schneider:75}. A more recent account of this can be found in \cite{maclachlan:torsion}. 

We will discuss here a generalization of these results to rings of $S$-integers in number fields. Some of the necessary modifications for this have been considered in \cite{busch:conjugacy}. Our exposition can be seen as a geometric formulation of the classification result of Latimer-MacDuffee \cite{latimer:macduffee}. 

\subsection{Notation}
\label{sec:notation}

Throughout this section, we let $K$ be a global field, 
$S$  be a non-empty set of places containing the infinite ones and consider the ring of $S$-integers $\mathcal{O}_{K,S}$ in $K$. 
We fix an odd prime $\ell$ different from the  characteristic of $K$, and assume that  $\zeta_\ell+\zeta_\ell^{-1}\in K$. 

Any element of exact order $\ell$ in $\op{SL}_2(\mathcal{O}_{K,S})$ will have characteristic polynomial  of the form $\Psi_\ell(T)=T^2-(\zeta_\ell+\zeta_\ell^{-1})T+1$ for some choice $\zeta_\ell$ of primitive $\ell$-th root of unity. We denote by $K(\Psi_\ell)=K[T]/\Psi_\ell(T)$ the corresponding $K$-algebra,  and by $R_{K,S,\ell}=\mathcal{O}_{K,S}[T]/(\Psi_\ell(T))$ the corresponding order in $K(\Psi_\ell)$. Note that $K(\Psi_\ell)$ is a field if $\zeta_\ell\not\in K$, and is isomorphic to $K\times K$ if $\zeta_\ell\in K$. 

There is an involution $\iota$ of $K(\Psi_\ell)$ given by sending $\zeta_\ell\mapsto \zeta_\ell^{-1}$, i.e., exchanging the two roots of the polynomial $\Psi_\ell(T)$. If $K(\Psi_\ell)$ is a field, then $\iota$ generates $\op{Gal}(K(\Psi_\ell)/K)$; if $K(\Psi_\ell)\cong K\times K$, it exchanges the two factors. In either case, $K(\Psi_\ell)^\iota$ is equal to $K$ embedded as constant polynomials. The involution $\iota$ restricts to $R_{K,S,\ell}$, because $\zeta_\ell$ is integral, and $R_{K,S,\ell}^\iota=\mathcal{O}_{K,S}$. Due to these statements, we will abuse notation and use $\op{Gal}(K(\Psi_\ell)/K)\cong \mathbb{Z}/2\mathbb{Z}\cong \langle\iota\rangle$ even if $K(\Psi_\ell)$ is not a field.

For definitions of class groups, structure of unit groups and other fundamental statements from algebraic number theory, we refer to \cite{neukirch:azt}. We will denote the class group of an $S$-integer ring by $\op{Pic}(\mathcal{O}_{K,S})$. 
Recall that for a finite extension $A\to B$ of commutative rings, there are transfer maps on algebraic K-theory $K_\bullet(B)\to K_\bullet(A)$. Two special cases of such transfer maps will be interesting for us. On the one hand, specializing to rings of Krull dimension one, there is a norm map on class groups $\op{Nm}_0({B/A}):\op{Pic}(B)\to \op{Pic}(A)$. The kernel of $\op{Nm}_0({B/A})$ is called the relative class group $\op{Pic}(B/A)$ of the extension $B/A$. On the other hand, there is a norm map on unit groups $\op{Nm}_1(B/A):B^\times\to A^\times$.

\subsection{Structure of relevant rings}

We first describe the structure of the rings $R_{K,S,\ell}$. If $K(\Psi_\ell)$ is a field, then $R_{K,S,\ell}\cong\mathcal{O}_{K,S}[\zeta_\ell]$  is an order in $K(\Psi_\ell)$. 
In general, it does not need to be a maximal order, hence it may fail to be a Dedekind ring.  

\begin{proposition}
\label{prop:intbasis}
Assume $K(\Psi_\ell)$ is a field, and assume that the prime $(\zeta_\ell-\zeta_\ell^{-1})$ is unramified in the extension $\mathcal{O}_{K,S}/\mathbb{Z}[\zeta_\ell+\zeta_\ell^{-1}]$. Then $\{1,\zeta_\ell\}$ is a  relative integral base for $\mathcal{O}_{K(\Psi_\ell),\tilde{S}}$ over $\mathcal{O}_{K,S}$, where $\tilde{S}$ is the set of places of $K(\Psi_\ell)$ lying over the places in $S$. In particular, $R_{K,S,\ell}\cong\mathcal{O}_{K(\Psi_\ell),\tilde{S}}$ is a Dedekind domain.
\end{proposition}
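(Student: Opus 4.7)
The plan is to compare the discriminant of the order $R_{K,S,\ell}=\mathcal{O}_{K,S}[\zeta_\ell]$ with the relative discriminant of the maximal order $\mathcal{O}_{K(\Psi_\ell),\tilde{S}}/\mathcal{O}_{K,S}$, and to show that the conductor is trivial at every prime. First I would compute $\op{disc}(\Psi_\ell)=(\zeta_\ell+\zeta_\ell^{-1})^2-4=(\zeta_\ell-\zeta_\ell^{-1})^2$, and note that this element lies in $\mathbb{Z}[\zeta_\ell+\zeta_\ell^{-1}]$ and generates the unique prime $\mathfrak{p}_0$ above $\ell$ there (one sees this from $\zeta_\ell-\zeta_\ell^{-1}=\zeta_\ell^{-1}(\zeta_\ell-1)(\zeta_\ell+1)$ in $\mathbb{Z}[\zeta_\ell]$, together with the fact that $\zeta_\ell+1$ is a unit since $\Phi_\ell(-1)=1$ for $\ell$ odd, and the standard factorization of $\ell$ in $\mathbb{Z}[\zeta_\ell]$). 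The key identity to exploit is
$$
\op{disc}(\Psi_\ell)\cdot\mathcal{O}_{K,S}=[\mathcal{O}_{K(\Psi_\ell),\tilde{S}}:R_{K,S,\ell}]^2\cdot\mathfrak{d}_{K(\Psi_\ell)/K},
$$
interpreted as ideals in $\mathcal{O}_{K,S}$ (after base change to the $S$-integers).

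Next I would check the equality prime by prime. At any prime $\mathfrak{p}$ of $\mathcal{O}_{K,S}$ not lying above $\ell$, the cyclotomic extension $K(\zeta_\ell)/K$ is unramified, so $\mathfrak{d}_{K(\Psi_\ell)/K}$ is a unit at $\mathfrak{p}$; on the other hand $(\zeta_\ell-\zeta_\ell^{-1})^2$ is a unit at $\mathfrak{p}$ as well, so the index is trivial there. At a prime $\mathfrak{p}$ above $\ell$, the unramifiedness hypothesis in (R1) gives $v_\mathfrak{p}(\mathfrak{p}_0\mathcal{O}_{K,S})=1$, hence $v_\mathfrak{p}(\op{disc}(\Psi_\ell))=1$. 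For the ramification of $L=K(\Psi_\ell)$ over $K$ at $\mathfrak{p}$, I would use that $L/K$ is obtained by base change from the totally ramified quadratic extension $\mathbb{Q}(\zeta_\ell)/\mathbb{Q}(\zeta_\ell+\zeta_\ell^{-1})$; combined with the assumed unramifiedness of $\mathfrak{p}$ over $\mathfrak{p}_0$, this yields $e_\mathfrak{p}(L/K)=2$ and $f_\mathfrak{p}(L/K)=1$. Since the residue characteristic $\ell$ is odd and does not divide $e=2$, the ramification is tame, so $v_\mathfrak{p}(\mathfrak{d}_{L/K})=e-1=1$.

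Feeding both sides into the discriminant identity gives $2v_\mathfrak{p}([\mathcal{O}_{L,\tilde{S}}:R_{K,S,\ell}])=1-1=0$ at every prime, so the conductor is trivial and $R_{K,S,\ell}=\mathcal{O}_{L,\tilde{S}}$; in particular $\{1,\zeta_\ell\}$ is a relative integral basis, and the latter being a localization of a Dedekind domain gives the Dedekind conclusion. The main obstacle I expect is the ramification bookkeeping at primes above $\ell$: one must justify cleanly that the degeneration of the two discriminant contributions is exactly compensated by tame ramification, and that the unramifiedness hypothesis in (R1) is the right way to ensure $v_\mathfrak{p}(\op{disc}(\Psi_\ell))=1$ rather than some larger multiple which would permit (and in fact force) the existence of a nontrivial conductor.
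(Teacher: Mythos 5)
Your argument is correct, but it is a genuinely different proof from the one in the paper. The paper argues element by element: it characterizes integrality of $a+b\zeta_\ell$ by integrality of $\op{Tr}$ and $\op{N}$, reduces modulo $(\zeta_\ell-\zeta_\ell^{-1})$, and shows that the universal $\mathbb{F}_\ell$-algebra with vanishing trace and norm, namely $\mathbb{F}_\ell[A,B]/(A^2,B^2,AB)$, is nilpotent --- so that reducedness of $\mathcal{O}_{K,S}/(\zeta_\ell-\zeta_\ell^{-1})$ (which is where (R1) enters there) forces $a,b\in\mathcal{O}_{K,S}$; the case $\ell=3$ has to be treated separately by a compositum-of-integral-bases argument because the norm condition degenerates mod $3$. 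You instead invoke the classical discriminant--index relation $\bigl(\op{disc}\Psi_\ell\bigr)=[\mathcal{O}_{L,\tilde S}:R_{K,S,\ell}]^2\,\mathfrak{d}_{L/K}$ (with the index read as a module index, an ideal of $\mathcal{O}_{K,S}$) and compare valuations prime by prime; your local computations are all correct: $(\zeta_\ell-\zeta_\ell^{-1})^2$ does generate the prime of $\mathbb{Z}[\zeta_\ell+\zeta_\ell^{-1}]$ over $\ell$ (via $(\zeta_\ell+1)$ being a unit and $(\ell)=(1-\zeta_\ell)^{\ell-1}$), (R1) gives $v_\mathfrak{p}(\op{disc}\Psi_\ell)=1$ at primes over $\ell$, the multiplicativity of ramification indices in the two towers through $K$ and through $\mathbb{Q}(\zeta_\ell)$ forces $e_\mathfrak{p}(L/K)=2$, $f=1$, and tameness ($\ell$ odd) gives $v_\mathfrak{p}(\mathfrak{d}_{L/K})=1$, so the index is trivial everywhere. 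What your route buys is uniformity --- no separate case for $\ell=3$ --- and a transparent accounting of exactly where (R1) is needed (to keep $v_\mathfrak{p}(\op{disc}\Psi_\ell)$ down to $1$); what the paper's route buys is that it stays entirely elementary, never invoking the different/discriminant machinery, and exhibits the obstruction to integrality concretely as a nilpotency phenomenon in the reduction mod $(\zeta_\ell-\zeta_\ell^{-1})$. Both proofs are complete; the only cosmetic point in yours is that one should say a word on why $\zeta_\ell\in\mathcal{O}_{L,\tilde S}$ (it is a root of the monic $\Psi_\ell$ over $\mathcal{O}_{K,S}$) so that the inclusion $R_{K,S,\ell}\subseteq\mathcal{O}_{L,\tilde S}$ underlying the index makes sense.
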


\begin{proof}
We consider an element $a+b\zeta_\ell\in K(\Psi_\ell)$. We have the trace and norm of this element given by
$$
\op{Tr}(a+b\zeta_\ell)=2a+b(\zeta_\ell+\zeta_\ell^{-1}),\qquad
\op{N}(a+b\zeta_\ell)=a^2+(\zeta_\ell+\zeta_\ell^{-1})ab+b^2.
$$
The element $a+b\zeta_\ell$ is integral over $\mathcal{O}_{K,S}$ if and only if norm and trace are elements of $\mathcal{O}_{K,S}$.

The discriminant of the basis $(1,\zeta_\ell)$ is concentrated at $\ell$, so we only need to worry about divisibility by elements in primes over $\ell$. 
We need a case distinction. 

(1) If $\ell>3$, we have $\zeta_\ell+\zeta_\ell^{-1}\not\equiv 2\mod 3$. Assume norm and trace are divisible by $(\zeta_\ell-\zeta_\ell^{-1})$. Then we consider the reduction $\mathcal{O}_{K,S}/(\zeta_\ell-\zeta_\ell^{-1})$. By assumption, this is a smooth algebra over  $\mathbb{Z}[\zeta_\ell+\zeta_\ell^{-1}]/(\zeta_\ell-\zeta_\ell^{-1})\cong\mathbb{F}_\ell$, in particular it has no nilpotent elements. We compute the universal example of an $\mathbb{F}_\ell$-algebra in which $\op{Tr}=\op{N}=0$; the result is $\mathbb{F}_\ell[A,B]/(A^2,B^2,AB)$. In particular, the elements $A$ and $B$ have to be nilpotent modulo $(\zeta_\ell-\zeta_\ell^{-1})$, hence necessarily have to be zero. Hence we find $(\zeta_\ell-\zeta_\ell^{-1})$ divides $A$ and $B$. Inductively, this deals with divisibility by powers of $(\zeta_\ell-\zeta_\ell^{-1})$. Now the argument for elements which have non-zero reduction in $\mathcal{O}_{K,S}/(\zeta_\ell-\zeta_\ell^{-1})$ is done in the same way: the latter, as an $\mathbb{F}_\ell$-algebra is 
a product of field extensions, hence any quotient of it will again have no nontrivial nilpotent elements. In particular, integrality of the norm and trace implies that the element is already an $\mathcal{O}_{K,S}$-linear combination of $1$ and $\zeta_\ell$. 

(2) The argument for (1) does not work in case $\ell=3$. In this case $2\equiv\zeta_3+\zeta_3^{-1}\mod (3)$. In particular, the norm condition mod 3  is satisfied whenever the trace condition is satisfied mod 3. However, in this case, we have $K/\mathbb{Q}$ has discriminant coprime to $3$ and $\mathbb{Q}(\zeta_3)/\mathbb{Q}$ has discriminant a power of $3$. Then the product of the integral bases for $K$ and $\mathbb{Q}(\zeta_3)$ is an integral basis for the composite $K(\zeta_3)$. In particular, any integral element of $K(\zeta_3)$ is an $\mathcal{O}_{K,S}$-linear combination of $1$ and $\zeta_3$, proving the claim.
\end{proof}

\begin{example}
\label{ex:noninv}
A simple example where $R_{K,S,\ell}$ fails to be a maximal order, due to ramification over $\ell$ is given as follows: take $\ell=3$, let $K=\mathbb{Q}(\sqrt{3})$ and let $S$  contain only the infinite places. In this case, $\mathcal{O}_K=\mathbb{Z}[\sqrt{3}]$ and $R_{K,S,\ell}=\mathbb{Z}[\sqrt{3},\zeta_3]$. The element 
$$
i=\frac{\sqrt{3}+2\sqrt{3}\zeta_3}{3}
$$
is integral, but it is not a $\mathbb{Z}[\sqrt{3}]$-linear combination of $1$ and $\zeta_3$. This type of problems necessitates the discussion of non-invertible ideals.
\end{example}

\begin{corollary}
\label{cor:steinitz1}
Assume $K(\Psi_\ell)$ is a field, and assume that the prime $(\zeta_\ell-\zeta_\ell^{-1})$ is unramified in the extension $\mathcal{O}_{K,S}/\mathbb{Z}[\zeta_\ell+\zeta_\ell^{-1}]$. Then $R_{K,S,\ell}$ is a free $\mathcal{O}_{K,S}$-module of rank two. In particular, there exists an $R_{K,S,\ell}$-ideal with $\mathcal{O}_{K,S}$-basis. 
\end{corollary}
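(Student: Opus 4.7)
The plan is to read the corollary as a direct reformulation of Proposition~\ref{prop:intbasis} in module-theoretic language, so essentially no new argument is needed; this is why the result is packaged as a corollary rather than a proposition.

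First I would unpack what ``relative integral basis'' means in the conclusion of Proposition~\ref{prop:intbasis}. That proposition asserts that every element of $R_{K,S,\ell}\cong \mathcal{O}_{K(\Psi_\ell),\tilde{S}}$ can be written as an $\mathcal{O}_{K,S}$-linear combination of $1$ and $\zeta_\ell$. The uniqueness of such a presentation is automatic: the hypothesis that $K(\Psi_\ell)$ is a field means $\Psi_\ell(T)$ is irreducible over $K$, so $K(\Psi_\ell)/K$ is a degree-two field extension and $\{1,\zeta_\ell\}$ is a $K$-basis of $K(\Psi_\ell)$, in particular $\mathcal{O}_{K,S}$-linearly independent inside $K(\Psi_\ell)\supset \mathcal{O}_{K,S}$. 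Combined with the spanning statement, this gives exactly that $R_{K,S,\ell}$ is a free $\mathcal{O}_{K,S}$-module of rank two with basis $\{1,\zeta_\ell\}$.

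For the ``in particular'' claim, I would simply exhibit the unit ideal as a witness: $R_{K,S,\ell}$ itself is an $R_{K,S,\ell}$-ideal, and by the first part it admits $\{1,\zeta_\ell\}$ as an $\mathcal{O}_{K,S}$-basis.

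There is no real obstacle in this argument; all the work is already in Proposition~\ref{prop:intbasis}, and in particular the case distinction on whether $\ell=3$ or $\ell>3$ is confined there. The reason the statement is worth isolating as a corollary is the Steinitz-class interpretation: the freeness of $R_{K,S,\ell}$ over $\mathcal{O}_{K,S}$ is equivalent to the vanishing of its Steinitz class, and this vanishing is what will be needed to set up the bijection between conjugacy classes and oriented relative ideals in the next subsection (cf.\ Definition~\ref{def:oriented} and Proposition~\ref{prop:biject}).
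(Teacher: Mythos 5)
Your proposal is correct and matches the paper's intent exactly: the corollary carries no separate proof in the paper and is meant to be read off from Proposition~\ref{prop:intbasis} precisely as you do, with spanning from the integral-basis statement, linear independence from $\{1,\zeta_\ell\}$ being a $K$-basis of the quadratic field $K(\Psi_\ell)$, and the unit ideal witnessing the ``in particular'' clause.
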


Next, we consider the case where $K(\Psi_\ell)\cong K\times K$ is not a field. In this case, $R_{K,S,\ell}$ is not a Dedekind  domain because it fails to be a domain. The structure of $R_{K,S,\ell}$ is described by the following: 

\begin{proposition}
\label{prop:split}
Assume $K(\Psi_\ell)\cong K\times K$ is not a field. Then the ring $$R_{K,S,\ell}\cong\mathcal{O}_{K,S}[T]/((T-\zeta_\ell)(T-\zeta_\ell^{-1}))$$ sits in a fiber product square
$$
\xymatrix{
  R_{K,S,\ell} \ar[rr]^{\op{ev}_{\zeta_\ell}} \ar[d]_{\op{ev}_{\zeta_\ell^{-1}}} && \mathcal{O}_{K,S} \ar[d]^{\pi}
\\
\mathcal{O}_{K,S} \ar[rr]_{\pi} && \mathcal{O}_{K,S}/(\zeta_\ell-\zeta_\ell^{-1}).
}
$$
The maps denoted by $\op{ev}_x$ evaluate $T\mapsto x$, and $\pi:\mathcal{O}_{K,S}\mapsto\mathcal{O}_{K,S}/(\zeta_\ell-\zeta_\ell^{-1})$ is the natural reduction modulo $(\zeta_\ell-\zeta_\ell^{-1})$. 

As a consequence, we have a ring isomorphism $R_{K,S,\ell}\cong\mathcal{O}_{K,S}\times\mathcal{O}_{K,S}$ if $S_\ell\subseteq S$, i.e., $S$ contains the places lying over $\ell$. By convention, this includes the case where $K$ is a global function field of characteristic different from $\ell$.
\end{proposition}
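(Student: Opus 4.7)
My plan is to treat the displayed square as a standard Milnor pullback square exploiting that $\Psi_\ell(T)$ factors in $\mathcal{O}_{K,S}[T]$ and applying the Chinese remainder philosophy with the conductor $(\zeta_\ell-\zeta_\ell^{-1})$. I will proceed in three steps.

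\textbf{Step 1: Set up.} Since $K(\Psi_\ell)\cong K\times K$ forces $\zeta_\ell\in K$, in $\mathcal{O}_{K,S}[T]$ we have the factorization $\Psi_\ell(T)=(T-\zeta_\ell)(T-\zeta_\ell^{-1})$. Because $\Psi_\ell$ is monic of degree $2$, the ring $R_{K,S,\ell}=\mathcal{O}_{K,S}[T]/(\Psi_\ell(T))$ is free of rank $2$ over $\mathcal{O}_{K,S}$ with basis $\{1,T\}$. The two evaluation maps $\op{ev}_{\zeta_\ell}$ and $\op{ev}_{\zeta_\ell^{-1}}$ from $R_{K,S,\ell}$ to $\mathcal{O}_{K,S}$ are then well-defined ring maps, and the square commutes because $\pi(\zeta_\ell)=\pi(\zeta_\ell^{-1})$ in $\mathcal{O}_{K,S}/(\zeta_\ell-\zeta_\ell^{-1})$.

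\textbf{Step 2: Verify the universal property.} I will show the induced map $R_{K,S,\ell}\to \mathcal{O}_{K,S}\times_{\mathcal{O}_{K,S}/(\zeta_\ell-\zeta_\ell^{-1})}\mathcal{O}_{K,S}$ is a bijection. Writing elements of $R_{K,S,\ell}$ uniquely as $\alpha+\beta T$, the assignment sends this to $(\alpha+\beta\zeta_\ell,\alpha+\beta\zeta_\ell^{-1})$. Given a compatible pair $(a,b)$ with $a-b\in(\zeta_\ell-\zeta_\ell^{-1})\mathcal{O}_{K,S}$, the equation $\beta(\zeta_\ell-\zeta_\ell^{-1})=a-b$ admits a (unique, as $\zeta_\ell-\zeta_\ell^{-1}$ is a nonzerodivisor) solution $\beta\in\mathcal{O}_{K,S}$, and setting $\alpha=a-\beta\zeta_\ell$ produces the required preimage; the second coordinate check is immediate. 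Uniqueness of $(\alpha,\beta)$ again uses that $\zeta_\ell-\zeta_\ell^{-1}$ is a nonzerodivisor.

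\textbf{Step 3: The split consequence.} I will argue that under the hypothesis $S_\ell\subseteq S$, the element $(\zeta_\ell-\zeta_\ell^{-1})$ becomes a unit in $\mathcal{O}_{K,S}$. Indeed, writing $\zeta_\ell-\zeta_\ell^{-1}=-\zeta_\ell^{-1}(1-\zeta_\ell)(1+\zeta_\ell)$ and noting that for odd $\ell$ the element $1+\zeta_\ell$ is a cyclotomic unit, this reduces to observing that the primes of $\mathcal{O}_{K,S}$ dividing $(1-\zeta_\ell)$ are exactly those lying over $\ell$, all of which are inverted in $\mathcal{O}_{K,S}$. Consequently $\mathcal{O}_{K,S}/(\zeta_\ell-\zeta_\ell^{-1})=0$, the fiber product collapses to the direct product, and $R_{K,S,\ell}\cong \mathcal{O}_{K,S}\times\mathcal{O}_{K,S}$. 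For $K$ a global function field of characteristic $\neq\ell$ the same conclusion holds because $\ell$ is already a unit in $K$, hence in $\mathcal{O}_{K,S}$, so again the conductor ideal is the unit ideal.

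The only point requiring a little care is ensuring that $(\zeta_\ell-\zeta_\ell^{-1})$ is genuinely a nonzerodivisor in $\mathcal{O}_{K,S}$ (which is automatic in a domain) and matching up the commutative-algebra identity in Step 2; no other step poses real difficulty since the whole argument is a direct computation with the free basis $\{1,T\}$.
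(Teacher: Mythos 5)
Your proposal is correct and follows essentially the same route as the paper: both exhibit the Milnor-type fiber product square with conductor $(\zeta_\ell-\zeta_\ell^{-1})$ and then observe that inverting the places over $\ell$ kills the quotient ring $\mathcal{O}_{K,S}/(\zeta_\ell-\zeta_\ell^{-1})$, collapsing the square to a direct product. The only (immaterial) differences are that you verify the pullback property by direct computation in the basis $\{1,T\}$ where the paper invokes the general square $R/(I\cap J)\to R/I\times_{R/(I+J)}R/J$ for $I=(T-\zeta_\ell)$, $J=(T-\zeta_\ell^{-1})$, and that you justify the invertibility of $\zeta_\ell-\zeta_\ell^{-1}$ via the cyclotomic-unit factorization rather than by citing $(\ell)=(\zeta_\ell^i-1)^{\ell-1}$ in $\mathbb{Z}[\zeta_\ell]$.
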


\begin{proof}
The fiber product square is a special case of the following fiber product for two ideals $I,J$ in ring $R$:
$$
\xymatrix{
  R/(I\cap J)\ar[r] \ar[d] & R/I\ar[d] \\
  R/J \ar[r] & R/(I+J),
}
$$
specialized to $R=\mathcal{O}_{K,S}[T]$, $I=(T-\zeta_\ell)$ and $J=(T-\zeta_\ell^{-1})$. In this case, $I\cap J=(\Psi_\ell(T))$ and $I+J=(\zeta_\ell-\zeta_\ell^{-1},T-\zeta_\ell)$. 

Note that in $\mathbb{Z}[\zeta_\ell]$, we have $(\ell)=(\zeta_\ell^i-1)^{\ell-1}$ for any $0<i<\ell$. In particular, $\zeta_\ell-\zeta_\ell^{-1}$ generates the maximal ideal of $\mathbb{Z}[\zeta_\ell]$ lying over $(\ell)$. If $S_\ell\subseteq S$, then $\zeta_\ell-\zeta_\ell^{-1}$ is invertible, and the quotient ring $\mathcal{O}_{K,S}/(\zeta_\ell-\zeta_\ell^{-1})$ is trivial. In that case, the fiber product diagram shows that $R_{K,S,\ell}$ is the direct product of two copies of $\mathcal{O}_{K,S}$. 
\end{proof}

We can also note that as $\mathcal{O}_{K,S}$-module, $R_{K,S,\ell}$ is always free of rank two, independent of the assumption $S_\ell\subseteq S$.

\begin{corollary}
\label{cor:steinitz2}
The following map is an isomorphism of $\mathcal{O}_{K,S}$-modules:
$$
\mathcal{O}_{K,S}\oplus \mathcal{O}_{K,S}\to
\mathcal{O}_{K,S}\times_{\mathcal{O}_{K,S}/(\zeta_\ell-\zeta_\ell^{-1})}\mathcal{O}_{K,S}: (a,b)\mapsto (a,a+b(\zeta_\ell-\zeta_\ell^{-1})).
$$
In particular, there always exists an ideal in $R_{K,S,\ell}$ which has an $\mathcal{O}_{K,S}$-basis.
\end{corollary}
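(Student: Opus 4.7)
The plan is to verify three things in turn: that the displayed map takes values in the fiber product, that it is $\mathcal{O}_{K,S}$-linear, and that it admits an explicit two-sided inverse. Linearity is immediate from the formula $(a,b)\mapsto (a, a+b(\zeta_\ell-\zeta_\ell^{-1}))$, since each coordinate is visibly $\mathcal{O}_{K,S}$-linear. To see that the image sits in the fiber product, I would reduce both coordinates modulo $(\zeta_\ell-\zeta_\ell^{-1})$: the first yields $\bar a$, and the second yields $\bar a + 0 = \bar a$, so the required compatibility $\pi(\text{first})=\pi(\text{second})$ holds.

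For the inverse, I would start from a pair $(x,y)$ in the fiber product. The defining condition $\pi(x)=\pi(y)$ means $y-x \in (\zeta_\ell-\zeta_\ell^{-1})\mathcal{O}_{K,S}$, so there exists $b\in\mathcal{O}_{K,S}$ with $y-x = b(\zeta_\ell-\zeta_\ell^{-1})$; one sets $(x,y)\mapsto(x,b)$. The only point requiring even a moment's care is that $b$ is uniquely determined, which follows because $\zeta_\ell-\zeta_\ell^{-1}$ is a non-zero-divisor in $\mathcal{O}_{K,S}$ (it is nonzero in the domain $\mathcal{O}_{K,S}$, nonzero precisely because $\ell$ is odd so that $\zeta_\ell\neq \zeta_\ell^{-1}$). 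A direct check then shows the two composites are the identity.

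For the ``in particular'' conclusion, I would combine the isomorphism just established with the fiber product presentation of $R_{K,S,\ell}$ from \prettyref{prop:split}. Together these exhibit $R_{K,S,\ell}$ as a free $\mathcal{O}_{K,S}$-module of rank two, so the unit ideal $R_{K,S,\ell}\subseteq R_{K,S,\ell}$ itself provides an ideal with an $\mathcal{O}_{K,S}$-basis. There is no substantive obstacle in the argument; the main work has already been done in \prettyref{prop:split}, and this corollary is really just the translation of that fiber product description into a concrete free-module statement.
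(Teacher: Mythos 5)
Your proof is correct and follows the only natural route: the paper states this corollary without proof as an immediate consequence of \prettyref{prop:split}, and your direct verification (well-definedness into the fiber product, linearity, and the explicit inverse using that $\zeta_\ell-\zeta_\ell^{-1}$ is a non-zero-divisor in the domain $\mathcal{O}_{K,S}$) is exactly the check the paper leaves implicit. The identification of the unit ideal as the required ideal with $\mathcal{O}_{K,S}$-basis is likewise the intended conclusion.
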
 


\begin{remark}
\label{rem:noninv2}
Since $R_{K,S,\ell}$ fails to be a domain, we cannot really speak about fractional ideal or invertibility of ideals anyway. However, behaviour of ideals in the present situation is fairly similar to the case where $K(\Psi_\ell)$ is a field, and $R_{K,S,\ell}\subseteq K(\Psi_\ell)$ an order. 
We can define a conductor 
$$
\mathfrak{c}=\left\{x\in K\times K\mid x\cdot (\mathcal{O}_{K,S}\times\mathcal{O}_{K,S})\subseteq R_{K,S,\ell}\right\}=
\{(a,b)\subseteq R_{K,S,\ell}\mid a,b\in(\zeta_\ell-\zeta_\ell^{-1})\mathcal{O}_{K,S}\}.
$$
As an example of what can go wrong with ideals which are not coprime to the conductor, we will discuss the case $K=\mathbb{Z}$ (deviating for once from the convention that $\ell$ is an odd prime). In this case, the ring $R_{K,S,\ell}=\mathbb{Z}\times_{\mathbb{Z}/2\mathbb{Z}}\mathbb{Z}$, and the conductor is the ideal $\{(a,b)\in\mathbb{Z}^2\mid a,b\in 2\mathbb{Z}\}$. This ideal is in fact not a principal ideal, it is generated by $(2,0)$ and $(0,2)$. Note that since the idempotents $(1,0)$ and $(0,1)$ in $\mathbb{Z}^2$ fail to lie in $R_{K,S,\ell}$, the ideal generated by $(2,2)$ will contain $(4,0)$ and $(0,4)$,  but not $(2,0)$ or $(0,2)$. For the conductor $\mathfrak{c}$, we have $\mathfrak{c}^2=(2,2)\mathfrak{c}$, and the conductor is the only non-invertible (and non-principal) ideal class. Note also that the conductor, while not a projective $R_{K,S,\ell}$-module, is free as $\mathbb{Z}$-module. This is very similar to the situation of the order $\mathbb{Z}[\sqrt{-3}]$ in $\mathbb{Z}[\zeta_3]$ from Example~\ref{ex:noninv}. 
\end{remark}

\subsection{Conjugacy classes of elements and oriented ideal classes}

As a next step, we can relate conjugacy classes of elements with oriented relative ideal classes. Fix a global field $K$, a set places $S$ and a prime $\ell$ as in Subsection~\ref{sec:notation}. We fix a primitive $\ell$-th root of unity $\zeta_\ell$ and denote by $\mathcal{C}_{K,S,\ell}$ the set of conjugacy classes of order $\ell$ elements in $\op{SL}_2(\mathcal{O}_{K,S})$ whose characteristic polynomial is $\Psi_\ell(T)=T^2-(\zeta_\ell+\zeta_\ell^{-1})T+1$. We will omit $K$ and $S$ whenever they are clear from the context. 

To relate these conjugacy classes of elements to data of a more number-theoretic nature, we consider the following ``oriented relative ideal classes'', i.e.,  classes of ideals whose determinant is principal, plus the additional datum of an element generating the determinant. Definitions like the following have been used in the conjugacy classification of finite order elements in symplectic groups over principal ideal domains, cf. \cite{busch:conjugacy}*{Section 3}. 

\begin{remark}
Note that requiring trivial norm (as in \cite{busch:conjugacy}) is different from requiring trivial determinant (as in the definitions and results below). However, we will only consider the situations in Corollaries~\ref{cor:steinitz1} and \ref{cor:steinitz2} in which we know that the Steinitz class is trivial and therefore norm and determinant are equivalent ideals.
\end{remark}

\begin{definition}
\label{def:oriented}
With the notation above, an oriented relative ideal of $R_{K,S,\ell}$ is a pair $(\mathfrak{a},a)$, where $\mathfrak{a}\subseteq K(\Psi_\ell)$ is a fractional $R_{K,S,\ell}$-ideal together with a choice of generator $a\in\bigwedge^2_{\mathcal{O}_{K,S}}\mathfrak{a}$. This in particular implies that $\mathfrak{a}\cong\mathcal{O}_{K,S}^2$ as $\mathcal{O}_{K,S}$-modules.

Define the following equivalence relation on oriented relative ideals:
$$
(\mathfrak{a},a)\sim (\mathfrak{b},b)\iff
\exists \tau\in K(\Psi_\ell)^\times, \tau\mathfrak{a}=\mathfrak{b}, 
(\op{m}_\tau\wedge \op{m}_\tau)(a)=b,
$$
where $\op{m}_\tau$ denotes multiplication with $\tau$ on $\mathfrak{a}$. Note that $(\op{m}_\tau\wedge\op{m}_\tau)$ is multiplication with the norm $N_{K(\Psi_\ell)/K}(\tau)$ on $\bigwedge^2_{K}K(\Psi_\ell)$ and maps $\bigwedge^2\mathfrak{a}$ to $\bigwedge^2\mathfrak{b}$. 
The $\sim$-equivalence class of an oriented relative ideal $(\mathfrak{a},a)$ is denoted by $[\mathfrak{a},a]$. 

The set of oriented relative ideal classes $\widetilde{\op{Pic}}(R_{K,S,\ell}/\mathcal{O}_{K,S})$ is defined as the set of $\sim$-equivalence classes of oriented relative ideals of $R_{K,S,\ell}$.
\end{definition}

\begin{remark}
We shortly comment on the choice of terminology. 

Including the qualifier ``relative'' is rather common terminology; the relative class group is the kernel of the norm map $\op{Nm}:\op{Pic}(B)\to\op{Pic}(A)$ for a finite extension $B/A$ of Dedekind rings. In the cases we consider, norm and determinant are equivalent ideals.

Using the qualifier ``oriented'' is inspired from a more geometric way of stating the above definition. In a function field situation, we would have a (possibly branched) degree $2$ covering $f:X\to Y$. A line bundle $\mathcal{L}$ on $X$ gives rise to a rank $2$ vector bundle $f_\ast\mathcal{L}$ on $Y$. The requirement in the above definition means that  $f_\ast\mathcal{L}$ has trivial determinant, and includes the datum of a given trivialization. This means that $f_\ast\mathcal{L}$ is in fact an orientable vector bundle, with a given choice of orientation. This puts the class group considered above in  the context of algebraic cohomology theories like oriented Chow groups.
\end{remark}

With the above terminology, we can now establish the following bijection between conjugacy classes of elements and oriented relative ideal classes. The arguments follow those of \cite{busch:conjugacy}.

\begin{proposition}
\label{prop:biject}
There is a bijection between $\widetilde{\op{Pic}}(R_{K,S,\ell}/\mathcal{O}_{K,S})$ and $\mathcal{C}_{K,S,\ell}$.
\end{proposition}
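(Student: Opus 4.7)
The approach is to adapt the classical Latimer--MacDuffee correspondence (as in \cite{latimer:macduffee}, \cite{busch:conjugacy}), augmented with the orientation datum in order to distinguish $\op{SL}_2$-conjugacy from $\op{GL}_2$-conjugacy. In one direction, given $g \in \op{SL}_2(\mathcal{O}_{K,S})$ with characteristic polynomial $\Psi_\ell$, I would make $\mathcal{O}_{K,S}^2$ into an $R_{K,S,\ell}$-module by letting $T$ act as $g$; this is well-defined by Cayley--Hamilton. After extending scalars to $K$, the module $K^2$ becomes free of rank one over $K(\Psi_\ell)$, so any choice of $K(\Psi_\ell)$-linear embedding realizes $\mathfrak{a} := \mathcal{O}_{K,S}^2$ as a fractional $R_{K,S,\ell}$-ideal inside $K(\Psi_\ell)$, with orientation $a := e_1 \wedge e_2$. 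Changing the embedding by $\tau \in K(\Psi_\ell)^\times$ replaces $(\mathfrak{a},a)$ with $(\tau\mathfrak{a}, N(\tau)a)$, which is precisely the equivalence of Definition~\ref{def:oriented}, so $[\mathfrak{a},a]$ is well-defined. Conjugation $g \mapsto hgh^{-1}$ by $h \in \op{SL}_2(\mathcal{O}_{K,S})$ yields the same oriented ideal class, because $h$ itself supplies an $\mathcal{O}_{K,S}$-module isomorphism commuting with the $R_{K,S,\ell}$-action and (since $\det h = 1$) preserving the orientation.

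For the inverse map, using Corollaries~\ref{cor:steinitz1} and~\ref{cor:steinitz2} I would choose an $\mathcal{O}_{K,S}$-basis $\{v_1, v_2\}$ of $\mathfrak{a}$ satisfying $v_1 \wedge v_2 = a$. Multiplication by $\zeta_\ell$ is $\mathcal{O}_{K,S}$-linear, and its matrix $g$ in this basis automatically lies in $\op{SL}_2(\mathcal{O}_{K,S})$ with characteristic polynomial $\Psi_\ell$, since $\det g = N_{K(\Psi_\ell)/K}(\zeta_\ell)$ equals the constant term of $\Psi_\ell$, namely $1$. A different orientation-compatible basis is related to the first by an element of $\op{SL}_2(\mathcal{O}_{K,S})$, so the conjugacy class of $g$ is well-defined; and if $(\mathfrak{a},a) \sim (\mathfrak{b},b)$ via $\tau$, then $\op{m}_\tau$ provides a change of basis of determinant $N(\tau) = 1$ (by the orientation-compatibility condition), giving $\op{SL}_2$-conjugate matrices. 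The two constructions are mutually inverse essentially by inspection, since the $R_{K,S,\ell}$-module structure on $\mathcal{O}_{K,S}^2$ built from $g$ has the matrix of multiplication by $T$ in the standard basis equal to $g$ itself.

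The main obstacle is the careful handling of the split case $K(\Psi_\ell) \cong K \times K$, where $R_{K,S,\ell}$ is not a Dedekind domain (see Remark~\ref{rem:noninv2}). Here ``fractional ideal'' must be interpreted as a finitely generated $R_{K,S,\ell}$-submodule of $K(\Psi_\ell)$ containing a non-zero-divisor, and the key point to verify is that $\mathcal{O}_{K,S}^2$ equipped with the action of an element of exact order $\ell$ is a faithful $R_{K,S,\ell}$-module. This follows because $\Psi_\ell$ is then the minimal polynomial of $g$, so no nonzero element of $R_{K,S,\ell}$ annihilates the module, and one recovers an embedding into $K(\Psi_\ell)$ after tensoring with $K$. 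The existence of orientation-compatible $\mathcal{O}_{K,S}$-bases in this case is exactly the content of Corollary~\ref{cor:steinitz2}, ensuring that the inverse construction goes through uniformly in both the split and non-split settings.
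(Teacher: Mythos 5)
Your proposal is correct and follows essentially the same route as the paper's proof: the same Latimer--MacDuffee-style correspondence, the same well-definedness checks (independence of the embedding $K^2\cong K(\Psi_\ell)$ up to scaling by $\tau$, invariance under $\op{SL}_2$-conjugation via $\det=1$, and the use of orientation-compatible bases from Corollaries~\ref{cor:steinitz1} and~\ref{cor:steinitz2} for the inverse map). Your extra attention to the split case and the faithfulness of the module is a welcome clarification of a point the paper treats only implicitly; the only phrasing to tidy is that the equivalence $(\mathfrak{a},a)\sim(\mathfrak{b},b)$ does not force $N(\tau)=1$, but rather forces the change-of-basis matrix between the two oriented bases to have determinant $1$.
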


\begin{proof}
Part (1) and (2) of the proof set up the maps between these sets, and part (3) shows that these maps are inverses of each other.

(1) We first describe the map from elements of order $\ell$ with characteristic polynomial $\Psi_\ell(T)$ to oriented ideals. Let $\rho\in\op{SL}_2(\mathcal{O}_{K,S})$ be an element with characteristic polynomial $\Psi_\ell(T)$. Via the standard representation of $\op{SL}_2$, it acts on $K^2$, giving the latter the structure of a rank one $K(\Psi_\ell)$-module. Choosing an isomorphism $K^2\cong K(\Psi_\ell)$, the standard lattice $\mathcal{O}_{K,S}^2\subseteq K^2$ gives rise to a finitely generated $R_{K,S,\ell}$-submodule $\mathcal{O}_{K,S}^2\subseteq K^2\cong K(\Psi_\ell)$, hence a fractional ideal $\mathfrak{a}$. Moreover, in the above, we have chosen a basis of $R_{K,S,\ell}$ as $\mathcal{O}_{K,S}$-module, and this basis gives a generator $a\in\bigwedge_{\mathcal{O}_{K,S}}^2\mathfrak{a}$. 

(1a) The assignment in (1) is independent of the choice of isomorphism $K^2\cong K(\Psi_\ell)$, up to $\sim$-equivalence of oriented relative ideals. Any other isomorphism will be obtained by scaling with  $\lambda\in K(\Psi_\ell)^\times$. This changes the fractional ideal $\mathfrak{a}$ by multiplication with $\lambda$, and the volume form $a$ by multiplication with the norm $N_{K(\Psi_\ell)/K}(\lambda)$. Hence it does not change the class of the associated oriented relative ideal $[\mathfrak{a},a]$ in $\widetilde{\op{Pic}}(R_{K,S,\ell}/\mathcal{O}_{K,S})$.

(1b) More generally, $\op{SL}_2(\mathcal{O}_{K,S})$-conjugate elements are mapped to $\sim$-equivalent oriented relative ideals. Let $\rho\in\op{SL}_2(\mathcal{O}_{K,S})$ be an element with characteristic polynomial $\Psi_\ell(T)$, and let $A\in \op{SL}_2(\mathcal{O}_{K,S})$, so that $\rho$ and $\rho'=A^{-1}\rho A$ are conjugate via $A$. Choose an isomorphism $\phi:K^2\to K(\Psi_\ell)$ which maps $\rho$ to the oriented relative ideal class $[\phi(\mathcal{O}_{K,S}^2),\phi(e_1)\wedge\phi(e_2)]$. The isomorphism $\phi'=\phi\circ A:K^2\to K(\Psi_\ell)$ maps $\rho'$ to the oriented relative ideal class $[\phi'(\mathcal{O}_{K,S}^2),\phi'(e_1)\wedge\phi'(e_2)]$. We claim that these ideal classes are equal. Since $\phi$ is $\mathcal{O}_{K,S}$-linear and $\det(A)=1$, we find that these ideal classes are equal:
\begin{eqnarray*}
[\phi'(\mathcal{O}_{K,S}^2),\phi'(e_1)\wedge\phi'(e_2)]&=&
[\phi(A\cdot\mathcal{O}_{K,S}^2),\phi(A\cdot e_1)\wedge\phi(A\cdot e_2)]\\&=&
[\phi(\mathcal{O}_{K,S}^2),\det(A)\cdot\left(\phi(e_1)\wedge\phi(e_2)\right)]\\
&=&[\phi(\mathcal{O}_{K,S}^2),\phi(e_1)\wedge\phi(e_2)].
\end{eqnarray*}
Essentially, conjugation changes the $K(\Psi_\ell)$-structure of $K^2$, but it leaves invariant the standard lattice $\op{SL}_2(\mathcal{O}_{K,S})$ and the volume form.

(2) Now we describe the map from oriented relative ideals to elements of order $\ell$ with characteristic polynomial $\Psi_\ell(T)$.  Let $(\mathfrak{a},a)$ be an oriented relative ideal. Then $\mathfrak{a}\subseteq K(\Psi_\ell)$ is a finitely generated $R_{K,S,\ell}$-submodule which is isomorphic to $\mathcal{O}_{K,S}^2$, because we have given an explicit trivialization $a\in\bigwedge^2_{\mathcal{O}_{K,S}}\mathfrak{a}$.  Choosing an $\mathcal{O}_{K,S}$-basis of $\mathfrak{a}$ with volume form $a$, we can write multiplication with $\zeta_\ell\in R_{K,S,\ell}$ as a matrix in $\op{SL}_2(\mathcal{O}_{K,S})$. 

(2a) Any two bases of $\mathfrak{a}$ with given volume $a\in\bigwedge^2_{\mathcal{O}_{K,S}}\mathfrak{a}$ are $\op{SL}_2(\mathcal{O}_{K,S})$-conjugate. Therefore,  the class in $\mathcal{C}_{K,S,\ell}$ of the matrix  $\rho\in\op{SL}_2(\mathcal{O}_{K,S})$  associated to the oriented relative ideal $(\mathfrak{a},a)$ is independent of the choice of basis. 

(2b) Assume that the oriented relative ideals $(\mathfrak{a},a)$ and $(\mathfrak{b},b)$ are $\sim$-equivalent, i.e., there exists $\tau\in K(\Psi_\ell)^\times$ with $\tau\mathfrak{a}=\mathfrak{b}$ and $(\op{m}_\tau\wedge\op{m}_\tau)(a)=b$. In particular, $\op{m}_\tau$ induces an isomorphism $\mathfrak{a}\cong\mathfrak{b}$ as $R_{K,S,\ell}$-modules. An $\mathcal{O}_{K,S}$-basis of $\mathfrak{a}$ will be mapped by $\op{m}_\tau$ to a basis of $\mathfrak{b}$, and the corresponding representing matrices will be $\op{GL}_2(\mathcal{O}_{K,S})$-conjugate. Moreover, since the volume forms correspond under the $R_{K,S,\ell}$-module isomorphism, the conjugating change-of-basis matrix will  already lie in $\op{SL}_2(\mathcal{O}_{K,S})$.

(3) It is now easy to see that the maps constructed in (1) and (2) are inverses of each other. From a matrix, we get a fractional $R_{K,S,\ell}$-ideal $\mathfrak{a}\subseteq K(\Psi_\ell)$, with chosen basis in which $\Psi_\ell$ acts via the given matrix. Conversely, starting from an oriented relative ideal, we can choose an $\mathcal{O}_{K,S}$-basis and write out the representing matrix, which will then give back the $R_{K,S,\ell}$-module structure we started with.
\end{proof}

\begin{remark}
Our main interest in this paper is the conjugacy classification of finite order subgroups in $\op{SL}_2(\mathcal{O}_{K,S})$. However, there are similar results in the case $\op{GL}_2$, which actually can be formulated slightly easier. The basic correspondence between ideal classes in extensions and conjugacy classes of  elements  was already described by Latimer--MacDuffee \cite{latimer:macduffee}, and later generalized by Taussky and Bender. A variation of \cite{bender}*{theorem~1} shows that for a Dedekind domain $R$, there is a bijective correspondence between conjugacy classes of elements of order $\ell$ and ideal classes in $R[\zeta_\ell]$ which have an $R$-basis. 

The actual correspondence is fairly easy to setup, and is well explained in \cite{latimer:macduffee}, or \cite{conrad} for a more modern exposition. An ideal of $R[\zeta_\ell]$ with $R$-basis gives rise to a conjugacy class of  elements in $\op{GL}_2(R)$ by writing multiplication with $\zeta_\ell$ in some $R$-basis. Conversely, given an element $\rho\in\op{GL}_2(R)$, its action on $R^2$ gives the latter the structure of fractional ideal for $R[\zeta_\ell]$.  The arguments are similar to what we have done for $\op{SL}_2$ above, only easier because there is no fixed orientation to carry around.
\end{remark}

\newpage
\begin{proposition}
Assume one of the following conditions: 
\begin{enumerate}[({R}1)]
\item $\ell\not\in K$ and the prime $(\zeta_\ell-\zeta_\ell^{-1})$ is unramified in the extension $\mathcal{O}_{K,S}/\mathbb{Z}[\zeta_\ell+\zeta_\ell^{-1}]$. 
\item $\ell\in K$ and $S_\ell\subseteq S$.
\end{enumerate}
Then there is a natural group structure on $\widetilde{\op{Pic}}(R_{K,S,\ell}/\mathcal{O}_{K,S})$, given by multiplication of ideals and volume forms.
\end{proposition}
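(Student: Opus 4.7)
The plan is to build the group structure directly from multiplication of fractional ideals and multiplication in $K$, using the norm-ideal interpretation of the top exterior power.

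First, I would record that conditions (R1) and (R2) ensure invertibility. Under (R1), Proposition~\ref{prop:intbasis} gives that $R:=R_{K,S,\ell}$ equals the maximal order $\mathcal{O}_{K(\Psi_\ell),\tilde{S}}$ and is therefore Dedekind; under (R2), Proposition~\ref{prop:split} gives $R\cong\mathcal{O}_{K,S}\times\mathcal{O}_{K,S}$, a product of two Dedekind rings. In either case every nonzero fractional $R$-ideal inside $K(\Psi_\ell)$ is invertible, so the fractional $R$-ideals form an abelian group under multiplication, and the set of classes $[\mathfrak{a}]\in\op{Pic}(R)$ admitting an $\mathcal{O}_{K,S}$-basis is precisely the subgroup $\ker\op{Nm}_0$.

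Second, I would make the key identification that lets one multiply volume forms. Fix the $K$-basis $\{1,\zeta_\ell\}$ of $K(\Psi_\ell)$, and use it to identify $\bigwedge^2_K K(\Psi_\ell)$ with $K$ via $1\wedge\zeta_\ell\mapsto 1$. A direct calculation on principal ideals (for $\mathfrak{a}=\tau R$ one computes $\tau\wedge\tau\zeta_\ell = N_{K(\Psi_\ell)/K}(\tau)\cdot(1\wedge\zeta_\ell)$) together with multiplicativity in $\op{Pic}(R)$ shows that, under this identification, $\bigwedge^2_{\mathcal{O}_{K,S}}\mathfrak{a}$ equals the ideal norm $\op{Nm}(\mathfrak{a})\subseteq K$. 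A generator $a\in\bigwedge^2_{\mathcal{O}_{K,S}}\mathfrak{a}$ then corresponds to an element of $K^\times$ generating the fractional $\mathcal{O}_{K,S}$-ideal $\op{Nm}(\mathfrak{a})$. Since $\op{Nm}(\mathfrak{a}\mathfrak{b})=\op{Nm}(\mathfrak{a})\op{Nm}(\mathfrak{b})$, the product $ab\in K^\times$ is again a generator of the norm of the product ideal.

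Third, I would define
\[
[\mathfrak{a},a]\cdot[\mathfrak{b},b]\;:=\;[\mathfrak{a}\mathfrak{b},\,ab],
\]
and check the group axioms in turn. Well-definedness: if $\tau,\sigma\in K(\Psi_\ell)^\times$ witness $(\mathfrak{a},a)\sim(\mathfrak{a}',a')$ and $(\mathfrak{b},b)\sim(\mathfrak{b}',b')$, then $\tau\sigma$ sends $\mathfrak{a}\mathfrak{b}$ to $\mathfrak{a}'\mathfrak{b}'$ and acts on $\bigwedge^2_{\mathcal{O}_{K,S}}(\mathfrak{a}\mathfrak{b})\cong K$ by $N_{K(\Psi_\ell)/K}(\tau\sigma)=N(\tau)N(\sigma)$, sending $ab$ to $a'b'$. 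Associativity and commutativity are inherited from multiplication of ideals and of elements of $K$. The identity is $[R,1]$, and the inverse of $[\mathfrak{a},a]$ is $[\mathfrak{a}^{-1},a^{-1}]$ (with $a^{-1}\in K^\times$ generating $\op{Nm}(\mathfrak{a}^{-1})=\op{Nm}(\mathfrak{a})^{-1}$).

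The main obstacle is the identification $\bigwedge^2_{\mathcal{O}_{K,S}}\mathfrak{a}=\op{Nm}(\mathfrak{a})$ in case (R2), where $R$ fails to be a domain. One handles this by reducing to the component decomposition $\mathfrak{a}=\mathfrak{a}_1\times\mathfrak{a}_2$ of Proposition~\ref{prop:split}, under which $\bigwedge^2_{\mathcal{O}_{K,S}}\mathfrak{a}\cong\mathfrak{a}_1\otimes_{\mathcal{O}_{K,S}}\mathfrak{a}_2=\mathfrak{a}_1\mathfrak{a}_2=\op{Nm}(\mathfrak{a})$; the multiplicativity of the norm and the computation of the volume form of a principal ideal then go through componentwise, and the rest of the argument is formally identical to case (R1).
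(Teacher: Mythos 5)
Your proposal is correct and follows essentially the same route as the paper: multiplication of ideals and of volume forms inside $\bigwedge^2_K K(\Psi_\ell)\cong K$, identity element $[R_{K,S,\ell},\,1\wedge T]$, and invertibility of ideals supplied by Proposition~\ref{prop:intbasis} (Dedekind) under (R1) and by the product decomposition of Proposition~\ref{prop:split} under (R2). The only cosmetic difference is that you take the inverse to be $[\mathfrak{a}^{-1},a^{-1}]$ whereas the paper uses $\iota(\mathfrak{a})$ with the induced volume form; these agree since $\mathfrak{a}\cdot\iota(\mathfrak{a})$ is generated by the norm of $\mathfrak{a}$ and norms give orientations $\sim$-equivalent to $1$.
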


\begin{proof}
Multiplication is given by ideal multiplication and multiplication of volume forms. The natural element is the trivial ideal class oriented by $1$. Here $\bigwedge^2_{\mathcal{O}_{K,S}}R_{K,S,\ell}$ is viewed as the natural sub-$\mathcal{O}_{K,S}$-module of $\bigwedge^2_KK(\Psi_\ell)\cong K$. Since $T\in R_{K,S,\ell}$ has norm $1$, the basis $(1,T)$ maps to $1\wedge T\in \bigwedge^2_{\mathcal{O}_{K,S}}R_{K,S,\ell}$, which maps to $1$ in $\bigwedge^2_KK(\Psi_\ell)\cong K$. This implies that all the axioms for a group operation are satisfied except the invertibility.

In case (R1), Proposition~\ref{prop:intbasis} implies that $R_{K,S,\ell}$ is a Dedekind ring. In particular, ideals are invertible with respect to multiplication, the inverse of $I$ is given by $\iota(I)$. Since the inverse  of an ideal has inverse norm, forming inverses preserves the property of having an $\mathcal{O}_{K,S}$-basis. 

In case (R2), $R_{K,S,\ell}$ is not a Dedekind ring, but it is of the form $\mathcal{O}_{K,S}\times\mathcal{O}_{K,S}$. The ideals are then given by pairs of ideals of $\mathcal{O}_{K,S}$, with entrywise multiplication. Since $\mathcal{O}_{K,S}$ is a Dedekind ring, ideals in $R_{K,S,\ell}$ are invertible for the multiplication. 

In both cases, the volume form of $I$ induces a volume form for $\iota(I)$. Since elements in the image of the norm $N_{R_{K,S,\ell}/\mathcal{O}_{K,S}}$ provide volume forms $\sim$-equivalent to $1$, this provides the inverse. 
\end{proof}

We will speak of the \emph{oriented relative class group of $R_{K,S,\ell}/\mathcal{O}_{K,S}$} in those situations where the assumptions (R1)  or (R2) are satisfied and the above proposition implies that the set $\widetilde{\op{Pic}}(R_{K,S,\ell}/\mathcal{O}_{K,S})$ has a group structure.

\subsection{Oriented class groups in the case $\zeta_\ell\not\in K$}

The next step is to identify the set of oriented relative ideal classes in terms of more conventional data from algebraic number theory. This will be done in the next subsections, handling the two cases $\zeta_\ell\not\in K$ and $\zeta_\ell\in K$ separately. As it turns out, the conjugacy classification of elements of finite order in $\op{SL}_2(\mathcal{O}_{K,S})$ is controlled mostly by kernels and cokernels of norm maps.




\begin{proposition}
\label{prop:lmsl}
Under assumption (R1), the oriented relative class group $\widetilde{\op{Pic}}(R_{K,S,\ell})$ sits in an extension 
$$
1\to \op{coker}\left(\op{Nm}_1:R_{K,S,\ell}^\times\to \mathcal{O}_{K,S}^\times\right)\to \widetilde{\op{Pic}}(R_{K,S,\ell}/\mathcal{O}_{K,S})\to   \op{Pic}(R_{K,S,\ell}/\mathcal{O}_{K,S})\to 1.
$$
\end{proposition}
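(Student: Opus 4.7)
The plan is to construct the asserted short exact sequence by producing the obvious forgetful map on the right and then identifying its kernel with the cokernel of $\op{Nm}_1$. Define
$$
\pi\colon \widetilde{\op{Pic}}(R_{K,S,\ell}/\mathcal{O}_{K,S})\longrightarrow \op{Pic}(R_{K,S,\ell}/\mathcal{O}_{K,S}),\qquad [\mathfrak{a},a]\mapsto [\mathfrak{a}],
$$
forgetting the volume form. First I would check this is well-defined: an $\sim$-equivalence $\tau\mathfrak{a}=\mathfrak{b}$ obviously identifies the underlying $R_{K,S,\ell}$-ideal classes, and the requirement that $\bigwedge^2_{\mathcal{O}_{K,S}}\mathfrak{a}$ be free means precisely that the Steinitz class of $\mathfrak{a}$ as an $\mathcal{O}_{K,S}$-module is trivial; under (R1), $R_{K,S,\ell}$ is Dedekind by Proposition~\ref{prop:intbasis}, and for a rank-two ideal the Steinitz class coincides with the norm class, so $[\mathfrak{a}]$ indeed lands in $\op{Pic}(R_{K,S,\ell}/\mathcal{O}_{K,S})=\ker\op{Nm}_0$.

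Next I would show surjectivity of $\pi$. Given an ideal class $[\mathfrak{a}]\in\ker\op{Nm}_0$, the norm ideal is principal, so the Steinitz class of $\mathfrak{a}$ as an $\mathcal{O}_{K,S}$-module is trivial, and therefore $\mathfrak{a}\cong\mathcal{O}_{K,S}^2$ admits a free generator of $\bigwedge^2_{\mathcal{O}_{K,S}}\mathfrak{a}$; any such generator is a valid orientation $a$, so $[\mathfrak{a},a]\in \widetilde{\op{Pic}}$ lifts $[\mathfrak{a}]$.

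For the kernel, I would fix a representative $\mathfrak{a}$ of a class lying in $\ker\pi$ and compare orientations. If $(\mathfrak{a},a)$ and $(\mathfrak{a},b)$ are $\sim$-equivalent, there exists $\tau\in K(\Psi_\ell)^\times$ with $\tau\mathfrak{a}=\mathfrak{a}$ and $N_{K(\Psi_\ell)/K}(\tau)\cdot a=b$. Because $R_{K,S,\ell}$ is Dedekind under (R1), every fractional ideal is invertible, so $\tau\mathfrak{a}=\mathfrak{a}$ forces $\tau\in R_{K,S,\ell}^\times$. Conversely, any $\tau\in R_{K,S,\ell}^\times$ gives such an equivalence. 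Hence the fiber $\pi^{-1}([\mathfrak{a}])$ is the torsor of orientations on $\mathfrak{a}$ modulo this action; since orientations form an $\mathcal{O}_{K,S}^\times$-torsor and the action of $R_{K,S,\ell}^\times$ is through $\op{Nm}_1$, the fiber is canonically a torsor under $\op{coker}\op{Nm}_1$. Choosing the distinguished orientation coming from an $R_{K,S,\ell}$-ideal (in particular $1\in\bigwedge^2 R_{K,S,\ell}$ for the trivial class) trivializes this torsor and identifies $\ker\pi\cong \op{coker}\op{Nm}_1$, with the group structure coming from multiplication of ideals and volume forms.

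The main obstacle will be keeping track of the naturality of the orientation with respect to the group operation: one must verify that the injection $\op{coker}\op{Nm}_1\hookrightarrow \widetilde{\op{Pic}}$ defined via the trivial ideal is a homomorphism, and that its image is the kernel of $\pi$ as groups (not merely as sets). This reduces to checking that when $\tau\mathfrak{a}=\mathfrak{b}$ the induced identification of orientation torsors is compatible with the $\mathcal{O}_{K,S}^\times/\op{im}\op{Nm}_1$-action, which is routine once the Dedekind assumption from (R1) guarantees invertibility of ideals.
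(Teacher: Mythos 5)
Your proposal is correct and follows essentially the same route as the paper: the forgetful surjection onto the relative class group (using Corollary~\ref{cor:steinitz1} to identify trivial determinant with trivial norm under (R1)), and the identification of its kernel with $\op{coker}\op{Nm}_1$ via orientations of a principal ideal, with units acting through the norm. Your torsor/stabilizer phrasing just makes explicit the details the paper leaves to ``the definition of equivalence of oriented relative ideals.''
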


\begin{proof}
The proof proceeds as in \cite{busch:conjugacy}*{Proposition 3.10}.  

We first note that there is a natural group homomorphism
$$
\widetilde{\op{Pic}}(R_{K,S,\ell}/\mathcal{O}_{K,S})\to
\op{Pic}(R_{K,S,\ell})
$$ 
mapping an oriented relative ideal to its underlying $R_{K,S,\ell}$-ideal. The image consists exactly of the ideal classes in $R_{K,S,\ell}$ which have an $\mathcal{O}_{K,S}$-basis. In particular, under assumption (R1), trivial determinant and trivial norm are the same, cf. Corollary~\ref{cor:steinitz1}, and the map above induces a surjection 
$$
\widetilde{\op{Pic}}(R_{K,S,\ell}/\mathcal{O}_{K,S})\to
\op{Pic}(R_{K,S,\ell}/\mathcal{O}_{K,S}).
$$ 

We also get an injective group homomorphism 
$$
\op{coker}\left(\op{Nm}_1:R_{K,S,\ell}^\times\to
\mathcal{O}_{K,S}^\times\right)\to
\widetilde{\op{Pic}}(R_{K,S,\ell}/\mathcal{O}_{K,S})
$$
by sending  an element $u\in\mathcal{O}_{K,S}^\times$ to the oriented relative  ideal $(R_{K,S,\ell},u\wedge T)$ where $u\wedge T$  is the orientation corresponding to the $\mathcal{O}_{K,S}$-basis $(u,T)$ of $R_{K,S,\ell}$. This map will factor through the quotient $\op{coker}\op{Nm}_1$ and induce an injection as claimed  by the definition  of equivalence of oriented relative  ideals. 

Exactness in the middle also follows directly from the definition of equivalence of oriented relative ideals. 
\end{proof}

\begin{remark}
The natural origin of the short exact sequence in Proposition~\ref{prop:lmsl} 
 is the long exact sequence associated to the fiber sequence of K-theory spectra 
$$\op{hofib}\mathbf{Nm}\to \mathbf{K}(R_{K,S,\ell})\stackrel{\mathbf{Nm}}{\longrightarrow} \mathbf{K}(\mathcal{O}_{K,S})$$
 (up to a discussion getting rid of the $\mathbb{Z}$-summands in $K_0$).
\end{remark}

We have now seen how to relate the number of classes of oriented relative ideals to number-theoretic data: the possible underlying ideals are counted via the relative class group, and the possible orientations are counted via the cokernel of the norm map. It is possible to get even more information on these constituent pieces: for the relative class group, one could use  class number formulas. On the other hand, the cokernel of the norm map on units can also be understood by generalizing the  discussion of \cite{busch:conjugacy}*{section 3.2} to get the following statement.

\begin{proposition}
The cokernel $\op{coker}\op{Nm}_1(R_{K,S,\ell}/\mathcal{O}_{K,S})$ of the norm-map on units is a $\mathbb{Z}/2\mathbb{Z}$-module whose rank equals  the number of inert places of the extension $R_{K,S,\ell}/\mathcal{O}_{K,S}$.
\end{proposition}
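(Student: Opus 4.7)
The plan is to first observe that the cokernel is $2$-torsion, and then to pin down its $\mathbb{F}_2$-rank by a local-global analysis. For the $2$-torsion part, every $u \in \mathcal{O}_{K,S}^\times$, viewed as an element of $R_{K,S,\ell}^\times$ through the inclusion $\mathcal{O}_{K,S} \subseteq R_{K,S,\ell}$, is fixed by the involution $\iota$; hence $\op{Nm}_1(u) = u \cdot \iota(u) = u^2$, so $(\mathcal{O}_{K,S}^\times)^2 \subseteq \op{im}\op{Nm}_1$ and $\op{coker}\op{Nm}_1$ is an $\mathbb{F}_2$-vector space.

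For the rank count, set $G = \op{Gal}(K(\Psi_\ell)/K) = \langle \iota \rangle \cong \mathbb{Z}/2\mathbb{Z}$ and rewrite the cokernel as the Tate cohomology $\widehat{\op{H}}^0(G, R_{K,S,\ell}^\times)$. Under assumption (R2), $R_{K,S,\ell}$ is the induced $G$-module $\mathbb{Z}[G] \otimes \mathcal{O}_{K,S}$, whose Tate cohomology vanishes identically, consistent with there being no inert places in this case. Under assumption (R1), I would apply the long exact Tate cohomology sequence to the four-term exact sequence of $G$-modules
$$
1 \to R_{K,S,\ell}^\times \to K(\Psi_\ell)^\times \to \op{Div}_{\tilde{S}}(K(\Psi_\ell)) \to \op{Pic}(R_{K,S,\ell}) \to 0,
$$
split through the intermediate module of principal $\tilde{S}$-divisors. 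Hilbert~90 kills $\widehat{\op{H}}^i(G, K(\Psi_\ell)^\times)$ in odd degrees $i$, the finiteness of $\op{Pic}(R_{K,S,\ell})$ forces its Herbrand quotient to equal $1$, and the even-degree Tate cohomology of $K(\Psi_\ell)^\times$ is controlled by global class field theory.

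The combinatorial heart of the argument is the Tate cohomology of the permutation module $\op{Div}_{\tilde{S}}$, which decomposes as a direct sum over places $v$ of $K$ outside $S$: split $v$ contributes the induced module $\mathbb{Z}[G]$, with zero Tate cohomology, while inert and ramified $v$ each contribute a trivial $\mathbb{Z}$ with $\widehat{\op{H}}^0(G, \mathbb{Z}) = \mathbb{Z}/2\mathbb{Z}$. Tracing through the long exact sequence, the contributions at ramified places are absorbed by the surjectivity of the local norm on units for a tamely ramified quadratic extension of local fields of residue characteristic $\neq 2$, while the contributions at inert places survive to give one factor of $\mathbb{Z}/2\mathbb{Z}$ each, matching the asserted rank.

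The main obstacle will be the bookkeeping around ramified places and the precise interaction of the global connecting homomorphism with local class field theory — specifically, verifying that the $\mathbb{Z}/2\mathbb{Z}$-contribution of each ramified place to $\widehat{\op{H}}^0(G, \op{Div}_{\tilde{S}})$ is indeed cancelled by the image of the connecting map coming from the principal-divisor side, so that only inert places survive in the final cokernel. A cleaner alternative, closer to the approach in \cite{busch:conjugacy}*{section 3.2}, is a direct Dirichlet-unit-theorem rank count: one computes the image of $\op{Nm}_1$ on free parts by type of place (split yields a surjective sum map, inert yields multiplication by $2$, ramified yields the identity) and reads off that the $\mathbb{F}_2$-cokernel has exactly one factor per inert place of $R_{K,S,\ell}/\mathcal{O}_{K,S}$.
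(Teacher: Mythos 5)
First, a point of reference: the paper offers no proof of this proposition at all --- it is asserted as a generalization of \cite{busch:conjugacy}*{section 3.2} --- so there is no in-paper argument to measure yours against; it must stand on its own. The parts of your proposal that are carried out are fine: the norm restricted to $\mathcal{O}_{K,S}^\times$ is squaring, so the cokernel is an $\mathbb{F}_2$-vector space; the cokernel is $\widehat{\op{H}}^0(G,R_{K,S,\ell}^\times)$ for $G=\langle\iota\rangle$; and under (R2) the module is induced, so everything vanishes. But the rank count, which is the entire content of the statement, is not established, and the sketch of how it would go contains concrete errors. The local norm on units of a \emph{tamely ramified} quadratic extension of local fields of odd residue characteristic is not surjective --- it has index $2$ (surjectivity on units is the \emph{unramified} case) --- so your proposed mechanism for absorbing ramified places fails as stated. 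Worse, the bookkeeping conflates places inside and outside $S$: the module $\op{Div}_{\tilde S}$ is supported on the infinitely many places \emph{outside} $\tilde S$, infinitely many of which are inert (Chebotarev), so ``one surviving $\mathbb{Z}/2$ per inert place of $\op{Div}_{\tilde S}$'' cannot produce the finite answer; those classes must instead cancel against the infinite group $\widehat{\op{H}}^0(G,K(\Psi_\ell)^\times)=K^\times/\op{Nm}(K(\Psi_\ell)^\times)$, and the places that actually govern $\op{coker}\op{Nm}_1$ are the non-split places \emph{in} $S$. These include the real places of $K$ becoming complex in $K(\Psi_\ell)$ (already for $K=\mathbb{Q}$, $\ell=3$, $S=S_\infty$ the cokernel is $\mu_2/\op{Nm}(\mu_6)\cong\mathbb{Z}/2$, detected by a sign), and your split/inert/ramified trichotomy never says how to classify archimedean places or how the torsion $\mu_K$ enters.

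The ``cleaner alternative'' does not repair this, because the $S$-unit lattice of $R_{K,S,\ell}$ is not a direct sum of local pieces indexed by places of $K$, so the image of $\op{Nm}_1$ cannot be computed coordinatewise by the rule ``split $=$ sum, inert $=$ times $2$, ramified $=$ identity''. For example, take $K=\mathbb{Q}(\sqrt5)$, $\ell=5$, $S=S_\infty\cup\{(\sqrt5)\}$: with $\epsilon=\tfrac{1+\sqrt5}{2}$ one has $\op{Nm}_1(1-\zeta_5)=\sqrt5\cdot\epsilon^{-1}$, so the generator attached to the ramified place is a norm only modulo a unit attached to the archimedean places; the cokernel is $(\mathbb{Z}/2)^2$ and the count comes out right only because of this interaction, not by a placewise computation. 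What a complete proof must supply, and what both of your routes leave open, is control of $\widehat{\op{H}}^1(G,R_{K,S,\ell}^\times)$: the Herbrand-quotient formula for $S$-units gives
$$
\bigl|\widehat{\op{H}}^0(G,R_{K,S,\ell}^\times)\bigr|\big/\bigl|\widehat{\op{H}}^1(G,R_{K,S,\ell}^\times)\bigr|=\tfrac12\prod_{v\in S}|G_v|=2^{d-1},
$$
with $d$ the number of non-split places of $K$ in $S$, so the asserted rank is exactly equivalent to a statement about $\widehat{\op{H}}^1$ (equivalently, about invariant principal ideals) that is nowhere verified --- and which is also where one would discover how the statement must be read when $S$ contains several ramified places.
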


\subsection{Oriented class groups in the case  $\zeta_\ell\in K$}

Understanding the oriented class group in the case $\zeta_\ell\in K$ is easier. 

\begin{proposition}
\label{prop:picsplit}
Under assumption (R2), we have an isomorphism
$$
\widetilde{\op{Pic}}(R_{K,S,\ell}/\mathcal{O}_{K,S})\cong \op{Pic}(\mathcal{O}_{K,S}). 
$$
\end{proposition}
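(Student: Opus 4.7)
The plan is to exploit the ring isomorphism $R_{K,S,\ell}\cong\mathcal{O}_{K,S}\times\mathcal{O}_{K,S}$ from Proposition~\ref{prop:split} (which applies under (R2)) to translate oriented relative ideal classes into ordinary $\mathcal{O}_{K,S}$-ideal classes. Under this identification, $K(\Psi_\ell)=K\times K$, and every fractional $R_{K,S,\ell}$-ideal $\mathfrak{a}$ splits canonically as $\mathfrak{a}=I\oplus J$ for fractional $\mathcal{O}_{K,S}$-ideals $I$ (the $\zeta_\ell$-component) and $J$ (the $\zeta_\ell^{-1}$-component). By the Steinitz class theorem for projective modules over a Dedekind domain, $\mathfrak{a}$ is free of rank two as an $\mathcal{O}_{K,S}$-module if and only if $[I][J]=1$ in $\op{Pic}(\mathcal{O}_{K,S})$; in that case $\bigwedge^2_{\mathcal{O}_{K,S}}\mathfrak{a}\cong I\otimes J$ is a principal fractional ideal and admits a generator.

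I would then define the candidate isomorphism $\Phi\colon\widetilde{\op{Pic}}(R_{K,S,\ell}/\mathcal{O}_{K,S})\to\op{Pic}(\mathcal{O}_{K,S})$ by $[I\oplus J,a]\mapsto [I]$. Well-definedness and multiplicativity are routine: scaling by $\tau=(\tau_1,\tau_2)\in K^\times\times K^\times$ sends $I$ to $\tau_1 I$ and $J$ to $\tau_2 J$, so $[I]$ is preserved, and ideal multiplication in $R_{K,S,\ell}$ corresponds to componentwise multiplication of the factors. For surjectivity, given any class $[I]\in\op{Pic}(\mathcal{O}_{K,S})$, the pair $\mathfrak{a}=I\oplus I^{-1}$ satisfies $[I][I^{-1}]=1$, hence is free of rank two over $\mathcal{O}_{K,S}$ and admits an orientation $a$; then $\Phi[\mathfrak{a},a]=[I]$.

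The main step to focus on is injectivity. Assume $\Phi[\mathfrak{a},a]=1$, so $I$ is principal, say $I=\alpha\mathcal{O}_{K,S}$; then $[J]=[I]^{-1}=1$ as well, so $J=\beta\mathcal{O}_{K,S}$. Scaling by $\tau=(\alpha^{-1},\beta^{-1})\in K(\Psi_\ell)^\times$ reduces $(\mathfrak{a},a)$ to an oriented relative ideal of the form $(R_{K,S,\ell},a')$. It remains to show that any two orientations on $R_{K,S,\ell}$ yield equivalent oriented relative ideals. Any orientation differs from the canonical one $1\wedge T$ by a unit $u\in\mathcal{O}_{K,S}^\times$, and equivalence on the trivial ideal is controlled by the image of $\op{Nm}_1\colon R_{K,S,\ell}^\times\to\mathcal{O}_{K,S}^\times$. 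Under the isomorphism $R_{K,S,\ell}^\times\cong\mathcal{O}_{K,S}^\times\times\mathcal{O}_{K,S}^\times$, this norm map is simply $(\tau_1,\tau_2)\mapsto\tau_1\tau_2$, which is visibly surjective (take $\tau_2=1$). Hence every orientation of the trivial ideal is equivalent to $1\wedge T$, which proves injectivity and completes the proof.

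I do not expect any real obstacle beyond bookkeeping: the content is carried entirely by the splitting of $R_{K,S,\ell}$, and the only potentially subtle point is identifying the equivalence relation on the trivial ideal with the cokernel of $\op{Nm}_1$, which becomes transparent once one observes that $\op{Nm}_1$ factors as the multiplication map on $\mathcal{O}_{K,S}^\times\times\mathcal{O}_{K,S}^\times$.
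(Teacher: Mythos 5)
Your proposal is correct and follows essentially the same route as the paper's own proof: exploit the splitting $R_{K,S,\ell}\cong\mathcal{O}_{K,S}\times\mathcal{O}_{K,S}$ from Proposition~\ref{prop:split} to identify ideals with pairs $(I,J)$ with $[I][J]=1$, map to $[I]$, and settle injectivity by observing that the norm on units is the (surjective) multiplication map $(\mathcal{O}_{K,S}^\times)^2\to\mathcal{O}_{K,S}^\times$, so all orientations of the trivial ideal are equivalent. The only difference is that you spell out the Steinitz-class justification and the explicit map, which the paper leaves implicit.
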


\begin{proof}
The most important ingredient is Proposition~\ref{prop:split}, which implies that under assumption (R2), we have $R_{K,S,\ell}\cong\mathcal{O}_{K,S}\times\mathcal{O}_{K,S}$ as rings. 
In particular, ideal classes of $R_{K,S,\ell}$ are in bijection with pairs of ideal classes in $\mathcal{O}_{K,S}$, 
and an ideal has an $\mathcal{O}_{K,S}$-basis if and only if it is equivalent to one of the form $(\mathcal{I},\mathcal{I}^{-1})$.
We therefore get a surjection $\widetilde{\op{Pic}}(R_{K,S,\ell}/\mathcal{O}_{K,S})\to \op{Pic}(\mathcal{O}_{K,S})$. 
To show injectivity, we look at the possible orientations of the trivial ideal class. These are given by generators of the determinant, so they differ by units in $\mathcal{O}_{K,S}$. Now scaling with a unit in the image of the norm map $R_{K,S,\ell}^\times\to\mathcal{O}_{K,S}^\times$ does not change the orientation. However, the norm map is simply the multiplication map $R_{K,S,\ell}^\times\cong(\mathcal{O}_{K,S}^\times)^2\to\mathcal{O}_{K,S}^\times$. Surjectivity of the norm map then implies that all orientations are equivalent, hence we get the isomorphism as claimed.
\end{proof}

\begin{remark}
It is still possible to describe the oriented class group in case (R2) is not satisfied. In this case, we have to restrict to those oriented ideals whose underlying ideals are invertible. The resulting class group is then given by an extension
$$
0\to
\op{coker}\left(\mathcal{O}_{K,S}^\times\to \left(\mathcal{O}_{K,S}/(\zeta_\ell-\zeta_\ell^{-1})\right)^\times\right)\to \op{Pic}(R_{K,S,\ell})\to\op{Pic}(\mathcal{O}_{K,S})\to 0.
$$
This is basically a form of Milnor patching for projective modules in the fiber square of Proposition~\ref{prop:split}, with the cokernel-of-reduction on units classifying the possible gluing data. The exact sequence gives rise to a version of Dedekind's formula for class groups of orders. In this case, there can also be non-trivial orientations coming from inert places over $(\zeta_\ell-\zeta_\ell^{-1})$. However, the most problematic part of understanding oriented relative ideals is the possible non-invertibility of ideals as discussed in  Remark~\ref{rem:noninv2}. This is the reason for staying away from this sort of cases altogether.
\end{remark}

\section{Conjugacy classification of finite cyclic subgroups and descriptions of normalizers}
\label{sec:conjugacy2}

In Section~\ref{sec:conjugacy1}, we recalled the conjugacy classification of elements of order $\ell$ in $\op{SL}_2(\mathcal{O}_{K,S})$ with characteristic polynomial $\Psi_\ell(T)$. What remains to be done is the description of the conjugacy classification of \emph{subgroups} of order $\ell$ and the description of their centralizers and normalizers. 
The difference between the classification of finite order elements and the classification of finite cyclic subgroups is completely controlled by the action of the ``Galois group'' 
$\op{Gal}(K(\Psi_\ell)/K)\cong\mathbb{Z}/2\mathbb{Z}$. Again, the statements for $\op{SL}_2(\mathbb{Z}[1/n])$ can be found in \cite{busch:conjugacy}, and we provide some necessary augmentations to deal with the general case $\op{SL}_2(\mathcal{O}_{K,S})$. 

In this section, we continue to use the notation set up at the beginning of Section~\ref{sec:conjugacy1}. We will denote by $\mathcal{K}_{K,S,\ell}$ the set of conjugacy classes of finite cyclic subgroups of order $\ell$ in $\op{SL}_2(\mathcal{O}_{K,S})$, and write $\mathcal{K}_\ell$ if the number ring is clear from the context. If we have any cyclic subgroup $\Gamma$ of order $\ell$ in $\op{SL}_2(\mathcal{O}_{K,S})$, then for any primitive $\ell$-th root of unity $\zeta$ there will be an element of $\Gamma$ having characteristic polynomial $\Psi_\ell(T)=T^2-(\zeta+\zeta^{-1})T+1$. In particular,  $\mathcal{K}_\ell$ is a quotient of $\mathcal{C}_\ell$, and the difference appears whenever, for an element $g$, the normalizer $N$ of the subgroup $\langle g\rangle$ acts non-trivially on this subgroup. 

\subsection{Centralizers and norm one units}

We first consider the centralizers of elements of finite order. Again, we have to distinguish between the cases where $K(\Psi_\ell)$ is an extension field of $L$ or where $K(\Psi_\ell)\cong K\times K$. 

\begin{proposition}
\label{prop:centralizer1}
Assume $K(\Psi_\ell)$ is a field and that condition (R1) is satisfied. If $\rho$ is an element of order $\ell$ and $M\in\op{SL}_2(\mathcal{O}_{K,S})$ centralizes $\langle\rho\rangle$, then $M$ is given by multiplication by a norm-one unit $u\in\ker\op{Nm}_1\subseteq R_{K,S,\ell}^\times$. In particular, we have 
$$
C_{\op{SL}_2(\mathcal{O}_{K,S})}(\langle\rho\rangle)\cong
\ker\left(R_{K,S,\ell}^\times
\stackrel{\op{Nm}_1}{\longrightarrow}\mathcal{O}_{K,S}^\times\right). 
$$
\end{proposition}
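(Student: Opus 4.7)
The plan is to use the correspondence of Proposition~\ref{prop:biject} to convert the matrix problem into a problem about multipliers on a fractional ideal. Choose an isomorphism $\phi:K^2\to K(\Psi_\ell)$ of $K$-vector spaces which identifies the action of $\rho$ on $K^2$ with multiplication by $\zeta_\ell$ on $K(\Psi_\ell)$. Under this identification, the standard lattice $\mathcal{O}_{K,S}^2\subseteq K^2$ becomes a fractional $R_{K,S,\ell}$-ideal $\mathfrak{a}\subseteq K(\Psi_\ell)$, and centralizing $\rho$ translates to commuting with multiplication by $\zeta_\ell$.

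Next I would observe that any $K$-linear endomorphism of $K(\Psi_\ell)$ commuting with multiplication by $\zeta_\ell$ is itself multiplication by some element $u\in K(\Psi_\ell)$: since $K(\Psi_\ell)=K[\zeta_\ell]$, the centralizer of $\zeta_\ell$ in $\op{End}_K(K(\Psi_\ell))$ is exactly the image of the regular representation of $K(\Psi_\ell)$ (this is a standard Schur-style computation, or the statement that $K(\Psi_\ell)$ is its own commutant acting on itself by multiplication). Hence any centralizing $M\in\op{SL}_2(\mathcal{O}_{K,S})$ corresponds to $\op{m}_u$ for some $u\in K(\Psi_\ell)^\times$, the invertibility coming from $M\in\op{GL}_2$.

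Now I apply the regularity hypothesis (R1): by Proposition~\ref{prop:intbasis}, $R_{K,S,\ell}$ is a Dedekind domain, so every nonzero fractional ideal is invertible. The condition $M\cdot\mathcal{O}_{K,S}^2=\mathcal{O}_{K,S}^2$ translates to $u\mathfrak{a}=\mathfrak{a}$, and multiplying by $\mathfrak{a}^{-1}$ forces $u\in R_{K,S,\ell}^\times$. Finally, the determinant of multiplication by $u$ on $K(\Psi_\ell)$ is precisely the norm $\op{Nm}_{K(\Psi_\ell)/K}(u)$, so $\det M=1$ is equivalent to $u\in\ker\op{Nm}_1$. Conversely, any $u\in\ker\op{Nm}_1\subseteq R_{K,S,\ell}^\times$ preserves $\mathfrak{a}$ and has determinant one, so gives a matrix in $\op{SL}_2(\mathcal{O}_{K,S})$ centralizing $\rho$. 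The assignment $u\mapsto\op{m}_u$ is clearly injective as a ring endomorphism of a field has no nontrivial kernel, so one obtains the claimed isomorphism $C_{\op{SL}_2(\mathcal{O}_{K,S})}(\langle\rho\rangle)\cong\ker\op{Nm}_1$; note that centralizing $\rho$ is the same as centralizing $\langle\rho\rangle$ since $\rho$ generates that subgroup.

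The only delicate point is the passage from ``multiplier preserves $\mathfrak{a}$'' to ``multiplier lies in $R_{K,S,\ell}^\times$'': in general, for a non-maximal order one would only get that $u$ lies in the multiplier ring $(\mathfrak{a}:\mathfrak{a})$, which can strictly contain $R_{K,S,\ell}$ when $\mathfrak{a}$ is not invertible. Condition (R1) is exactly what prevents this pathology, since it ensures $R_{K,S,\ell}$ is the full ring of integers in $K(\Psi_\ell)$ and all its fractional ideals are invertible, making this step the principal use of the regularity hypothesis.
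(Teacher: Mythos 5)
Your argument is correct and follows the same overall strategy as the paper: transport $\rho$ to multiplication by $\zeta_\ell$ on a fractional ideal $\mathfrak{a}$ via the correspondence of Proposition~\ref{prop:biject}, identify the centralizer with multiplication operators, and use that the determinant of $\op{m}_u$ is the norm of $u$. The one genuine difference is in how you justify that a centralizing matrix is a multiplication operator: the paper invokes the theory of algebraic groups (a matrix centralizing the maximal torus of multiplications by $K(\Psi_\ell)^\times$ inside $\op{GL}_2(K)$ must itself lie in that torus), whereas you give the elementary bicommutant computation that $K(\Psi_\ell)=K[\zeta_\ell]$ is its own commutant in $\op{End}_K(K(\Psi_\ell))$; these are equivalent here, and yours is the more self-contained route. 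You also make explicit a step the paper's proof passes over silently, namely why the multiplier $u$ lies in $R_{K,S,\ell}^\times$ rather than merely in $K(\Psi_\ell)^\times$ with norm one: your deduction from $u\mathfrak{a}=\mathfrak{a}$ via invertibility of $\mathfrak{a}$ in the Dedekind ring guaranteed by (R1) and Proposition~\ref{prop:intbasis} is exactly the right argument, and correctly identifies this as the place where the regularity hypothesis enters (for a non-invertible ideal one would only land in the multiplier ring $(\mathfrak{a}:\mathfrak{a})$). This is a worthwhile clarification rather than a deviation.
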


\begin{proof}
Under the correspondence set up in Section~\ref{sec:conjugacy1} (with $\zeta_\ell$ chosen such that $\Psi_\ell(T)=T^2-(\zeta_\ell+\zeta_\ell^{-1})T+1$ is the characteristic polynomial of $\rho$), the element $\rho$ corresponds to an oriented relative ideal $(\mathfrak{a},a)$, where $\mathfrak{a}\subseteq K(\Psi_\ell)$ is an invertible fractional $R_{K,S,\ell}$-ideal and $a\in\bigwedge_{\mathcal{O}_{K,S}}^2\mathfrak{a}$. The element $\rho$ is represented as multiplication by $\zeta_\ell$ on $\mathfrak{a}$. Since the ring $R_{K,S,\ell}$ is generated by $\zeta_\ell$ and elements from $\mathcal{O}_{K,S}$, any matrix $M$ that commutes with multiplication with $\zeta_\ell$ necessarily commutes with multiplication with any element from $R_{K,S,\ell}$. We can then consider the associated algebraic group $\op{GL}_2(K)$ acting on the two-dimensional $K$-vector space $K(\Psi_\ell)$ in which the matrices representing multiplication with elements of $K(\Psi_\ell)$ form a maximal torus. Therefore, we see that any matrix $M\in\op{SL}_2(\mathcal{O}_{K,S})$ which commutes with $\zeta_\ell$ centralizes a maximal torus of $\op{GL}_2(K)$ after embedding $\op{SL}_
2(\mathcal{O}_{K,S})\subseteq\op{GL}_2(K)$. From the theory of algebraic groups, we see that, as an element of $\op{GL}_2(K)$, $M$ must itself be an element of the maximal torus, hence it necessarily is multiplication with a unit. The determinant of multiplication with a unit is given by the norm of the unit. Therefore, if an element $M\in\op{SL}_2(\mathcal{O}_{K,S})$ centralizes $\zeta_\ell$, then it is given by multiplication with a norm-one unit.
\end{proof}

\begin{proposition}
\label{prop:centralizer2}
Assume $K(\Psi_\ell)\cong K\times K$ and that condition (R2) is satisfied. If $\rho$ is an element of order $\ell$ and  $M\in\op{SL}_2(\mathcal{O}_{K,S})$ centralizes  $\langle\rho\rangle$, then $M$ is given by multiplication by a diagonal matrix $(u,u^{-1})$ with $u\in\mathcal{O}_{K,S}^\times$.  In particular, we have 
$$
C_{\op{SL}_2(\mathcal{O}_{K,S})}(\langle\rho\rangle)\cong\mathcal{O}_{K,S}^\times.
$$
\end{proposition}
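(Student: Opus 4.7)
The plan is to mirror the argument of Proposition~\ref{prop:centralizer1}, replacing the non-split torus by a split one. Under condition (R2), Proposition~\ref{prop:split} gives a ring isomorphism $R_{K,S,\ell}\cong\mathcal{O}_{K,S}\times\mathcal{O}_{K,S}$, and via the bijection of Proposition~\ref{prop:biject} the element $\rho$ corresponds to an oriented relative ideal $(\mathfrak{a},a)$ on which $\rho$ acts by multiplication with $\zeta_\ell\in R_{K,S,\ell}$.

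First I would observe that because $R_{K,S,\ell}$ is generated as an $\mathcal{O}_{K,S}$-algebra by $\zeta_\ell$, any matrix $M\in\op{SL}_2(\mathcal{O}_{K,S})$ that commutes with $\rho$ automatically commutes with the full $R_{K,S,\ell}$-module structure on $K(\Psi_\ell)\cong K\times K$. The image of $K(\Psi_\ell)^\times$ in $\op{GL}_2(K)$, acting by multiplication on the $2$-dimensional $K$-vector space $K(\Psi_\ell)$, is a split maximal torus $T$. Moreover $\zeta_\ell$ is a regular element of $T$ (its two eigenvalues $\zeta_\ell$ and $\zeta_\ell^{-1}$ are distinct because $\ell$ is odd). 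By standard algebraic group theory the centralizer of a regular element of a maximal torus equals the torus itself, so $M$ must act on $K(\Psi_\ell)$ by multiplication with some element $(u_1,u_2)\in K^\times\times K^\times$.

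Next I would impose the integrality and determinant conditions. Since $M$ preserves the $\mathcal{O}_{K,S}$-lattice $\mathfrak{a}$, the element $(u_1,u_2)$ must lie in $R_{K,S,\ell}^\times\cong(\mathcal{O}_{K,S}^\times)^2$. The determinant of multiplication by $(u_1,u_2)$ on the two-dimensional $K$-vector space $K\times K$ is the norm $N_{K(\Psi_\ell)/K}(u_1,u_2)=u_1u_2$, so the condition $\det M=1$ becomes $u_2=u_1^{-1}$. After choosing the $\mathcal{O}_{K,S}$-basis of $\mathfrak{a}$ coming from the idempotent decomposition, this says $M=\op{diag}(u,u^{-1})$ with $u\in\mathcal{O}_{K,S}^\times$, and the assignment $u\mapsto\op{diag}(u,u^{-1})$ gives the desired isomorphism $C_{\op{SL}_2(\mathcal{O}_{K,S})}(\langle\rho\rangle)\cong\mathcal{O}_{K,S}^\times$.

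The main (and really only) obstacle is bookkeeping: confirming that the bijection of Proposition~\ref{prop:biject}, together with the splitting of Proposition~\ref{prop:split}, does produce an $\mathcal{O}_{K,S}$-basis of $\mathfrak{a}$ in which the idempotents of $\mathcal{O}_{K,S}\times\mathcal{O}_{K,S}$ act diagonally; once this is set up, the algebraic-group step is verbatim the same as in the field case of Proposition~\ref{prop:centralizer1}, and the identification of the kernel of the split norm map $(a,b)\mapsto ab$ with $\mathcal{O}_{K,S}^\times$ is immediate.
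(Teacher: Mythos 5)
Your proposal is correct and follows essentially the same route as the paper: both arguments use that $\zeta_\ell$ (equivalently $T$) generates $R_{K,S,\ell}$ over $\mathcal{O}_{K,S}$, so a centralizing matrix commutes with a split maximal torus of the ambient algebraic group and must therefore lie in that torus, after which integrality and the determinant-one condition force the form $\op{diag}(u,u^{-1})$ with $u\in\mathcal{O}_{K,S}^\times$. Your version merely spells out the lattice-preservation and norm bookkeeping that the paper leaves implicit.
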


\begin{proof}
As before, the element $T$ generates $R_{K,S,\ell}$ as $\mathcal{O}_{K,S}$-algebra. Therefore, any matrix $M\in\op{SL}_2(\mathcal{O}_{K,S})$ commuting with multiplication by $T$ will commute with multiplication by any element from $R_{K,S,\ell}$. 
Passing to the algebraic group $\op{SL}_2(K)$, such a matrix $M$ commuting with multiplication by $T$ again commutes with the whole maximal torus of $\op{SL}_2(K)$. Therefore, it must already be a diagonal matrix, hence of the form $\op{diag}(a,a^{-1})$ for $a\in R_{K,S,\ell}^\times$. 
\end{proof}

\subsection{Normalizers, dihedral overgroups and Galois action}

Next, we discuss normalizers of subgroups of finite order in $\op{SL}_2(\mathcal{O}_{K,S})$, as these are relevant data for the computation of Farrell--Tate cohomology. Looking at the proofs of Propositions~\ref{prop:centralizer1} and \ref{prop:centralizer2}, we get the following result: 

\begin{proposition}
\label{prop:aut} 
In the situation of Propositions~\ref{prop:centralizer1} and \ref{prop:centralizer2}, 
the Weyl group of the subgroup $\langle\rho\rangle$ generated by $\rho$ is given by
$$
N_{\op{SL}_2(\mathcal{O}_{K,S})}(\langle\rho\rangle)/C_{\op{SL}_2(\mathcal{O}_{K,S})}(\langle\rho\rangle)\cong 
  \op{Stab}_{\op{Gal}(K(\Psi_\ell)/K)}(\mathcal{I}_\rho),
$$
where  $\mathcal{I}_\rho$ is the ideal class corresponding to the element $\rho$.  
\end{proposition}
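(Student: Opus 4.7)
The plan is to first cut the Weyl group down to a subgroup of $\mathbb{Z}/2\mathbb{Z}$ by a character-theoretic argument, and then to detect exactly when this quotient is non-trivial by tracking through the bijection of Proposition~\ref{prop:biject} under the Galois involution $\iota$.

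First I would observe that the conjugation action of $N=N_{\op{SL}_2(\mathcal{O}_{K,S})}(\langle\rho\rangle)$ on $\langle\rho\rangle$ gives a homomorphism $N\to \op{Aut}(\langle\rho\rangle)\cong(\mathbb{Z}/\ell\mathbb{Z})^\times$ whose kernel is, by definition, the centralizer $C=C_{\op{SL}_2(\mathcal{O}_{K,S})}(\langle\rho\rangle)$. Because conjugation preserves characteristic polynomials, the image lands in the subset of those $k$ with $\zeta_\ell^k+\zeta_\ell^{-k}=\zeta_\ell+\zeta_\ell^{-1}$, namely $\{\pm 1\}$. Hence $N/C\hookrightarrow\mathbb{Z}/2\mathbb{Z}$, and the whole question reduces to deciding whether $\rho$ and $\rho^{-1}$ lie in the same conjugacy class in $\op{SL}_2(\mathcal{O}_{K,S})$.

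Next I would trace this through the correspondence of Proposition~\ref{prop:biject}. If $\rho$ corresponds to the oriented relative ideal class $[\mathfrak{a},a]$ via multiplication by $\zeta_\ell$ on $\mathfrak{a}$, then $\rho^{-1}$ is represented by multiplication by $\zeta_\ell^{-1}$ on $\mathfrak{a}$. To bring this back into the framework of Proposition~\ref{prop:biject}, where the distinguished action of $R_{K,S,\ell}$ is multiplication by $\zeta_\ell$, one pre-composes with the involution $\iota$; the resulting $R_{K,S,\ell}$-module is the Galois twist of $\mathfrak{a}$, and tracking the orientation through the same $\iota$-twist shows that $\rho^{-1}$ corresponds to the Galois-conjugate oriented relative ideal class $\iota(\mathcal{I}_\rho)$. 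Combining with the first step: $N/C\cong\mathbb{Z}/2\mathbb{Z}$ exactly when $\mathcal{I}_\rho=\iota(\mathcal{I}_\rho)$, and $N/C$ is trivial otherwise; this is precisely $\op{Stab}_{\op{Gal}(K(\Psi_\ell)/K)}(\mathcal{I}_\rho)$.

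The main delicate point will be the behaviour of the orientation under $\iota$: the underlying ideal $\mathfrak{a}$ transforms in an obvious way, but the orientation $a\in\bigwedge^2_{\mathcal{O}_{K,S}}\mathfrak{a}$ is twisted by a norm factor that must be shown to trivialize in $\widetilde{\op{Pic}}(R_{K,S,\ell}/\mathcal{O}_{K,S})$. Both cases (R1) and (R2) require this verification, but in each the argument is the same: $\iota$ is an $\mathcal{O}_{K,S}$-algebra involution and the norm factors introduced lie in the image of $\op{Nm}_1$, hence become trivial in the oriented class group used for the correspondence.
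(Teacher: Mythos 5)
Your proposal is correct and follows essentially the same route as the paper: both reduce the question to whether $\rho$ and $\rho^{-1}$ are conjugate in $\op{SL}_2(\mathcal{O}_{K,S})$ and then translate this, via the bijection of Proposition~\ref{prop:biject} and the $\iota$-twist of oriented ideals as in Proposition~\ref{prop:galois}, into $\op{Gal}(K(\Psi_\ell)/K)$-invariance of $\mathcal{I}_\rho$. The only (minor) divergence is the first reduction, where the paper obtains $N/C\hookrightarrow\mathbb{Z}/2\mathbb{Z}$ from the structure $K^\times\rtimes\mathbb{Z}/2\mathbb{Z}$ of the normalizer of a maximal torus of $\op{SL}_2(K)$ while you use the more elementary characteristic-polynomial constraint on the image of $N\to\op{Aut}(\langle\rho\rangle)$; also, your worry about norm factors in the orientation is unfounded, since the Galois action on oriented classes is given directly by $(\mathfrak{a},x_1\wedge x_2)\mapsto(\iota(\mathfrak{a}),\iota(x_1)\wedge\iota(x_2))$ with no correction needed.
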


\begin{proof}
In each of the cases of Propositions~\ref{prop:centralizer1} resp. \ref{prop:centralizer2}, 
we see that an element centralizing $\rho$ must centralize a maximal torus. 
Similarly, if an element normalizes the subgroup $\langle\rho\rangle$, then it already normalizes a maximal torus of $\op{SL}_2(K)$. 
The theory of algebraic groups tells us that the normalizer of a maximal torus of $\op{SL}_2(K)$ is of the form 
$K^\times\rtimes\mathbb{Z}/2\mathbb{Z}$, with the finite group quotient acting by inversion on $K^\times$. 
In particular, if an element normalizes $\langle\rho\rangle$, but does not leave $\rho$ invariant, 
then it must map $\rho$ to $\rho^{-1}$. 
Then necessarily, we have $\mathcal{I}_\rho\cong\mathcal{I}_\rho^{\sigma}$, i.e., 
the ideal class $\mathcal{I}$ in $\widetilde{\op{Pic}}(R_{K,S,\ell}/\mathcal{O}_{K,S})$ is invariant under the action of 
$\op{Gal}(K(\Psi_\ell)/K)\cong\mathbb{Z}/2\mathbb{Z}$. 
On the other hand, if the Galois action does not leave the ideal class $\mathcal{I}_\rho$ invariant, then 
$\rho$ and $\rho^{-1}$ cannot be conjugate, hence normalizer and centralizer agree in this case.
\end{proof}

\begin{corollary}
\label{cor:dihedral}
In the situation of Proposition~\ref{prop:aut}, a cyclic subgroup
$\Gamma=\langle\rho\rangle$ of order $\ell$ in $\op{SL}_2(\mathcal{O}_{K,S})$ is embeddable in a dihedral subgroup of $\op{SL}_2(\mathcal{O}_{K,S})$ precisely when the associated ideal $\mathcal{I}_\rho$ is invariant under the action of
$\op{Gal}(K(\Psi_\ell)/K)$. 

The conjugacy classes of dihedral overgroups of $\Gamma$ are in bijection with the number of orientations of 
$\mathcal{I}^{-1}\otimes\mathcal{I}^\sigma$, which in turn is in bijection with the group 
$$
\ker\left(\op{Nm}_1:R_{K,S,\ell}^\times\to \mathcal{O}_{K,S}^\times\right)\otimes_{\mathbb{Z}}\mathbb{Z}/2\mathbb{Z}
$$ 
of square residues of norm 1 units.
\end{corollary}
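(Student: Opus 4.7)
The plan is to derive both parts of the corollary from Proposition~\ref{prop:aut} by a direct analysis of the flip elements in $N_{\op{SL}_2(\mathcal{O}_{K,S})}(\Gamma)$.

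For the first assertion, I would observe that the presence of a dihedral overgroup $D\supseteq\Gamma$ in $\op{SL}_2(\mathcal{O}_{K,S})$ is equivalent to the existence of an element $g$ normalizing but not centralizing $\Gamma$: any ``flip'' in a dihedral overgroup must conjugate $\rho$ to $\rho^{-1}$, while conversely any element of $N(\Gamma)\setminus C(\Gamma)$ acts on the cyclic group $\Gamma$ of prime order by the unique non-trivial automorphism, namely inversion, so that $\langle\Gamma,g\rangle$ has dihedral structure. By Proposition~\ref{prop:aut}, this Weyl group $N(\Gamma)/C(\Gamma)$ is non-trivial precisely when $\mathcal{I}_\rho$ is fixed by $\op{Gal}(K(\Psi_\ell)/K)$.

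Assume this Galois-invariance. Write $C=C_{\op{SL}_2(\mathcal{O}_{K,S})}(\Gamma)\cong\ker\op{Nm}_1$ and fix one flip $g$, so $N(\Gamma)=C\sqcup gC$ with $gcg^{-1}=c^{-1}$ for $c\in C$. Every dihedral overgroup has the form $\langle\Gamma,gc\rangle$ for some $c\in C$, and two such overgroups coincide as subsets iff their labels agree in $C/\Gamma$. Any $h\in\op{SL}_2(\mathcal{O}_{K,S})$ conjugating one dihedral overgroup onto another must preserve their unique order-$\ell$ subgroup $\Gamma$, so lies in $N(\Gamma)$, and it suffices to compute $N(\Gamma)$-orbits on $C/\Gamma$. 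A short calculation using $cg=gc^{-1}$ shows that conjugation by $c'\in C$ acts on the flip $gc_0$ as $c_0\mapsto c_0(c')^{-2}$, while conjugation by $gc'$ acts as $c_0\mapsto c_0^{-1}(c')^2$. Since $\ell$ is odd, $\Gamma\subseteq C^2$, so the natural quotient map $C/\Gamma\to C/C^2$ is surjective; on $C/C^2$ the inversion $c_0\mapsto c_0^{-1}$ is trivial, and so the orbit set is $C/C^2\cong\ker\op{Nm}_1\otimes_{\mathbb{Z}}\mathbb{Z}/2\mathbb{Z}$.

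Finally, to match the ideal-theoretic formulation I would translate via Proposition~\ref{prop:biject}: a flip $g$ corresponds to an $\iota$-semilinear self-isomorphism of the underlying lattice representing $\mathcal{I}_\rho$, equivalently a generator of $\mathcal{I}^{-1}\otimes\mathcal{I}^\sigma$ compatible with the chosen orientation. The ambiguity in selecting such an oriented trivialization, modulo the $\op{SL}_2$-equivalence appearing in the definition of oriented ideals, reproduces the group $\ker\op{Nm}_1\otimes\mathbb{Z}/2\mathbb{Z}$. The main obstacle I expect is precisely this dictionary translation between the group-theoretic flip-cosets and the ``orientations of $\mathcal{I}^{-1}\otimes\mathcal{I}^\sigma$'': once the group-theoretic count above is in place, the identification with orientations is a routine unraveling of the definitions from Section~\ref{sec:conjugacy1}, exploiting that the norm map on units is precisely the map identifying $\sim$-equivalent oriented ideals.
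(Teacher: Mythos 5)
The paper states this corollary without proof, as an immediate consequence of Proposition~\ref{prop:aut}, and your overall strategy --- reduce everything to the coset $gC$ of ``flips'' in $N(\Gamma)=C\sqcup gC$ and count its orbits under $N(\Gamma)$-conjugation --- is surely the intended one. The first assertion goes through, although your justification that a normalizing, non-centralizing element must invert $\rho$ because inversion is ``the unique non-trivial automorphism'' of a group of prime order is wrong for $\ell>3$ (where $\op{Aut}(\mathbb{Z}/\ell\mathbb{Z})$ is cyclic of order $\ell-1$); the correct reason is the one in the proof of Proposition~\ref{prop:aut}, namely that the normalizer of a maximal torus of $\op{SL}_2$ acts on the torus through $\{\pm 1\}$. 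You should also record that any such flip satisfies $g^2=-I$ (indeed $g^2\in C$ is both centralized and inverted by $g$, so $g^2=\pm I$, and $g^2=I$ would force $g=\pm I\in C$); this is what makes $\langle\Gamma,g\rangle$ a finite, binary dihedral group of order $4\ell$ in the first place.

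The genuine gap is in the counting step. Because $(gc_0)^2=-I$, every overgroup $\langle\Gamma,gc_0\rangle$ contains $-I$, and its set of flips is $\{\pm\rho^i gc_0\}=g\cdot(\pm\Gamma c_0)$; hence two overgroups coincide if and only if their labels agree in $C/(\langle -I\rangle\cdot\Gamma)$, not in $C/\Gamma$ as you assert. Running your (otherwise correct) orbit computation with the correct labels yields the orbit set $C/\bigl(\langle -1\rangle\cdot C^2\bigr)$ rather than $C/C^2\cong\ker\op{Nm}_1\otimes\mathbb{Z}/2\mathbb{Z}$, and the two agree precisely when $-1$ is a square in $\ker\op{Nm}_1$, which typically fails. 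For instance, take $K=\mathbb{Q}(\zeta_3)$, $\ell=3$ and $S$ the infinite places together with the place over $3$, so that $C\cong\mathcal{O}_{K,S}^\times\cong\mu_6\times\langle\sqrt{-3}\rangle$: a direct computation with the flips $\left(\begin{smallmatrix}0&b\\-b^{-1}&0\end{smallmatrix}\right)$, $b\in\mathcal{O}_{K,S}^\times$, shows that $\Gamma=\langle\op{diag}(\zeta_3,\zeta_3^{-1})\rangle$ has exactly two conjugacy classes of binary dihedral overgroups, whereas $\ker\op{Nm}_1\otimes\mathbb{Z}/2\mathbb{Z}$ has order four. What your argument counts on the nose is conjugacy classes of \emph{flips} (order-$4$ elements inverting $\Gamma$), which is indeed $C\otimes\mathbb{Z}/2\mathbb{Z}$. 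So your derivation of the stated formula passes through a false identification, and fixing it changes the answer to $\ker\op{Nm}_1/\langle -1,(\ker\op{Nm}_1)^2\rangle$ --- which suggests the second assertion of the corollary itself should either be read as counting flips/orientations rather than overgroups, or be corrected by the square class of $-1$. The final translation to ``orientations of $\mathcal{I}^{-1}\otimes\mathcal{I}^\sigma$'' is only sketched in your write-up, but that is not where the problem lies.
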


\begin{remark}
The above results on computations of normalizers also explain exactly how to pass from the conjugacy classification for elements (with fixed characteristic polynomial) to the conjugacy classification for subgroups. What can happen is that the normalizer of the subgroup $\langle\rho\rangle$ identifies $\rho$ and $\rho^{-1}$ which have the same characteristic polynomial $T^2-(\zeta_\ell+\zeta_\ell^{-1})T+1$. In particular, the conjugacy classes of  subgroups are given as orbit set of $\mathcal{C}_{K,S,\ell}\cong \widetilde{\op{Pic}}(R_{K,S,\ell}/\mathcal{O}_{K,S})$ under the $\op{Gal}(K(\Psi_\ell)/K)$-action.
\end{remark}

It remains to better understand the Galois action on the oriented relative class group. The following is just a consequence of writing out the definition of oriented ideals:

\vbox{
\begin{proposition}
\label{prop:galois}
\begin{enumerate}
\item Assume $\zeta_\ell\not\in K$ and assumption (R1) is satisfied. Then the action of $\op{Gal}(K(\Psi_\ell)/K)$ on $\widetilde{\op{Pic}}(R_{K,S,\ell}/\mathcal{O}_{K,S})$ is induced from sending an oriented ideal $(\mathfrak{a},a)\mapsto(\iota(\mathfrak{a}),a')$, where $a'$ is given as follows:  represent the orientation $a\in\bigwedge^2_{\mathcal{O}_{K,S}}\mathfrak{a}$ by a corresponding $\mathcal{O}_{K,S}$-basis s.th. $a=x_1\wedge x_2$. Then $a'=\iota(x_1)\wedge\iota(x_2)$, which is a volume form for $\iota(\mathfrak{a})$. 
\item Assume $\zeta_\ell\in K$ and assumption (R2) is satisfied. Then the action of $\op{Gal}(K(\Psi_\ell)/K)$ on $\widetilde{\op{Pic}}(R_{K,S,\ell}/\mathcal{O}_{K,S})\cong \op{Pic}(\mathcal{O}_{K,S})$ is given by the inverse. 
\end{enumerate}
\end{proposition}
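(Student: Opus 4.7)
The plan is to transport the Galois action from $\mathcal{C}_{K,S,\ell}$ to $\widetilde{\op{Pic}}(R_{K,S,\ell}/\mathcal{O}_{K,S})$ via the bijection of Proposition~\ref{prop:biject} and then unpack it as an explicit operation on oriented ideal representatives. The underlying observation is that the bijection depends on the choice, fixed in Subsection~\ref{sec:notation}, of a primitive $\ell$-th root of unity $\zeta_\ell$ intertwining the action of $\rho\in\op{SL}_2(\mathcal{O}_{K,S})$ on $K^2$ with multiplication by $\zeta_\ell$; the opposite choice $\zeta_\ell^{-1}=\iota(\zeta_\ell)$ sends the conjugacy class of $\rho$ to that of $\rho^{-1}$, which is exactly the Galois action on $\mathcal{C}_{K,S,\ell}$.

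For part~(1), I would start from an element $\rho$ and an isomorphism $\phi\colon K^2\xrightarrow{\sim}K(\Psi_\ell)$ intertwining $\rho$ with multiplication by $\zeta_\ell$, so the associated oriented ideal is $(\mathfrak{a},a)=(\phi(\mathcal{O}_{K,S}^2),\,\phi(e_1)\wedge\phi(e_2))$. Composing with $\iota$ yields a new isomorphism $\iota\circ\phi\colon K^2\to K(\Psi_\ell)$ that intertwines $\rho$ with multiplication by $\zeta_\ell^{-1}$, equivalently intertwines $\rho^{-1}$ with multiplication by $\zeta_\ell$; hence it represents the Galois-twisted class. Its underlying ideal is $(\iota\circ\phi)(\mathcal{O}_{K,S}^2)=\iota(\mathfrak{a})$, and for any $\mathcal{O}_{K,S}$-basis $(x_1,x_2)$ of $\mathfrak{a}$ with $a=x_1\wedge x_2$, the image $(\iota(x_1),\iota(x_2))$ is an $\mathcal{O}_{K,S}$-basis of $\iota(\mathfrak{a})$, since $\iota$ fixes $\mathcal{O}_{K,S}$ pointwise by Subsection~\ref{sec:notation}. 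This produces the claimed volume form $a'=\iota(x_1)\wedge\iota(x_2)$.

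For part~(2), Proposition~\ref{prop:split} identifies $R_{K,S,\ell}\cong \mathcal{O}_{K,S}\times\mathcal{O}_{K,S}$ under assumption (R2), and the involution $\iota$ becomes the swap of the two factors. A fractional $R_{K,S,\ell}$-ideal then decomposes as a pair $(I_1,I_2)$ of fractional $\mathcal{O}_{K,S}$-ideals; the argument in Proposition~\ref{prop:picsplit} shows that admitting an $\mathcal{O}_{K,S}$-basis forces such a pair to be equivalent to one of the form $(I,I^{-1})$, and that the resulting isomorphism $\widetilde{\op{Pic}}(R_{K,S,\ell}/\mathcal{O}_{K,S})\xrightarrow{\sim}\op{Pic}(\mathcal{O}_{K,S})$ sends the class of $(I,I^{-1})$ to $[I]$. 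Swapping the factors sends $(I,I^{-1})$ to $(I^{-1},I)$, whose image is $[I^{-1}]=[I]^{-1}$, so $\iota$ acts as inversion on $\op{Pic}(\mathcal{O}_{K,S})$.

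The main bookkeeping concern will be verifying in~(1) that the prescription $a\mapsto a'$ is well-defined on $\sim$-equivalence classes and independent of the basis presenting $a$. Any two bases of $\mathfrak{a}$ inducing the same volume form differ by a matrix in $\op{SL}_2(\mathcal{O}_{K,S})$, which is $\iota$-fixed; and if $(\mathfrak{a},a)\sim(\mathfrak{b},b)$ via $\tau\in K(\Psi_\ell)^\times$, then $(\iota(\mathfrak{a}),a')\sim(\iota(\mathfrak{b}),b')$ via $\iota(\tau)$, using that $\iota$ is a ring automorphism and $N_{K(\Psi_\ell)/K}\circ\iota=N_{K(\Psi_\ell)/K}$.
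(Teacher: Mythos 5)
Your argument is correct and is essentially the paper's intended one: the paper gives no explicit proof, stating only that the proposition ``is just a consequence of writing out the definition of oriented ideals,'' and your write-up is precisely that unwinding -- transporting the action $\rho\mapsto\rho^{-1}$ through the bijection of Proposition~\ref{prop:biject} by replacing the trivialization $\phi$ with $\iota\circ\phi$, and reducing part~(2) to the factor swap on $\mathcal{O}_{K,S}\times\mathcal{O}_{K,S}$ via Propositions~\ref{prop:split} and~\ref{prop:picsplit}. Your final bookkeeping concern is also handled cleanly since $\iota$ is $\mathcal{O}_{K,S}$-linear, so $a'=\bigl(\bigwedge^2\iota\bigr)(a)$ is defined independently of any basis and the compatibility with $\sim$ follows from $N_{K(\Psi_\ell)/K}\circ\iota=N_{K(\Psi_\ell)/K}$ exactly as you say.
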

}

In particular, the action of the Galois group on orientations is not the trivial one (as one could think from the identification as quotient of $\mathcal{O}_{K,S}^\times$). This implies that the Galois action does not respect the group structure on orientations (viewed as cokernel of $\op{Nm}_1$).

\begin{example}
We discuss the Galois action in the simplest case, namely elements and subgroups of $\op{SL}_2(\mathbb{Z})$ of order~$3$. 
In this case, we have $\mathcal{O}_{K,S}=\mathbb{Z}$, $R_{K,S,\ell}=\mathbb{Z}[\zeta_3]$ and condition (R1) is satisfied. 
The ring $\mathbb{Z}[\zeta_3]$ is euclidean, so there is only a trivial ideal class. 
Moreover, the norm map on units is the trivial map $\mathbb{Z}[\zeta_3]^\times\cong\mu_6\to\mu_2\cong\{1,-1\}$. 
Therefore, we find two conjugacy classes of elements of exact order $3$ in $\op{SL}_2(\mathbb{Z})$, 
given by the two possible orientations of $\mathbb{Z}[\zeta_3]$ as free $\mathbb{Z}$-module of rank two. 
We can choose a $\mathbb{Z}$-basis $(1,\zeta_3)$ for $\mathbb{Z}[\zeta_3]$. 
The Galois group $\op{Gal}(\mathbb{Q}(\zeta_3)/\mathbb{Q})$ will send this basis to $(1,\zeta_3^2)$, 
which is obviously orientation-reversing. 
The result is the obvious one - there is a unique conjugacy class of subgroups of order $3$ in $\op{SL}_2(\mathbb{Z})$. 
\end{example}

\subsection{Explicit formulas}

Finally, we can combine the previous results into a result describing conjugacy classes of finite cyclic subgroups in $\op{SL}_2(\mathcal{O}_{K,S})$ with $K$ a global field, under suitable regularity assumptions.  

\newpage
\begin{theorem}
\label{thm:conjsl2}
Let $K$ be a global field, and fix an odd prime $\ell$ different from the
characteristic of $K$.
\begin{enumerate}
\item Assume $\zeta_\ell\not\in K$ and that assumption (R1) is satisfied. The set $\mathcal{C}_{\ell}$ of conjugacy classes of elements of order $\ell$ with characteristic polynomial $\Psi_\ell(T)=T^2-(\zeta_\ell+\zeta_\ell^{-1})T+1$ is non-empty, has a group structure and sits in the extension 
$$
1\to \op{coker}\left(\op{Nm}_1:R_{K,S,\ell}^\times\to 
  \mathcal{O}_{K,S}^\times\right)\to \mathcal{C}_\ell\to
\ker\left(\op{Nm}_0:\op{Pic}(R_{K,S,\ell})\to
\op{Pic}(\mathcal{O}_{K,S})\right)\to 1.
$$
Additionally, the set $\mathcal{C}_{\ell}$ has an action of ${\op{Gal}(K(\Psi_\ell)/K)}$, 
obtained from its identification with oriented ideals in $R_{K,S,\ell}$. 
Denoting by $\mathcal{K}_{\ell}$ the conjugacy classes  of subgroups of order  $\ell$ in $\op{SL}_2(\mathcal{O}_{K,S})$, we have an isomorphism  $\mathcal{K}_\ell\cong\mathcal{C}_{\ell}/{\op{Gal}(K(\Psi_\ell)/K)}$.

A finite group   $\Gamma$ with $[\Gamma]\in\mathcal{K}_\ell$ is contained
in a dihedral overgroup if and only if the corresponding element $\mathcal{I}(\Gamma)$ in
$\mathcal{C}_{\ell}$ is $\op{Gal}(K(\Psi_\ell)/K)$-invariant.

If the corresponding element $\mathcal{I}(\Gamma)$ in 
$\mathcal{C}_{\ell}$ is $\op{Gal}(K(\Psi_\ell)/K)$-invariant, then the normalizer of $\Gamma$ in   $\op{SL}_2(\mathcal{O}_{K,S})$ is isomorphic to
$$\ker\left(\op{Nm}_1:R_{K,S,\ell}^\times\to\mathcal{O}_{K,S}^\times
\right)\rtimes\mathbb{Z}/2\mathbb{Z}$$ 
with the quotient $\mathbb{Z}/2\mathbb{Z}$ acting via inversion.

If the corresponding element $\mathcal{I}(\Gamma)$ in 
$\mathcal{C}_{\ell}$ is not $\op{Gal}(K(\Psi_\ell)/K)$-invariant, then the normalizer of $\Gamma$ in $\op{SL}_2(\mathcal{O}_{K,S})$ is isomorphic to   $\ker\left(\op{Nm}_1:R_{K,S,\ell}^\times\to\mathcal{O}_{K,S}^\times \right)$.
\item Assume $\zeta_\ell\in K$ and that assumption (R2) is satisfied. Then the conjugacy classes of elements of order $\ell$ with characteristic polynomial $\Psi_\ell(T)=T^2-(\zeta_\ell+\zeta_\ell^{-1})T+1$ are in bijection with $\op{Pic}(\mathcal{O}_{K,S})$. 
Denoting the involution $\mathcal{I}\mapsto \mathcal{I}^{-1}$ by $\iota$, the conjugacy classes $\mathcal{K}_\ell$ of subgroups of order $\ell$ are in bijection with the orbit set $\op{Pic}(\mathcal{O}_{K,S})/\iota$. 

Any such finite group $\Gamma$ is contained in a dihedral overgroup if and only if the corresponding element $\mathcal{I}(\Gamma)$ in $\op{Pic}(\mathcal{O}_{K,S})$ is $\mathbb{Z}/2\mathbb{Z}$-invariant. 

If the corresponding element $\mathcal{I}(\Gamma)$ in 
$\mathcal{C}_{\ell}$ is $\iota$-invariant, then the normalizer of $\Gamma$ in    $\op{SL}_2(\mathcal{O}_{K,S})$ is isomorphic to $\mathcal{O}_{K,S}^\times\rtimes\mathbb{Z}/2\mathbb{Z}$ with $\mathbb{Z}/2\mathbb{Z}$ acting  by inversion. 

If the corresponding element $\mathcal{I}(\Gamma)$ in 
$\mathcal{C}_{\ell}$ is not $\iota$-invariant, then the normalizer of $\Gamma$ in $\op{SL}_2(\mathcal{O}_{K,S})$ is isomorphic to $\mathcal{O}_{K,S}^\times$.
\end{enumerate}
\end{theorem}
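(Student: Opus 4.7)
The plan is to assemble the theorem from the ingredients already established in Sections~\ref{sec:conjugacy1} and \ref{sec:conjugacy2}; each clause of the statement is essentially a consolidation of one of the prior propositions into a single ready-to-apply form, so the work is one of assembly rather than the production of genuinely new arguments.

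For part (1), under assumption (R1), the first step is to combine \prettyref{prop:biject} (identifying $\mathcal{C}_\ell$ with $\widetilde{\op{Pic}}(R_{K,S,\ell}/\mathcal{O}_{K,S})$) with \prettyref{prop:lmsl} (which exhibits the oriented class group as an extension of $\op{Pic}(R_{K,S,\ell}/\mathcal{O}_{K,S}) = \ker\op{Nm}_0$ by $\op{coker}\op{Nm}_1$). Non-emptiness is clear from the trivial oriented ideal class, which corresponds to multiplication by $\zeta_\ell$ on $R_{K,S,\ell}$ with its canonical $\mathcal{O}_{K,S}$-basis (available under (R1) by \prettyref{cor:steinitz1}). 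The passage from elements to subgroups then follows from \prettyref{prop:aut} together with \prettyref{prop:galois}, yielding $\mathcal{K}_\ell \cong \mathcal{C}_\ell / \op{Gal}(K(\Psi_\ell)/K)$, and the dihedral overgroup criterion is exactly the content of \prettyref{cor:dihedral}.

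The normalizer description in both Galois subcases combines \prettyref{prop:centralizer1} (which gives $C_{\op{SL}_2(\mathcal{O}_{K,S})}(\langle\rho\rangle) \cong \ker\op{Nm}_1$) with \prettyref{prop:aut} (which computes the Weyl quotient as the Galois stabilizer of $\mathcal{I}_\rho$). When $\mathcal{I}(\Gamma)$ is not Galois-invariant the Weyl group is trivial, so normalizer equals centralizer. When it is Galois-invariant the Weyl group is $\mathbb{Z}/2\mathbb{Z}$ acting by inversion. The principal subtle step, and what I expect to be the main obstacle, is the claim that the resulting extension is actually split (a semidirect product) rather than a non-trivial $\mathbb{Z}/2$-extension. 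The cleanest way to produce the splitting is to invoke \prettyref{cor:dihedral}: Galois-invariance of $\mathcal{I}(\Gamma)$ guarantees a dihedral overgroup of $\Gamma$ inside $\op{SL}_2(\mathcal{O}_{K,S})$, and any order-two element of that dihedral group lying outside $\langle\rho\rangle$ furnishes the required lift of the Weyl element, at the same time realizing the inversion action on $\ker\op{Nm}_1$ through the action of the algebraic normalizer of the torus.

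Part (2) proceeds along the same template, substituting \prettyref{prop:picsplit} for the identification $\widetilde{\op{Pic}}(R_{K,S,\ell}/\mathcal{O}_{K,S}) \cong \op{Pic}(\mathcal{O}_{K,S})$, \prettyref{prop:centralizer2} for the centralizer $\mathcal{O}_{K,S}^\times$, and the second part of \prettyref{prop:galois} for the description of $\iota$ as inversion on $\op{Pic}(\mathcal{O}_{K,S})$. Once again the dihedral overgroup of \prettyref{cor:dihedral} supplies the splitting in the $\iota$-invariant case, and the non-invariant case is immediate from the vanishing of the Weyl quotient. In both parts the group structure on $\mathcal{C}_\ell$ claimed in the statement is simply the one already transported from $\widetilde{\op{Pic}}(R_{K,S,\ell}/\mathcal{O}_{K,S})$ along the bijection of \prettyref{prop:biject}.
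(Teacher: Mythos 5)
Your proposal matches the paper's own proof essentially verbatim: the paper assembles the theorem from exactly the same ingredients (Proposition~\ref{prop:biject} and Proposition~\ref{prop:lmsl} for the extension, Proposition~\ref{prop:centralizer1}, Proposition~\ref{prop:aut} and Corollary~\ref{cor:dihedral} for normalizers and dihedral overgroups, with Propositions~\ref{prop:picsplit} and \ref{prop:centralizer2} substituted in part (2)). Your extra attention to why the $\mathbb{Z}/2\mathbb{Z}$-extension splits, via the order-two element of the dihedral overgroup guaranteed by Corollary~\ref{cor:dihedral}, is a detail the paper leaves implicit, and it is handled correctly.
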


\begin{proof}
(i) The conjugacy classification for elements is given by the bijection of Proposition~\ref{prop:biject}. The exact sequence is Proposition~\ref{prop:lmsl}. The description of centralizers and normalizers  is given in Proposition~\ref{prop:centralizer1} and Proposition~\ref{prop:aut}. The description of dihedral overgroups is Corollary~\ref{cor:dihedral}. From the description of normalizers, we get the conjugacy classification of subgroups as claimed.

Part (ii) is proved along the same lines, using Propositions~\ref{prop:picsplit} and \ref{prop:centralizer2} at the suitable places.
\end{proof}

\begin{remark}
See also \cite{sl2parabolic} for similar results concerning
$\op{SL}_2(K[C])$ with $C$ a smooth affine curve over an algebraically
closed field $K$. 
\end{remark}

\begin{remark}
Similar results can be obtained for $\op{GL}_2(\mathcal{O}_{K,S})$ with $K$ a global field, with only small modifications to the proofs. We only formulate the result.
\begin{enumerate}
\item Assume $\zeta_\ell\in K$. Then the conjugacy classes $[\Gamma]$ of $\ell$-order subgroups in $\op{GL}_2(\mathcal{O}_{K,S})$ are in bijection with   elements of $\op{Pic}(\mathcal{O}_{K,S})/\iota$. Such a finite group $\Gamma$ is contained in a dihedral overgroup if and only if the corresponding element $\mathcal{I}(\Gamma)$ in $\op{Pic}(\mathcal{O}_{K,S})$ is $\mathbb{Z}/2\mathbb{Z}$-invariant; in that case, all dihedral overgroups are conjugate. The normalizer of $\Gamma$ in $\op{GL}_2(\mathcal{O}_{K,S})$ is isomorphic   to \mbox{$(\mathcal{O}_{K,S}^\times)^2\rtimes\mathbb{Z}/2\mathbb{Z}$} if there is a dihedral overgroup, and isomorphic to $(\mathcal{O}_{K,S}^\times)^2$ otherwise. 
\item Assume $\zeta_\ell\not\in K$. Then the conjugacy classes $[\Gamma]$ of $\ell$-order subgroups in $\op{GL}_2(\mathcal{O}_{K,S})$ are classified by elements of the orbit set $\op{Pic}(R_{K,S,\ell}/  \mathcal{O}_{K,S})/{\op{Gal}(K(\Psi_\ell)/K)}$. Such a finite group $\Gamma$ is  contained in a dihedral overgroup if and only if the corresponding element $\mathcal{I}(\Gamma)$ in $\op{Pic}(R_{K,S,\ell}/\mathcal{O}_{K,S})$ is   $\mathbb{Z}/2\mathbb{Z}$-invariant; in that case, all dihedral overgroups are conjugate. The normalizer of $\Gamma$ in $\op{GL}_2(\mathcal{O}_{K,S})$ is isomorphic to  $\mathcal{O}_{K,S}[\zeta_\ell]^\times\rtimes\mathbb{Z}/2\mathbb{Z}$ if there is a dihedral overgroup, and to $\mathcal{O}_{K,S}[\zeta_\ell]^\times$ otherwise. 
\end{enumerate}
\end{remark}

\begin{remark}
The above results are more or less immediate generalizations of the classification in \cite{busch:conjugacy}. The relation to the classification results of Prestel \cite{prestel}, Schneider \cite{schneider:75}, Kr\"amer \cite{kraemer:diplom} or Maclachlan \cite{maclachlan:torsion} is a bit more subtle to discuss.  We restrict ourselves to mention the two major differences: one is due to the fact that the cited works consider the more general situation of possibly non-split quaternion algebras (instead of $\op{M}_2(A)$ considered here). The second difference is that the cited works provide much more elaborate formulas for the orders of relative class groups, where we are basically stopping at Proposition~\ref{prop:lmsl}. For our applications to computations of Farrell--Tate cohomology we do not need the actual numbers, but the more conceptual explanations provided by the results  above.
\end{remark}

\section{Application I: Quillen conjecture and non-detection}
\label{sec:quillen}

In this section, we want to discuss some consequences of our computations for a conjecture of Quillen as well as detection questions in group cohomology. In particular, we are going to prove Theorem~\ref{thm:main3}. 

We first recall  the conjecture stated in Quillen's paper, cf. \cite{quillen:spectrum}*{p. 591}. 

\begin{conjecture}[Quillen] 
\label{Quillen-conjecture}
Let $\ell$ be a prime number. Let $K$ be a number field with $\zeta_\ell\in K$, and $S$ a finite set of places containing the infinite places and the places over $\ell$. Then the natural inclusion $\mathcal{O}_{K,S}\hookrightarrow \mathbb{C}$ makes $\op{H}^\bullet(\op{GL}_n(\mathcal{O}_{K,S}),\mathbb{F}_\ell)$ a free module over the cohomology ring $\op{H}^\bullet_{\op{cts}}(\op{GL}_n(\mathbb{C}),\mathbb{F}_\ell)\cong\mathbb{F}_\ell[c_1,\dots,c_n]$. 
\end{conjecture}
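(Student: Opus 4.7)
The plan is to combine Quillen's own F-isomorphism theorem with an arithmetic classification of elementary abelian $\ell$-subgroups of $\op{GL}_n(\mathcal{O}_{K,S})$ in the spirit of Theorem~\ref{thm:main1}. Modulo nilpotents, the ring $\op{H}^\bullet(\op{GL}_n(\mathcal{O}_{K,S}),\mathbb{F}_\ell)$ is controlled by the category of elementary abelian $\ell$-subgroups together with their normalizers, so the strategy is to read off the cohomology of each stratum, identify the image of each $c_i$ there, and then assemble the pieces into a free module over $\mathbb{F}_\ell[c_1,\dots,c_n]$.

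First I would classify the elementary abelian $\ell$-subgroups. Under the hypotheses $\zeta_\ell\in K$ and $S_\ell\subseteq S$, every element of order $\ell$ is diagonalizable over $K$, so after conjugation such a subgroup sits inside the standard split maximal torus $T_n\cong (\mathcal{O}_{K,S}^\times)^n$. The rank-$r$ elementary abelian subgroups should be parametrized by unordered $r$-tuples of non-trivial characters $T_n\to\mu_\ell$, twisted by arithmetic data governing conjugation back into $\op{GL}_n(\mathcal{O}_{K,S})$; a Latimer--MacDuffee argument analogous to Proposition~\ref{prop:biject} shows that this twisting is governed by ideal classes in $R_{K,S,\ell}$, with Weyl-group action extending the Galois action of Section~\ref{sec:conjugacy2}.

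Second I would compute the cohomology of each normalizer and identify the image of each Chern class. The K\"unneth formula for $T_n$ produces a tensor product of polynomial generators $a_{i,2}$, one per diagonal entry, with an exterior algebra on degree-one classes $b_{i,1},x_{i,j,1}$ coming from the units, exactly as in Proposition~\ref{6.1} for $r=1$. The Weyl group $S_n$ permutes tensor factors, and the splitting principle applied to the composite $\op{GL}_n(\mathcal{O}_{K,S})\hookrightarrow\op{GL}_n(\mathbb{C})$ forces each $c_i$ to restrict to the $i$-th elementary symmetric polynomial in the $a_{i,2}$. A Chevalley--Shephard--Todd argument then exhibits $\op{H}^\bullet(T_n,\mathbb{F}_\ell)^{S_n}$ as a free $\mathbb{F}_\ell[c_1,\dots,c_n]$-module on an explicit transversal for the $S_n$-action on the exterior factor, with the other strata analysed by parabolic induction from smaller rank.

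The main obstacle is precisely the failure of detection already uncovered by Theorem~\ref{thm:main3}: for $\op{SL}_2$, whenever the relative class group of $R_{K,S,\ell}/\mathcal{O}_{K,S}$ produces more than two conjugacy classes of order-$\ell$ subgroups, the restriction map to the split torus fails to be injective, so the invariants of the diagonal torus cannot possibly record the entire cohomology ring. In the $\op{GL}_n$ setting this obstruction is amplified at every height of the elementary abelian subgroup lattice, and the spectral sequence assembling normalizer contributions into $\op{H}^\bullet(\op{GL}_n(\mathcal{O}_{K,S}),\mathbb{F}_\ell)$ has differentials controlled by relative class group data which resist direct computation. This is the point at which the strategy ceases to be unconditional; indeed Henn--Lannes--Schwartz have shown that Quillen's conjecture in the stated generality fails for $n$ sufficiently large, so any rigorous implementation of the plan must either restrict the rank, weaken freeness to an $F$-isomorphism modulo nilpotents, or, as in Theorem~\ref{thm:main3}, replace ordinary group cohomology by Farrell--Tate cohomology above the virtual cohomological dimension.
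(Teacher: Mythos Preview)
The statement you are attempting to prove is not a theorem in the paper; it is Conjecture~\ref{Quillen-conjecture}, and the paper gives no proof of it. Immediately after stating it, the paper notes that the range of validity of the conjecture has not been decided, lists a handful of low-rank positive cases (Mitchell, Henn, Anton), and recalls that it is known to be \emph{false} for $n$ large via the work of Dwyer and Henn--Lannes--Schwartz. So there is no ``paper's own proof'' to compare your proposal against.

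Your write-up in fact reflects this: the third paragraph correctly identifies that detection fails in general and that the strategy ``ceases to be unconditional,'' and you explicitly cite the Henn--Lannes--Schwartz counterexamples. That is the right conclusion, but it means what you have written is not a proof proposal at all---it is an outline of an approach together with a diagnosis of why it cannot succeed in the stated generality. The honest summary is: the conjecture as stated is false, the paper does not claim otherwise, and the paper's actual contribution in this direction is Theorem~\ref{thm:main3}, which establishes the analogue of Quillen's conjecture for $\op{SL}_2$ only in Farrell--Tate cohomology (equivalently, for group cohomology above the virtual cohomological dimension). If you want to produce a proof of something, the correct target is that weaker statement, and the paper's argument for it goes through Proposition~\ref{prop:free} and the explicit identification of the image of $c_2$ in each normalizer component.
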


The range of validity of the conjecture has not yet been decided. Positive cases in which the conjecture has been established are $n=\ell=2$ by Mitchell \cite{mitchell}, $n=3$, $\ell=2$ by Henn \cite{henn}, and  $n=2$, $\ell=3$ by Anton \cite{anton}. 

A related question is the following detection of cohomology classes on diagonal matrices: 

\begin{definition}[Detection]
We say that detection of $\ell$-cohomology classes is satisfied for $\op{GL}_n(\mathcal{O}_{K,S})$ if the restriction morphism 
$
\op{H}^\bullet(\op{GL}_n(\mathcal{O}_{K,S}),\mathbb{F}_\ell)\to
\op{H}^\bullet(\op{T}_n(\mathcal{O}_{K,S}),\mathbb{F}_\ell)
$
is injective, where $\op{T}_n$ is the group of diagonal matrices in $\op{GL}_n$.
\end{definition}

Actually, all cases where the Quillen conjecture is known to be false can  be traced to \cite{henn:lannes:schwartz}*{remark on p. 51}, which shows that Quillen's conjecture implies detection for $\op{GL}_n(\mathbb{Z}[1/2])$. Non-injectivity of the restriction map, i.e., failure of detection, has been shown by Dwyer \cite{dwyer}  for $n\geq 32$ and $\ell=2$. Dwyer's bound was subsequently improved by Henn and Lannes to $n\geq 14$. At the prime $\ell=3$, Anton proved non-injectivity for $n\geq 27$, cf. \cite{anton}. 

Using the Farrell--Tate cohomology computations for $\op{SL}_2(\mathcal{O}_{K,S})$ in Theorem~\ref{thm:main1}, we can now discuss these questions - or  weaker versions - in the case $n=2$. Saying something about the Quillen conjecture means studying the module structure of $\op{H}^\bullet(\op{SL}_2(\mathcal{O}_{K,S}),\mathbb{F}_\ell)$ over the continuous cohomology ring $\op{H}^\bullet_{\op{cts}}(\op{SL}_2(\mathbb{C}),\mathbb{F}_\ell)\cong\mathbb{F}_\ell[c_2]$. From our computations of Farrell--Tate cohomology, we can infer statements on the module structure of $\widehat{\op{H}}^\bullet(\op{SL}_2(\mathcal{O}_{K,S}),\mathbb{F}_\ell)$, which allows for group cohomology statements above the virtual cohomological dimension. Recall from Theorem~\ref{thm:main1}, that the groups relevant for the computation of Farrell--Tate cohomology of $\op{SL}_2(\mathcal{O}_{K,S})$ are abelian groups $G=\mathbb{Z}/\ell\times\mathbb{Z}^n$ or dihedral extensions of such. The following result describes their Farrell--Tate cohomology as module over the relevant polynomial subrings. 

\begin{proposition}
\label{prop:free}
Let $G=\mathbb{Z}/\ell\times\mathbb{Z}^n$. 
\begin{enumerate}
\item Denoting by $b_1,x_1,\dots,x_n$ exterior classes of degree $1$ , and by $a_2$ a polynomial class of degree $2$, the cohomology ring 
$$
\op{H}^\bullet(G,\mathbb{F}_\ell)\cong
\mathbb{F}_\ell[a_2](b_1,x_1,\dots,x_n)
$$
is a free module of rank $2^{n+2}$ over the subring $\mathbb{F}_\ell[a_2^2]$.  
\item Let $\mathbb{Z}/2$ act via multiplication by $-1$ on all the generators. The invariant subring $\op{H}^\bullet(G,\mathbb{F}_\ell)^{\mathbb{Z}/2}$ is a free module of rank $2^{n+1}$ over the subring $\mathbb{F}_\ell[a_2^2]$.  
\end{enumerate}
\end{proposition}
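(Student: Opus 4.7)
The plan is to observe that in both parts the ring in question is already presented as a tensor product of a polynomial part and an exterior part, so the proof reduces to counting basis elements and verifying freeness over a subring generated by an even power of the polynomial generator.

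For part (1), I would first note that the exterior algebra $\Lambda(b_1,x_1,\dots,x_n)$ has $\mathbb{F}_\ell$-basis indexed by the $2^{n+1}$ subsets of $\{b_1,x_1,\dots,x_n\}$, and hence $\op{H}^\bullet(G,\mathbb{F}_\ell)\cong \mathbb{F}_\ell[a_2]\otimes_{\mathbb{F}_\ell}\Lambda(b_1,x_1,\dots,x_n)$ is free of rank $2^{n+1}$ over $\mathbb{F}_\ell[a_2]$. Next, I would observe the obvious fact that $\mathbb{F}_\ell[a_2]$ is free of rank $2$ over the subring $\mathbb{F}_\ell[a_2^2]$, with basis $\{1,a_2\}$. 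Combining these two freeness statements by transitivity gives freeness of rank $2^{n+2}$ over $\mathbb{F}_\ell[a_2^2]$, with an explicit $\mathbb{F}_\ell[a_2^2]$-basis given by the $2^{n+2}$ monomials $a_2^{\varepsilon}\cdot b_1^{\delta_0}x_1^{\delta_1}\cdots x_n^{\delta_n}$ with $\varepsilon,\delta_i\in\{0,1\}$.

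For part (2), I would analyze the $\mathbb{Z}/2$-action on the basis found above. Since the action sends every generator to its negative, the basis monomial $a_2^{\varepsilon}\cdot b_1^{\delta_0}x_1^{\delta_1}\cdots x_n^{\delta_n}$ is multiplied by $(-1)^{\varepsilon+\delta_0+\delta_1+\cdots+\delta_n}$, and because $\ell$ is odd, such a monomial is $\mathbb{Z}/2$-invariant precisely when $\varepsilon+\sum\delta_i$ is even. The subring $\mathbb{F}_\ell[a_2^2]$ itself is fixed by the action, so the $\mathbb{Z}/2$-action on $\op{H}^\bullet(G,\mathbb{F}_\ell)$ is $\mathbb{F}_\ell[a_2^2]$-semilinear (in fact $\mathbb{F}_\ell[a_2^2]$-linear) and it permutes the basis by signs. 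Consequently the invariants form the $\mathbb{F}_\ell[a_2^2]$-submodule spanned by exactly those basis elements with $\varepsilon+\sum\delta_i$ even, which is a direct summand of the free module and therefore itself free.

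It remains to count these generators: the number of subsets of $\{b_1,x_1,\dots,x_n\}$ of even cardinality is $2^n$ and of odd cardinality is $2^n$, so choosing $\varepsilon=0$ gives $2^n$ invariant basis elements and choosing $\varepsilon=1$ gives another $2^n$, for a total of $2^{n+1}$. This yields freeness of rank $2^{n+1}$ over $\mathbb{F}_\ell[a_2^2]$, as claimed. There is essentially no hard step here; the only thing to be a bit careful about is to pick the basis of the ambient module so that it is permuted up to signs by the $\mathbb{Z}/2$-action, which makes the invariants visibly a direct summand rather than merely a submodule.
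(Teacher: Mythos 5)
Your proof is correct and follows essentially the same route as the paper: the paper likewise takes the $\mathbb{F}_\ell[a_2^2]$-basis given by the products $a_2^{\varepsilon}$ times wedge monomials in $b_1,x_1,\dots,x_n$ (hence rank $2^{n+2}$), and identifies the invariants as the span of those monomials whose total parity in $a_2$ and the exterior generators is even (hence rank $2^{n+1}$). Your write-up just makes explicit the sign computation and the direct-summand argument that the paper leaves implicit.
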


\begin{proof}
(1) is clear from the explicit formula given in Proposition~\ref{6.1}; and $2^{n+2}$ is the rank obtained from the basis consisting of all the wedge products of the set 
$\{a_2, b_1,x_1,\dots,x_n\}$.
\\
(2) follows from this, the invariant ring is additively generated by $a_2^{\otimes(2i+1)}$ tensor the odd degree part of $\bigwedge(b_1,x_1,\dots,x_n)$ and $a_2^{\otimes(2i)}$ tensor the even degree part of $\bigwedge(b_1,x_1,\dots,x_n)$. 
\end{proof}

These statements now allow to formulate the following result, which we would like to see as a version of  the Quillen conjecture for $\op{SL}_2$ above the virtual cohomological dimension. Recall that $\op{H}^\bullet_{\op{cts}}(\op{SL}_2(\mathbb{C}), \mathbb{F}_\ell) \cong \mathbb{F}_\ell[c_2]$ is generated by the second Chern class $c_2$ (which is a class in degree $4$). This is the subring over which we have to express $\op{H}^\bullet(\op{SL}_2(\mathcal{O}_{K,S}),\mathbb{F}_\ell)$ as a free module for the Quillen conjecture. 

\begin{theorem}
Let $\ell$ be an odd prime number. Let $K$ be a number field, and let $S$ be a finite set of places containing the infinite places. Let $G = \op{SL}_2(\mathcal{O}_{K,S})$. In the splitting of Theorem~\ref{thm:main1},
$$
\widehat{\op{H}}^\bullet(\op{SL}_2(\mathcal{O}_{K,S}),\mathbb{F}_\ell)\cong
\bigoplus_{[\Gamma]\in\mathcal{K}_\ell}
\widehat{\op{H}}^\bullet(N_G(\Gamma),\mathbb{F}_\ell),
$$
denote by $n_1$ the number of components where $N_G(\Gamma)$ is abelian, and by $n_2$ the number of components where it is not. For each component group $N_G(\Gamma)$, denote by $a_{\Gamma}$ the second Chern class of the standard representation of $\Gamma$ (which is a polynomial class in degree $4$). 

Then the restriction map induced by the natural inclusion $\mathcal{O}_{K,S}\hookrightarrow \mathbb{C}$ is given as follows:
$$
\mathbb{F}_\ell[c_2]\to \op{H}^\bullet(\op{SL}_2(\mathcal{O}_{K,S}),\mathbb{F}_\ell) \to 
\widehat{\op{H}}^\bullet(\op{SL}_2(\mathcal{O}_{K,S}),\mathbb{F}_\ell):
c_2\mapsto \sum_{[\Gamma]\in\mathcal{K}_\ell} a_{\Gamma}.
$$
Denoting by $r$ the rank of the relative unit group of  $R_{K,S,\ell}/\mathcal{O}_{K,S}$, the Farrell--Tate cohomology
$\widehat{\op{H}}^\bullet(\op{SL}_2(\mathcal{O}_{K,S}),\mathbb{F}_\ell)$
is a free module of rank $2^{r+1}(2n_1+n_2)$ over the Laurent polynomial subring generated by the image of $c_2$. 
\end{theorem}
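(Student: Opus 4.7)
The plan is to reduce the computation to the restriction on each component of the Brown splitting and then combine with the explicit formulas from Propositions~\ref{6.1} and \ref{prop:free}. First, by Theorem~\ref{thm:main1}(ii), the projection of $\widehat{\op{H}}^\bullet(\op{SL}_2(\mathcal{O}_{K,S}),\mathbb{F}_\ell)$ onto the $[\Gamma]$-component is induced by restriction to $N_G(\Gamma)$. The image of $c_2$ in that component is therefore the pullback of $c_2\in\op{H}^\bullet_{\op{cts}}(\op{SL}_2(\mathbb{C}),\mathbb{F}_\ell)$ along the composite $N_G(\Gamma)\hookrightarrow\op{SL}_2(\mathcal{O}_{K,S})\to\op{SL}_2(\mathbb{C})$, which is by definition $a_\Gamma$; this gives the formula $c_2\mapsto\sum a_\Gamma$.

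To identify $a_\Gamma$ inside the rings described in Theorem~\ref{thm:main1}(iii)--(iv), I would further restrict to the cyclic subgroup $\Gamma=\langle\rho\rangle\subseteq N_G(\Gamma)$. On $\Gamma$ the standard representation splits as $\chi\oplus\chi^{-1}$, where $\chi$ is the character sending $\rho\mapsto\zeta_\ell$, so by the splitting principle the total Chern class is $(1+c_1(\chi))(1-c_1(\chi))=1-c_1(\chi)^2$. Identifying $c_1(\chi)$ with the canonical degree-two generator $a_2\in\widehat{\op{H}}^2(\Gamma,\mathbb{F}_\ell)$, one obtains $a_\Gamma=-a_2^2$, a unit multiple of $a_2^2$ since $\ell$ is odd. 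The restriction from the normalizer's cohomology to that of $\Gamma$ is injective on the polynomial subring $\mathbb{F}_\ell[a_2^2,a_2^{-2}]$ (visibly so in the formulas of Theorem~\ref{thm:main1}), so this identification lifts to $N_G(\Gamma)$, and the image of $c_2$ generates exactly the Laurent polynomial ring $\mathbb{F}_\ell[c_2,c_2^{-1}]$ embedded diagonally as $\mathbb{F}_\ell[a_2^2,a_2^{-2}]$ in each component.

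The rank statement then follows by a direct count. On an abelian component, Proposition~\ref{6.1} gives $\widehat{\op{H}}^\bullet(N_G(\Gamma),\mathbb{F}_\ell)\cong\mathbb{F}_\ell[a_2,a_2^{-1}](b_1,x_1,\ldots,x_r)$, which is free of rank $2^{r+1}$ over $\mathbb{F}_\ell[a_2,a_2^{-1}]$ and hence of rank $2^{r+2}$ over the index-two subring $\mathbb{F}_\ell[a_2^2,a_2^{-2}]$. On a dihedral component, Proposition~\ref{prop:free}(2) identifies the $\mathbb{Z}/2$-invariants as a free module of rank $2^{r+1}$ over $\mathbb{F}_\ell[a_2^2,a_2^{-2}]$. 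Summing over the $n_1$ abelian and $n_2$ dihedral components yields the total rank $n_1\cdot 2^{r+2}+n_2\cdot 2^{r+1}=2^{r+1}(2n_1+n_2)$.

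The main obstacle I foresee is the bookkeeping ensuring that the degree-two generator $a_2$ appearing in the normalizer formulas of Theorem~\ref{thm:main1} coincides, up to a unit in $\mathbb{F}_\ell$, with the first Chern class of the canonical character of the contained cyclic subgroup. Once this compatibility between the various $a_2$'s is nailed down, the remaining arguments are routine applications of the splitting principle for Chern classes together with the explicit K\"unneth and invariant-theoretic computations already in place.
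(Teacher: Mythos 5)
Your proposal is correct and follows essentially the same route as the paper's own (very terse) proof: restriction componentwise via the conjugacy of all the $\Gamma$ in $\op{SL}_2(\mathbb{C})$, functoriality of the second Chern class, and the rank count over $\mathbb{F}_\ell[a_2^2,a_2^{-2}]$ from Propositions~\ref{6.1} and~\ref{prop:free}, giving $n_1\cdot 2^{r+2}+n_2\cdot 2^{r+1}=2^{r+1}(2n_1+n_2)$. Your explicit splitting-principle identification $a_\Gamma=-c_1(\chi)^2=-a_2^2$ is a detail the paper leaves implicit, and it correctly resolves the compatibility worry you raise at the end.
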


\begin{proof}
All the subgroups $\Gamma$ become conjugate in $\op{SL}_2(\mathbb{C})$ and the same is true for the centralizers $C_G(\Gamma)$. The restriction map for all groups $N_G(\Gamma)$ then has to map $c_2$ to the second Chern class of the standard representation of the cyclic or dihedral group, which is the element~$a_{\Gamma}$. The image of the restriction map is then the diagonal subring generated by $\sum a_{\Gamma}$. By Proposition~\ref{prop:free}, the cohomology ring is free as a module over this subring with the specified rank.
\end{proof}

As $\Gamma$ runs through finite cyclic groups, this proves the decomposition in into a sum of squares claimed in Theorem~\ref{thm:main3}.

\begin{corollary}
Let $K$ be a number field, let $S$ be a  finite set of places containing the infinite ones, and let $\ell$ be  an odd prime. 
\begin{enumerate}
\item
(An analogue of) The Quillen conjecture is true for the Farrell--Tate cohomology of  $\op{SL}_2(\mathcal{O}_{K,S})$. More precisely, the natural morphism   
$$
\mathbb{F}_\ell[c_2]\cong
\op{H}^\bullet_{\op{cts}}(\op{SL}_2(\mathbb{C}),\mathbb{F}_\ell)\to
\op{H}^\bullet(\op{SL}_2(\mathcal{O}_{K,S}),\mathbb{F}_\ell)
$$
extends to a morphism 
$$
\phi:\mathbb{F}_\ell[c_2,c_2^{-1}]\to
\widehat{\op{H}}^\bullet(\op{SL}_2(\mathcal{O}_{K,S}),\mathbb{F}_\ell)
$$
which makes $\widehat{\op{H}}^\bullet(\op{SL}_2(\mathcal{O}_{K,S}),\mathbb{F}_\ell)$
a free $\mathbb{F}_\ell[c_2,c_2^{-1}]$-module. 
\item The Quillen conjecture holds  for group cohomology $\op{H}^\bullet(\op{SL}_2(\mathcal{O}_{K,S}),\mathbb{F}_\ell)$ above the virtual cohomological dimension.  
\end{enumerate}
\end{corollary}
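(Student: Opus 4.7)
The plan is to deduce both parts as direct consequences of the preceding theorem, which already computes the image of $c_2$ under the decomposition of Theorem~\ref{thm:main1} and establishes freeness of $\widehat{\op{H}}^\bullet(\op{SL}_2(\mathcal{O}_{K,S}),\mathbb{F}_\ell)$ as a module over the Laurent subring generated by that image. The two items that remain to be settled are (a) that the morphism from $\mathbb{F}_\ell[c_2]$ actually extends across the inclusion $\mathbb{F}_\ell[c_2]\hookrightarrow\mathbb{F}_\ell[c_2,c_2^{-1}]$, and (b) that Laurent-ring freeness of $\widehat{\op{H}}^\bullet$ translates into polynomial-ring freeness of ordinary group cohomology in degrees above the virtual cohomological dimension.

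For (a), I would check that in each summand indexed by $[\Gamma]\in\mathcal{K}_\ell$ the degree-four class $a_\Gamma$ is invertible. In the abelian case of part~(iii) of Theorem~\ref{thm:main1}, $a_\Gamma$ is the class $a_2^2$ inside the periodic ring $\mathbb{F}_\ell[a_2,a_2^{-1}]\otimes_{\mathbb{F}_\ell}\bigwedge(\ker\op{Nm}_1)$, which is manifestly a unit; in the dihedral case of part~(iv) it generates the subring $\mathbb{F}_\ell[a_2^2,a_2^{-2}]$ and is again a unit. Since the image of $c_2$ is the diagonal element $\sum_\Gamma a_\Gamma$ in the product decomposition, it is componentwise a unit and therefore a unit in $\widehat{\op{H}}^\bullet(\op{SL}_2(\mathcal{O}_{K,S}),\mathbb{F}_\ell)$. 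The map $\mathbb{F}_\ell[c_2]\to\widehat{\op{H}}^\bullet$ then factors uniquely through a ring homomorphism $\phi:\mathbb{F}_\ell[c_2,c_2^{-1}]\to\widehat{\op{H}}^\bullet$, and freeness as a $\mathbb{F}_\ell[c_2,c_2^{-1}]$-module of rank $2^{r+1}(2n_1+n_2)$ is exactly the freeness statement of the preceding theorem, proving part~(1).

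For (b), I would use that the canonical comparison $\op{H}^\bullet\to\widehat{\op{H}}^\bullet$ is an isomorphism in all degrees strictly greater than the virtual cohomological dimension (cf.\ Section~\ref{sec:pregrp}). Pick a homogeneous $\mathbb{F}_\ell[c_2,c_2^{-1}]$-basis $e_1,\ldots,e_N$ of $\widehat{\op{H}}^\bullet$ provided by part~(1), and for each $i$ let $k_i$ be the smallest non-negative integer for which $\deg(c_2^{k_i}e_i)$ exceeds the virtual cohomological dimension. Multiplying by a power of the invertible element $c_2$ does not disturb the basis property over the Laurent ring, so the family $\{c_2^{k_i}e_i\}_i$ is a basis of the portion of $\widehat{\op{H}}^\bullet$ in degrees above vcd as a free $\mathbb{F}_\ell[c_2]$-module. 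Transporting this to ordinary group cohomology in that range via the isomorphism above yields the Quillen freeness asserted in~(2).

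The bulk of the work has already been carried out in the preceding theorem, so what remains is essentially bookkeeping. The one point that requires a moment of thought is the descent in~(b): one needs to reconcile the $c_2$-periodic structure on Farrell--Tate cohomology with the purely polynomial $c_2$-action on group cohomology, and the shift argument above is the natural mechanism for doing so without any additional input beyond the fact that $c_2$ has positive degree and acts invertibly on $\widehat{\op{H}}^\bullet$.
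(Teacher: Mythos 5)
Your argument is correct and follows the same route the paper intends: the corollary is stated there without a separate proof, as an immediate consequence of the preceding theorem, and your two steps (the diagonal class $\sum_\Gamma a_\Gamma$ is componentwise a unit of degree $4$, so the map factors through $\mathbb{F}_\ell[c_2,c_2^{-1}]$ and freeness is the theorem's freeness statement; then transport a suitably shifted Laurent basis through the comparison isomorphism $\op{H}^\bullet\to\widehat{\op{H}}^\bullet$ above the vcd) are exactly the intended bookkeeping. One small slip in part (b): the shifts $k_i$ must be allowed to be arbitrary integers rather than non-negative ones (or the basis renormalized first), since for a basis element $e_i$ sitting more than $\deg c_2$ above the vcd the family $\{c_2^{k}e_i\}_{k\geq 0}$ would miss $c_2^{-1}e_i$, which still lies above the vcd.
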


\begin{remark}
As a result, we can reformulate the Quillen conjecture for group cohomology as a relation between Farrell--Tate cohomology of $\op{SL}_2(\mathcal{O}_{K,S})$ and the Steinberg homology $\op{H}_\bullet(\op{SL}_2(\mathcal{O}_{K,S}), \op{St}^{\op{SL}_2(\mathcal{O}_{K,S})}\otimes\mathbb{F}_\ell)$. This follows from the long exact sequence relating group cohomology, Farrell--Tate cohomology and Steinberg homology, cf. \cite{brown:book}: 
$$
\cdots\rightarrow \widehat{H}^{\bullet-1}(\Gamma)\rightarrow
H_{n-\bullet}(\Gamma,\operatorname{St}^{\Gamma})\rightarrow
H^\bullet(\Gamma)\rightarrow 
\widehat{H}^\bullet(\Gamma)\rightarrow\cdots
$$
Vanishing of Steinberg homology for $\op{SL}_2(\mathcal{O}_{K,S})$ guarantees the Quillen conjecture by the above result; however, it is possible that the Quillen conjecture is true even with non-vanishing Steinberg homology. The Quillen conjecture fails whenever the map 
$$
\op{H}^\bullet(\op{SL}_2(\mathcal{O}_{K,S}),\mathbb{F}_\ell)\to
\widehat{\op{H}}^\bullet(\op{SL}_2(\mathcal{O}_{K,S}),\mathbb{F}_\ell)
$$
is not injective. Then there exist elements in group cohomology which - after multiplication with some power of the second Chern class - become trivial. 
\end{remark}

\begin{remark}
A similar result can be formulated for $\op{PGL}_2(\mathcal{O}_{K,S})$ if $\ell\in K$, but we chose not to spell it out explicitly. A result for $\op{GL}_2(\mathcal{O}_{K,S})$ is not as easy to come by, for the following reason. The Farrell--Tate cohomology for $\op{SL}_2(\mathcal{O}_{K,S})$ and $\op{PGL}_2(\mathcal{O}_{K,S})$ is controlled by finite subgroups and their normalizers. However, the central $\ell$-subgroup of $\op{GL}_2(\mathcal{O}_{K,S})$ fixes the whole symmetric space, so that computations of Farrell--Tate cohomology for $\op{GL}_2$ are actually not significantly easier than the group cohomology computations. 
\end{remark}

Next, we want to discuss the detection of cohomology classes, as well as its relation to the Quillen conjecture. Using our explicit computations, we can easily find examples where the cohomology of $\op{SL}_2(\mathcal{O}_{K,S})$ cannot be detected on the diagonal matrices. The following general result, deducing non-detection from non-triviality of suitable class groups, is very much in the spirit of Dwyer's disproof of detection for $\op{GL}_{32}(\mathbb{Z}[1/2])$.

\begin{proposition}
\label{prop:detect}
Let $\ell$ be an odd prime number. Let $K$ be a number field with $\zeta_\ell\in K$, and let $S$ be a finite set of places containing the infinite places.  Assume that the class group $\op{Pic}(\mathcal{O}_{K,S})$ has more than $2$ elements. Then the restriction map 
$
\widehat{\op{H}}^\bullet(\op{SL}_2(\mathcal{O}_{K,S}),\mathbb{F}_\ell)\to 
\widehat{\op{H}}^\bullet(\op{T}_2(\mathcal{O}_{K,S}),\mathbb{F}_\ell)
$
from $\op{SL}_2(\mathcal{O}_{K,S})$ to the diagonal matrices $\op{T}_2(\mathcal{O}_{K,S})$ is not injective. 
\end{proposition}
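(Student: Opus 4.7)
The plan is to isolate a nonzero component of $\widehat{\op{H}}^\bullet(\op{SL}_2(\mathcal{O}_{K,S}),\mathbb{F}_\ell)$ in Brown's decomposition that cannot be seen by the diagonal torus. Since $\zeta_\ell\in K$, assumption (R2) is the relevant one, and Theorems~\ref{thm:main1}(ii) and \ref{thm:conjsl2}(ii) give
\[
\widehat{\op{H}}^\bullet(\op{SL}_2(\mathcal{O}_{K,S}),\mathbb{F}_\ell)\cong\prod_{[\Gamma]\in\mathcal{K}_\ell}\widehat{\op{H}}^\bullet(N_G(\Gamma),\mathbb{F}_\ell),\qquad \mathcal{K}_\ell\cong\op{Pic}(\mathcal{O}_{K,S})/\iota,
\]
where $\iota$ acts by inversion. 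A Burnside count yields $|\mathcal{K}_\ell|=(|\op{Pic}|+|\op{Pic}[2]|)/2\geq 2$ whenever $|\op{Pic}|>2$, and each factor in the product is nonzero.

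Next I would identify the image of $\op{T}_2(\mathcal{O}_{K,S})$ inside $\mathcal{K}_\ell$. Up to conjugacy in $\op{T}_2(\mathcal{O}_{K,S})$, the only cyclic $\ell$-subgroup is $\Gamma_0=\langle\op{diag}(\zeta_\ell,\zeta_\ell^{-1})\rangle$. Tracing the bijection of Proposition~\ref{prop:biject} through the ring splitting $R_{K,S,\ell}\cong\mathcal{O}_{K,S}\times\mathcal{O}_{K,S}$ from Proposition~\ref{prop:split}: the standard lattice $\mathcal{O}_{K,S}^2\subseteq K^2\cong K(\Psi_\ell)$, where $T$ acts as $\op{diag}(\zeta_\ell,\zeta_\ell^{-1})$, coincides with $R_{K,S,\ell}$ itself, so $[\Gamma_0]$ corresponds to the trivial ideal class. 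Consequently every class $[\Gamma]\in\mathcal{K}_\ell$ distinct from the trivial one is \emph{not} represented by a subgroup of $\op{T}_2(\mathcal{O}_{K,S})$.

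The final ingredient is a general functoriality principle for Brown's decomposition under the inclusion $H=\op{T}_2(\mathcal{O}_{K,S})\hookrightarrow G$: a class in the $[\Gamma]_G$-component restricts to zero in $\widehat{\op{H}}^\bullet(H,\mathbb{F}_\ell)$ whenever no cyclic $\ell$-subgroup of $H$ is $G$-conjugate to $\Gamma$. Indeed, $\widehat{\op{H}}^\bullet(H,\mathbb{F}_\ell)$ injects (by Brown applied to $H$) into a product of Weyl-invariant parts $\widehat{\op{H}}^\bullet(\Gamma',\mathbb{F}_\ell)^{W_H(\Gamma')}$ over the cyclic $\ell$-subgroups $\Gamma'\subseteq H$; by transitivity of restriction, each composite $\widehat{\op{H}}^\bullet(G)\to\widehat{\op{H}}^\bullet(H)\to\widehat{\op{H}}^\bullet(\Gamma')$ equals $\op{res}^G_{\Gamma'}$, which factors through the $[\Gamma']_G$-projection of Brown's decomposition and therefore kills every $[\Gamma]_G$-component with $[\Gamma]_G\neq[\Gamma']_G$. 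Combining the three steps: any $[\Gamma]\neq[\Gamma_0]$ in $\mathcal{K}_\ell$, which exists by $|\mathcal{K}_\ell|\geq 2$, contributes a nonzero summand lying entirely in the kernel of the restriction, so restriction fails to be injective. The main subtlety I expect to navigate is making the functoriality step rigorous; this should follow formally from the characterization of the Brown decomposition as the product of restrictions to cyclic $\ell$-subgroups.
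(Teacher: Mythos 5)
Your strategy --- isolate a component of Brown's decomposition indexed by a cyclic subgroup not conjugate into the torus and show it lies in the kernel of restriction --- differs from the paper's and could in principle work, but the justification of your key functoriality step rests on a false claim. You assert that $\widehat{\op{H}}^\bullet(\op{T}_2(\mathcal{O}_{K,S}),\mathbb{F}_\ell)$ injects into the product of the groups $\widehat{\op{H}}^\bullet(\Gamma',\mathbb{F}_\ell)^{W_H(\Gamma')}$ over the cyclic $\ell$-subgroups $\Gamma'$ of the torus. That is not Brown's formula: the factors there are $\widehat{\op{H}}^\bullet(C_H(\Gamma'),\mathbb{F}_\ell)^{W_H(\Gamma')}$, i.e.\ cohomology of the \emph{centralizers}, not of the cyclic subgroups themselves. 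Concretely, $\op{T}_2(\mathcal{O}_{K,S})\cong\mathcal{O}_{K,S}^\times\cong\mathbb{Z}/n\times\mathbb{Z}^r$ with $r\geq 1$ under (R2), its unique order-$\ell$ subgroup $\Gamma'$ has trivial Weyl group, and the restriction $\mathbb{F}_\ell[a_2,a_2^{-1}](b_1,x_1,\dots,x_r)\to\mathbb{F}_\ell[a_2,a_2^{-1}](b_1)$ kills every exterior class $x_i$, so it is never injective in the cases at hand. Without that injectivity, knowing the composites $\widehat{\op{H}}^\bullet(G)\to\widehat{\op{H}}^\bullet(H)\to\widehat{\op{H}}^\bullet(\Gamma')$ tells you nothing about the kernel of the first map, and your conclusion that the components with $[\Gamma]\neq[\Gamma_0]$ restrict to zero does not follow as written. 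The functoriality principle you want (components indexed by $G$-classes not represented by any subgroup of $H$ restrict to zero) is in fact true --- it is exactly what the paper invokes in case (C\,3) of Proposition~\ref{8.1} --- but it has to be extracted from the naturality of the equivariant-cohomology construction underlying Brown's decomposition with respect to the inclusion $H\hookrightarrow G$, not from restriction to the finite cyclic subgroups. The other ingredients of your proposal (the Burnside count giving $|\mathcal{K}_\ell|\geq 2$, and the identification of the torus component with the trivial ideal class via Proposition~\ref{prop:biject}) are correct.

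The paper sidesteps the whole issue with a dimension count that requires no information about the restriction map. By Proposition~\ref{prop:free}, source and target are free over the Laurent subring generated by $a_2^2$: the target $\widehat{\op{H}}^\bullet(\op{T}_2(\mathcal{O}_{K,S}),\mathbb{F}_\ell)$ has rank $2^{r+2}$; the component of the trivial ideal class has a dihedral normalizer and contributes only the invariants, of rank $2^{r+1}$; and the hypothesis $|\op{Pic}(\mathcal{O}_{K,S})|>2$ forces either two further $\iota$-invariant classes or one free $\iota$-orbit, contributing another $2^{r+2}$ in either case. Since $2^{r+1}+2^{r+2}>2^{r+2}$, the source has strictly larger $\mathbb{F}_\ell$-dimension than the target in some degree, so no injective map can exist at all. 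To repair your argument you should either prove the naturality statement for Brown's decomposition or fall back on this rank comparison.
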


\begin{proof}
Under the assumptions of the proposition, Theorem~\ref{thm:conjsl2}(1)  implies that the splitting of Theorem~\ref{thm:main1} becomes 
$$
\widehat{\op{H}}^\bullet(\op{SL}_2(\mathcal{O}_{K,S}),\mathbb{F}_\ell)\cong 
\bigoplus_{[\Gamma]\in
  \op{Pic}(\mathcal{O}_{K,S})/\iota}
\widehat{\op{H}}^\bullet(N_G(\Gamma),\mathbb{F}_\ell).
$$
The group  $N_G(\Gamma)$ is the normalizer in $G = \op{SL}_2(\mathcal{O}_{K,S})$ of the order $\ell$ subgroup representing $[\Gamma]$. Recall that the Farrell--Tate cohomology $\widehat{\op{H}}^\bullet(N_G(\Gamma))$ is obtained by making the cohomology rings from Proposition~\ref{prop:free} periodic for $a_2$; in particular, all  exterior products with an odd number of factors live in odd degree, products with even number of factors live in even degree. In both cases, half of the elements is invariant under multiplication with $-1$. For the trivial ideal class, we have a contribution of half the rank of $\widehat{\op{H}}^\bullet(\op{T}_2(\mathcal{O}_{K,S}),\mathbb{F}_\ell)$.  Under the assumption on the class group, we either have two further $\iota$-invariant ideal classes or a $\iota$-orbit. In either case, the resulting direct summands in $\bigoplus_{[\Gamma]\in \op{Pic}(\mathcal{O}_{K,S})/\iota} \widehat{\op{H}}^\bullet(N_G(\Gamma),\mathbb{F}_\ell)$ yield a further contribution equal to the rank of $\widehat{\op{H}}^\bullet(\op{T}_2(\mathcal{O}_{K,S}),\mathbb{F}_\ell)$. Therefore, the restriction map cannot be injective  because the rank of the source is bigger than the rank of the target. 
\end{proof}
This completes the proof of Theorem~\ref{thm:main3}.

\begin{remark}
There are other cases in which non-detection results can be established. The above proposition is one of the easier ones to formulate, the cases where $\zeta_\ell\not\in K$ need some more complicated conditions. 
\end{remark}

\begin{example}
\label{ex:detect}
Let $K=\mathbb{Q}(\zeta_{23})$ and $S=\{(23)\}\cup S_\infty$. The $S$-class group of $K$ has order $3$. The induced morphism  
$$
\widehat{\op{H}}^\bullet(\op{SL}_2(\mathcal{O}_{K,S}),\mathbb{F}_{23})\to
\widehat{\op{H}}^\bullet(\op{T}_2(\mathcal{O}_{K,S}),\mathbb{F}_{23})
$$
is not injective - the source has two copies of the cohomology of a dihedral extension of $\mathcal{O}_{K,S}^\times$, but the target has only one copy of the cohomology of $\mathcal{O}_{K,S}^\times$. 
\end{example}

\begin{example}
\label{ex:detect2}
There are infinitely many counterexamples to detection at the prime~$3$. 

Let $m$ be a positive square-free integer such that $m\equiv 1\mod 3$. In this case, the prime $3$ is inert in the extension $\mathbb{Q}(\sqrt{-m})/\mathbb{Q}$ and ramified in $\mathbb{Q}(\zeta_3)/\mathbb{Q}$. In particular, there is only one place of $\mathbb{Q}(\sqrt{-m},\zeta_3)$ lying over the place $v_3$ of $\mathbb{Q}$, and this place is ramified in the extension $\mathbb{Q}(\sqrt{-m},\zeta_3)/\mathbb{Q}(\sqrt{-m})$. The extension $\mathbb{Q}(\sqrt{-m},\zeta_3)/\mathbb{Q}(\sqrt{-m})$ being ramified, the induced map on class groups must be injective, by class field theory. Moreover, as noted above, there is a unique prime ideal $\mathfrak{p}$ in $\mathcal{O}_{\mathbb{Q}(\sqrt{-m},\zeta_3)}$ lying above $(3)$, and $\mathfrak{p}^2=(3)$. The class group of $\mathcal{O}_{\mathbb{Q}(\sqrt{-m},\zeta_3)}[1/3]$ is obtained by killing the $2$-torsion class of $\mathfrak{p}$, hence its size is either equal or half the size of the class group of $\mathcal{O}_{\mathbb{Q}(\sqrt{-m},\zeta_3)}$ . 

We conclude that half the class number of $\mathbb{Q}(\sqrt{-m})$ is a lower  bound for the size of the class group of $\mathcal{O}_{\mathbb{Q}(\sqrt{-m},\zeta_3)}[1/3]$. By the theorem of Heilbronn, there are infinitely many $m$ such that $\mathbb{Q}(\sqrt{-m})$ has class number $>4$, hence there are infinitely many $S$-integer rings of the form $\mathcal{O}_{\mathbb{Q}(\sqrt{-m},\zeta_3)}[1/3]$ whose class group has more than two elements. Each such ring $R$ gives an example where the restriction map 
$$
\op{H}^\bullet(\op{GL}_2(R),\mathbb{F}_3)\to
\op{H}^\bullet(\op{T}_2(R),\mathbb{F}_3)
$$
fails to be injective, but for which the Quillen conjecture is true above the virtual cohomological dimension. 
\end{example}

The question for unstable analogues of the Quillen--Lichtenbaum conjecture was implicit in \cite{dwyer:friedlander}, and was raised explicitly in \cite{anton:roberts}. The results of \cite{dwyer:friedlander} show that the unstable Quillen--Lichtenbaum conjecture in the situation of linear groups over $S$-integers implies detection. The failure of detection as in Proposition~\ref{prop:detect} and Example~\ref{ex:detect} also implies the failure of the unstable Quillen--Lichtenbaum conjecture.  

The above results and their consequences for the Quillen conjecture are further discussed in \cite{RW14}. In short, the Quillen conjecture for Farrell--Tate cohomology is more related to the subgroup structure of $\op{SL}_2(\mathcal{O}_{K,S})$ and actually happens to be almost tautologically true in such small rank. On the other hand, the Quillen conjecture for group cohomology is more related to ``something like cusp forms'', lying in the difference between Farrell--Tate cohomology and group cohomology. Finally, detection questions, while a powerful method to provide counterexamples to Quillen's conjecture, are more related to the conjugacy classification of finite subgroups. Hence, for the rank one groups $\op{SL}_2(\mathcal{O}_{K,S})$, Quillen's conjecture and detection questions are only superficially related.

\section{Application II: on the existence of transfers}
\label{sec:transfers}

Next, we are interested in the existence of transfer maps in Farrell--Tate cohomology as well as group cohomology. Transfers in the cohomology of linear groups have been suggested as one way of establishing the Friedlander--Milnor conjecture in \cite{knudson:book}*{section 5.3}. In this section, we show examples that demonstrate the impossibility of defining transfers on group (co-)homology with reasonable properties. In particular, we are going to prove Theorem~\ref{thm:main4}. The general setup in this section will be the following: 
\begin{quote}
Let $L/K$ be a  degree $n$ extension of global fields, let $S$ be a set of places of $K$ and denote by $\tilde{S}$ the set of places of $L$ lying over $S$. Let $\ell$ be a prime different from the characteristic of $K$.
\end{quote}

\subsection{Definition of transfer maps}

We first recall various relevant notions of transfers. For a finite covering $p:E\to B$ of CW-complexes, there is a transfer map $\op{tr}_p:H_\bullet(B)\to H_\bullet(E)$ such that the respective composition is multiplication with the degree:  $\op{tr}_p\circ\, p_\bullet=\deg p$.  There are similar transfer maps for cohomology. More generally, the Becker--Gottlieb transfer provides a wrong-way stable map $\Sigma^\infty B\to \Sigma^\infty E$ which induces the transfer on cohomology theories.
This can be applied to group (co-)homology to recover the classical definition of transfer: for $H\subset G$ a finite-index subgroup of a group $G$ and a $G$-module $M$,  there are transfer maps 
$$
\op{tr}^G_H:\op{H}_\bullet(G,M)\to \op{H}_\bullet(H,M)\qquad
\textrm{ and }\qquad \op{Cor}^G_H:\op{H}^\bullet(H,M)\to\op{H}^\bullet(G,M),
$$
such that the following respective equalities hold where $i:H\to G$ denotes the inclusion of the subgroup:
$$
i_\bullet\circ\op{tr}^G_H=[G:H] \qquad\textrm{ and }\qquad
\op{Cor}^G_H\circ\, i^\bullet=[G:H]
$$
These can be obtained from the (usual or Becker--Gottlieb) transfer applied to the finite covering $BH\to BG$. 

Similar transfer maps can be considered in algebraic K-theory: for a finite flat map $f:X\to Y$ of schemes, there is a K-theory transfer $\op{tr}_f:\op{K}_\bullet(X)\to\op{K}_\bullet(Y)$ such that the composition is the multiplication with the degree: $\op{tr}_f\circ f^\ast=\deg f$. There are a number of generalizations of this concept. We use one of the simpler ones, cf. \cite{knudson:book}*{section 5.3}: an abelian group-valued functor $\mathcal{F}:\op{Sch}^{\op{op}}\to \mathcal{A}b$ is said to admit transfers if for any finite flat morphism $f:X\to Y$, there is a homomorphism $\op{tr}_f:\mathcal{F}(X)\to\mathcal{F}(Y)$ such that we have $\op{tr}_f\circ\mathcal{F}(f)=\deg f$.

We can apply this definition to group homology: for fixed $i\in\mathbb{N}$, we have a functor on the category of smooth affine $k$-schemes 
$$
\op{H}_i(\op{SL}_2(-),\mathbb{Z}/\ell):\op{Sm}^{\op{op}}_{\op{aff}}/k\to
\mathcal{A}b: 
\op{Spec} k[X]\mapsto \op{H}_i(\op{SL}_2(k[X]),\mathbb{Z}/\ell).
$$
One can ask a similar question for cohomology, i.e., if for any finite flat map $f:\op{Spec}k[X]\to\op{Spec}k[Y]$ of affine schemes there is a transfer homomorphism $\op{tr}^f:\op{H}^\bullet(\op{SL}_2(k[Y]),\mathbb{Z}/\ell)\to \op{H}^\bullet(\op{SL}_2(k[X]),\mathbb{Z}/\ell)$ such that the composition is multiplication with the degree: $\op{H}^\bullet(f)\circ\op{tr}^f=\deg f$. 
It is an implicit question in the discussion of \cite{knudson:book}*{section 5.3} if group homology functors as above can be equipped with transfers. To quote Knudson, cf. p.134 of loc.cit.: "Unfortunately, there appears to be no  way to equip these functors with transfer maps." In the following section, we want to make this precise: we will compute the restriction maps in Farrell-Tate cohomology, and from these computations it will be obvious that it is not possible to define transfer maps satisfying the degree condition. Similar counterexamples for function fields can be deduced from the computations of \cite{sl2parabolic}.

\subsection{Description of restriction maps}

To establish examples where transfers cannot exist, we need to determine the restriction maps for extensions of $S$-integer rings:
$$
\widehat{\op{H}}^\bullet(\op{SL}_2(\mathcal{O}_{L,\tilde{S}}),
\mathbb{F}_\ell)\rightarrow  
\widehat{\op{H}}^\bullet(\op{SL}_2(\mathcal{O}_{K,S}),\mathbb{F}_\ell).
$$

From Theorem~\ref{thm:main1},  we have a splitting of Farrell--Tate cohomology
$$
\widehat{\op{H}}^\bullet(\op{SL}_2(\mathcal{O}_{K,S}),\mathbb{F}_\ell)\cong
\bigoplus_{[\Gamma]\in\mathcal{K}}
\widehat{\op{H}}^\bullet(N_G(\Gamma),\mathbb{F}_\ell), 
$$
where the set $\mathcal{K}$ is given by the quotient of an
oriented relative class group modulo the Galois action, and the group
$N_G(\Gamma)$ is the  normalizer of the order $\ell$ subgroup
$\Gamma < G = \op{SL}_2(\mathcal{O}_{K,S})$.  
There are two types of components: if $\Gamma$ is not contained in a
dihedral group, then $N_G(\Gamma)$ is abelian, determined by a
relative unit group. Otherwise,  $N_G(\Gamma)$ is non-abelian
and it is a semidirect product of a relative unit group with the finite group
$\mathbb{Z}/2\mathbb{Z}$ acting via inversion.

Now, given an extension of  rings of $S$-integers $\mathcal{O}_{K,S}\to\mathcal{O}_{L,\tilde{S}}$, the following changes in the classification of finite subgroups can appear: 
\begin{enumerate}[({C }1)]
\item Several non-conjugate cyclic subgroups of order $\ell$ in  $\op{SL}_2(\mathcal{O}_{K,S})$ can become conjugate in $\op{SL}_2(\mathcal{O}_{L,\tilde{S}})$. 
\item A cyclic group which is not contained in a dihedral group in $\op{SL}_2(\mathcal{O}_{K,S})$ acquires a dihedral overgroup in $\op{SL}_2(\mathcal{O}_{L,\tilde{S}})$. 
\item There appear several new cyclic subgroups of order $\ell$ in $\op{SL}_2(\mathcal{O}_{L,\tilde{S}})$ which are not conjugate to subgroups coming from $\op{SL}_2(\mathcal{O}_{K,S})$. 
\end{enumerate}

The following proposition describes the restriction maps in Farrell--Tate cohomology in each of the above three cases; it is a straightforward consequence of the above statements, and Theorem~\ref{thm:main1}.

\begin{proposition} \label{8.1}
Fix an odd prime $\ell$. Let $K$ be a global field of characteristic different from $\ell$, let $S$ be a non-empty finite set of places containing the infinite ones. Let $L/K$ be a finite separable extension of $K$, and let $\tilde{S}$ be a finite set of places containing those places lying over $S$. 
\begin{enumerate}[({C }1)]
\item Assume that exactly the classes $[\Gamma_1],\dots,[\Gamma_m]\in\mathcal{K}(K)$ become identified to a single component $[\Gamma]\in\mathcal{K}(L)$. Then the restriction map 
$$
\widehat{\op{H}}^\bullet(
N_{\op{SL}_2(\mathcal{O}_{L,\tilde{S}})}(\Gamma),\mathbb{F}_\ell)\to  
\bigoplus_{[\Gamma_i]}
\widehat{\op{H}}^\bullet(N_{\op{SL}_2(\mathcal{O}_{K,S})}(\Gamma_i),\mathbb{F}_\ell)  
$$
is the sum of the natural maps  induced from the inclusions
\\
$N_{\op{SL}_2(\mathcal{O}_{K,S})}(\Gamma_i)\to N_{\op{SL}_2(\mathcal{O}_{L,\tilde{S}})}(\Gamma)$.  
\item Assume that a cyclic group representing the class $[\Gamma]\in\mathcal{K}(K)$ is not contained in a dihedral group over $K$, but is contained in a dihedral group over $L$. Then the restriction map 
$$
\widehat{\op{H}}^\bullet(N_{\op{SL}_2(\mathcal{O}_{L,\tilde{S}})}(\Gamma),\mathbb{F}_\ell)\to 
\widehat{\op{H}}^\bullet(N_{\op{SL}_2(\mathcal{O}_{K,S})}(\Gamma),\mathbb{F}_\ell) 
$$
is given by the natural inclusion of fixed points. 
\item The restriction map is trivial on the new components of $\mathcal{K}(L)$. 
\end{enumerate}
\end{proposition}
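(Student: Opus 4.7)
The plan is to deduce the result from two ingredients: the naturality of the Brown-type decomposition from Theorem~\ref{thm:main1} with respect to the inclusion $\op{SL}_2(\mathcal{O}_{K,S}) \hookrightarrow \op{SL}_2(\mathcal{O}_{L,\tilde{S}})$, together with the explicit descriptions of normalizers from Theorem~\ref{thm:conjsl2}. First I would establish the appropriate Mackey-type naturality: for each $[\Gamma']\in\mathcal{K}(L)$, the composition
$$
\widehat{\op{H}}^\bullet(N_L(\Gamma'),\mathbb{F}_\ell) \hookrightarrow \widehat{\op{H}}^\bullet(\op{SL}_2(\mathcal{O}_{L,\tilde{S}}),\mathbb{F}_\ell) \to \widehat{\op{H}}^\bullet(\op{SL}_2(\mathcal{O}_{K,S}),\mathbb{F}_\ell)
$$
should land in the direct sum over those $[\Gamma]\in\mathcal{K}(K)$ whose representatives are $\op{SL}_2(\mathcal{O}_{L,\tilde{S}})$-conjugate to $\Gamma'$, and on each summand should be induced (after conjugating to arrange $\Gamma = \Gamma'$) by the inclusion $N_K(\Gamma) \hookrightarrow N_L(\Gamma')$. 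The cleanest route is to exploit the remark in Section~\ref{sec:formulas} reinterpreting the decomposition through the isotropy spectral sequence for the action of $\op{SL}_2(\mathcal{O}_{L,\tilde{S}})$ on its symmetric space $\mathfrak{X}_{L,\tilde{S}}$: the $\ell$-torsion subcomplex is a disjoint union of classifying spaces for the normalizers $N_L(\Gamma')$, and restricting to $\op{SL}_2(\mathcal{O}_{K,S})$-orbits visibly produces the Mackey formula.

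Granting this, each case is a direct verification. For (C1), the Mackey formula applied to $[\Gamma] \in \mathcal{K}(L)$ with preimages $[\Gamma_1],\dots,[\Gamma_m]$ gives precisely the claimed sum over the inclusions $N_K(\Gamma_i) \hookrightarrow N_L(\Gamma)$. For (C3), no class of $\mathcal{K}(K)$ maps to the given class of $\mathcal{K}(L)$, so the sum in the Mackey formula is empty and the restriction on that component vanishes.

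The substantial case is (C2). Here Proposition~\ref{prop:aut} and the absence of a dihedral overgroup over $K$ force $N_K(\Gamma) = C_K(\Gamma) = \ker\op{Nm}_1$, while over $L$ the normalizer is $N_L(\Gamma) \cong C_L(\Gamma) \rtimes \mathbb{Z}/2$ with the quotient acting by inversion. Since the inclusion $\op{SL}_2(\mathcal{O}_{K,S}) \hookrightarrow \op{SL}_2(\mathcal{O}_{L,\tilde{S}})$ carries $N_K(\Gamma)$ into $C_L(\Gamma)$, i.e., into the kernel of the projection $N_L(\Gamma) \twoheadrightarrow \mathbb{Z}/2$, the restriction factors as
$$
\widehat{\op{H}}^\bullet(N_L(\Gamma),\mathbb{F}_\ell) \cong \widehat{\op{H}}^\bullet(C_L(\Gamma),\mathbb{F}_\ell)^{\mathbb{Z}/2} \hookrightarrow \widehat{\op{H}}^\bullet(C_L(\Gamma),\mathbb{F}_\ell) \to \widehat{\op{H}}^\bullet(C_K(\Gamma),\mathbb{F}_\ell),
$$
where the first isomorphism is the Hochschild--Serre degeneration from Section~\ref{sec:formulas} and the last map is the ordinary restriction. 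This is exactly the ``natural inclusion of fixed points'' of the statement. The main obstacle I foresee is the careful formulation of the Mackey-type naturality for Brown's decomposition in this non-finite-index setting; once the bookkeeping is in place, the three cases fall out from the normalizer descriptions already at our disposal.
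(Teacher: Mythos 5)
Your proposal is correct and follows the route the paper intends: the paper offers no written proof beyond asserting that the proposition is ``a straightforward consequence of the above statements, and Theorem~\ref{thm:main1}'', and your Mackey-type naturality of Brown's decomposition together with the normalizer descriptions from Theorem~\ref{thm:conjsl2} is precisely the bookkeeping being left implicit. Your treatment of (C2) via the Hochschild--Serre degeneration and the observation that $N_K(\Gamma)=C_K(\Gamma)$ lands in $C_L(\Gamma)$ correctly identifies the map as the inclusion of $\mathbb{Z}/2$-fixed points.
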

This yields claims (1) and (2) of Theorem \ref{thm:main4}.

\subsection{Non-existence of transfers}

\begin{theorem}
\label{thm:notr}
The functor ``Farrell--Tate cohomology of $\op{SL}_2$'' given by 
$$
A\mapsto \widehat{\op{H}}^\bullet(\op{SL}_2(A),\mathbb{F}_\ell)
$$ 
does not admit transfers. More precisely, there exists a finite flat morphism of commutative rings $\phi:A\rightarrow B$ such that for no morphism 
$$
\operatorname{tr}:
\widehat{\op{H}}^\bullet(\op{SL}_2(A),\mathbb{F}_\ell)\rightarrow 
\widehat{\op{H}}^\bullet(\op{SL}_2(B),\mathbb{F}_\ell)
$$
we have $\phi^\bullet\circ \operatorname{tr}=\deg\phi$.
\end{theorem}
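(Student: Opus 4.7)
The plan is to combine the explicit non-surjectivity statement of Theorem~\ref{thm:main4} with a degree-coprimality argument. Set $\ell=23$, let $K=\mathbb{Q}(\zeta_{23})$, let $H$ be its Hilbert class field, and take $A=\mathcal{O}_K[\ell^{-1}]$, $B=\mathcal{O}_H[\ell^{-1}]$. The natural inclusion $\phi\colon A\to B$ is finite \'etale, hence finite flat, of degree $[H\colon K]=h_K=3$. The decisive numerical fact is that this degree is coprime to $\ell=23$, so $\deg\phi$ is a unit in $\mathbb{F}_\ell$.

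The first step is to invoke Theorem~\ref{thm:main4} to conclude that the restriction map
$$
\phi^\bullet\colon \widehat{\op{H}}^\bullet(\op{SL}_2(B),\mathbb{F}_\ell)\longrightarrow \widehat{\op{H}}^\bullet(\op{SL}_2(A),\mathbb{F}_\ell)
$$
is not surjective. The underlying mechanism is as follows. Assumption (R2) is satisfied for both rings, and Theorem~\ref{thm:conjsl2}(2) identifies $\mathcal{K}_\ell(A)$ with $\op{Pic}(\mathcal{O}_{K,S})/\iota$, which has two elements (the trivial class and the $\iota$-orbit of the two non-trivial classes in $\mathbb{Z}/3\mathbb{Z}$), while $\mathcal{K}_\ell(B)$ has a single element because $\op{Pic}(\mathcal{O}_{H,\tilde S})$ is trivial by definition of the Hilbert class field. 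Under $\phi^\bullet$, the situation falls into case (C1) of Proposition~\ref{8.1}: the unique component on the $B$-side maps, via a single restriction-of-normalizers map, into a direct sum of two components on the $A$-side. A rank count in low degree using the exterior-algebra description of Proposition~\ref{6.1} then exhibits classes in the target not contained in the diagonal-like image.

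The second step is the formal degree argument. Suppose for contradiction there existed
$$
\op{tr}\colon\widehat{\op{H}}^\bullet(\op{SL}_2(A),\mathbb{F}_\ell)\longrightarrow \widehat{\op{H}}^\bullet(\op{SL}_2(B),\mathbb{F}_\ell)
$$
with $\phi^\bullet\circ\op{tr}=\deg\phi\cdot\op{id}$. Since $\deg\phi=3$ is a unit in $\mathbb{F}_{23}$, the map $(\deg\phi)^{-1}\cdot\op{tr}$ is an $\mathbb{F}_\ell$-linear section of $\phi^\bullet$, so $\phi^\bullet$ would be surjective, contradicting the previous step.

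The main obstacle is the non-surjectivity in step one, which requires the full conjugacy classification of finite cyclic subgroups (Theorem~\ref{thm:conjsl2}), the Brown-formula splitting (Theorem~\ref{thm:main1}), and the explicit description of restriction maps on components (Proposition~\ref{8.1}). All of this machinery is already in place earlier in the paper, so once Theorem~\ref{thm:main4} is proven the remaining content of Theorem~\ref{thm:notr} is the short linear-algebra observation of step two, supplemented by the verification that the class number $3$ is prime to $\ell=23$.
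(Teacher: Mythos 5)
Your proof is correct and follows essentially the same route as the paper's: the same example ($K=\mathbb{Q}(\zeta_{23})$, its Hilbert class field $H$, with $23$ inverted), the same mechanism (situation (C1) with $m=2$ forces the image of the restriction map into a proper ``diagonal'' subspace, while multiplication by $\deg\phi=3$ is invertible in $\mathbb{F}_{23}$), and the same formal conclusion. One small correction: $\op{Pic}(\mathcal{O}_{H,\tilde S})$ need not be trivial ``by definition of the Hilbert class field'' (that would be a statement about the class field tower); what you actually need, and what the paper invokes, is the principal ideal theorem, namely that the capitulation map $\op{Pic}(\mathcal{O}_{K,S})\to\op{Pic}(\mathcal{O}_{H,\tilde S})$ is trivial, which already guarantees that the two components of $\mathcal{K}_\ell(A)$ merge into a single component of $\mathcal{K}_\ell(B)$.
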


\begin{proof}
Let $A\rightarrow B$ be any extension $\mathcal{O}_{K,S}\rightarrow \mathcal{O}_{L,\tilde{S}}$ of degree prime to $\ell$ such that situation (C 1) occurs with $m>1$. For example, we can take a prime $\ell$ and a number field $K$ with $\zeta_\ell\in K$ such that $\mathcal{O}_{K}$ has non-trivial class group and the  Hilbert class field $L$ for $\mathcal{O}_K$ has degree $[L:K]$ prime to $\ell$. An explicit example for this type of situation is given in Example~\ref{ex:q23} below.

Let $m>1$ be the number of classes of order $\ell$ subgroups which become conjugate to $\Gamma$. By Proposition~\ref{8.1}, the restriction map is  
$$
\widehat{\op{H}}^\bullet(
N_{\op{SL}_2(\mathcal{O}_{L,\tilde{S}})}(\Gamma),\mathbb{F}_\ell)\to 
\bigoplus_{i = 1}^{m}
\widehat{\op{H}}^\bullet(N_{\op{SL}_2(\mathcal{O}_{K,S})}(\Gamma_i),\mathbb{F}_\ell).
$$
The composition  $\phi^\bullet\circ \operatorname{tr}$ cannot be surjective, because the restriction map has its image contained in the diagonal subring. By the previous choice of $\ell$ and $[L:K]$, multiplication with the degree $[L:K]$ will have  full rank. Therefore, the two  maps cannot be equal, no matter how we choose $\operatorname{tr}$. 
\end{proof}

\begin{example}
\label{ex:q23}
To give a specific example, consider $K=\mathbb{Q}(\zeta_{23})$ with $S=S_\infty\cup S_{(23)}$. Its class number is $3$, and its group of units is $\mathbb{Z}^{11}\times\mathbb{Z}/46\mathbb{Z}$. Therefore, we have 
$$
\widehat{\op{H}}^\bullet(\op{SL}_2(\mathcal{O}_{\mathbb{Q}(\zeta_{23})}[1/23]),
\mathbb{F}_{23})\cong 
\widehat{\op{H}}^\bullet(\mathbb{Z}^{11}\times\mathbb{Z}/46\mathbb{Z},
\mathbb{F}_{23})\oplus
\widehat{\op{H}}^\bullet(\mathbb{Z}^{11}\times\mathbb{Z}/46\mathbb{Z},
\mathbb{F}_{23})^{\langle -1\rangle}
$$
i.e., the Farrell--Tate cohomology of $\op{SL}_2(\mathcal{O}_{\mathbb{Q}(\zeta_{23})}[1/23])$ is the direct sum of one  copy of the Farrell--Tate cohomology of the unit group and one copy of the $(-1)$-invariants of Farrell--Tate cohomology. Note that we have $2$ summands because this is the cardinality of the quotient of the class group $\mathbb{Z}/3\mathbb{Z}$ modulo the inversion involution. The Hilbert class field of $\mathbb{Q}(\zeta_{23})$  is a degree $3$ unramified extension $H/\mathbb{Q}(\zeta_{23})$ such that the restriction map $\op{Pic}(\mathcal{O}_{\mathbb{Q}(\zeta_{23})})\to \op{Pic}(H)$ is the trivial map. In particular, the restriction map on Farrell--Tate cohomology will factor through the diagonal map 
$$
\widehat{\op{H}}^\bullet(\op{T}_2(\mathcal{O}_{H}[1/23])\rtimes\mathbb{Z}/2, 
\mathbb{F}_{23})\to 
\widehat{\op{H}}^\bullet(\mathbb{Z}^{11}\times\mathbb{Z}/46\mathbb{Z},
\mathbb{F}_{23})\oplus
\widehat{\op{H}}^\bullet(\mathbb{Z}^{11}\times\mathbb{Z}/46\mathbb{Z},
\mathbb{F}_{23})^{\langle -1\rangle}
$$
No matter how the transfer map 
$$
\widehat{\op{H}}^\bullet(\op{SL}_2(\mathcal{O}_{\mathbb{Q}(\zeta_{23})}[1/23]),
\mathbb{F}_{23})\to
\widehat{\op{H}}^\bullet(\op{SL}_2(\mathcal{O}_H[1/23]),\mathbb{F}_{23})
$$
is defined, the composition with the restriction map will not be multiplication with~$3$ on  $\widehat{\op{H}}^\bullet(\op{SL}_2(\mathcal{O}_{\mathbb{Q}(\zeta_{23})}[1/23])$: multiplication with $3$ is invertible in $\mathbb{F}_{23}$, but there are many classes which are not in the image of the composition. 
\end{example}

\begin{remark}
Similarly, the situation (C 2) obstructs transfers because the inclusion of fixed points is not surjective. The situation (C 3) does not obstruct transfers. 
\end{remark}

\begin{corollary}
\label{cor:ntr}
The functor ``group cohomology of $\op{SL}_2$'', which is given by \\
\mbox{$A\mapsto \op{H}^\bullet(\op{SL}_2(A),\mathbb{F}_\ell)$}, does not admit transfers.   
\end{corollary}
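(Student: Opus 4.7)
The plan is to reduce Corollary~\ref{cor:ntr} to Theorem~\ref{thm:notr} by exploiting the fact that Farrell--Tate cohomology and group cohomology agree above the virtual cohomological dimension. The virtual cohomological dimension of $\op{SL}_2(\mathcal{O}_{K,S})$ is finite (explicitly $2r_1+3r_2+\#S_f-1$ in the number field case), so it suffices to produce a witness in sufficiently high degree.

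Concretely, I would reuse the extension from Example~\ref{ex:q23}: take $K=\mathbb{Q}(\zeta_{23})$, $S=S_\infty\cup S_{(23)}$ and $L=H$ the Hilbert class field of $K$, so that $[L:K]=3$ is a unit in $\mathbb{F}_{23}$. By Proposition~\ref{8.1}(C1) and the identifications in Example~\ref{ex:q23}, the restriction map on Farrell--Tate cohomology factors through the diagonal inclusion
$$
\widehat{\op{H}}^\bullet(N_{\op{SL}_2(\mathcal{O}_H[1/23])}(\Gamma),\mathbb{F}_{23})\longrightarrow
\widehat{\op{H}}^\bullet(N_{\op{SL}_2(\mathcal{O}_K[1/23])}(\Gamma_1),\mathbb{F}_{23})\oplus \widehat{\op{H}}^\bullet(N_{\op{SL}_2(\mathcal{O}_K[1/23])}(\Gamma_2),\mathbb{F}_{23}),
$$
so its image is contained in the ``diagonal'' subring and in particular is not surjective. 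Because each summand is Laurent-periodic in the class $a_2$, I can pick a class $\alpha$ in the anti-diagonal (say $(\beta,0)$ for any nonzero $\beta$) and, by multiplying by a sufficiently high even power $a_2^{2N}$, arrange that $\alpha\cdot a_2^{2N}$ lives in a degree strictly above the virtual cohomological dimension of $\op{SL}_2(\mathcal{O}_K[1/23])$, while still being outside the image of the restriction.

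Now suppose for contradiction that a transfer map
$$
\operatorname{tr}:\op{H}^\bullet(\op{SL}_2(\mathcal{O}_K[1/23]),\mathbb{F}_{23})\longrightarrow \op{H}^\bullet(\op{SL}_2(\mathcal{O}_H[1/23]),\mathbb{F}_{23})
$$
exists with $\phi^\bullet\circ\operatorname{tr}=[L:K]\cdot\operatorname{id}=3\cdot\operatorname{id}$. Since the class $\alpha\cdot a_2^{2N}$ lies above the virtual cohomological dimension, the canonical map from group cohomology to Farrell--Tate cohomology is an isomorphism in that degree, both for $\mathcal{O}_K[1/23]$ and (in all higher degrees) for $\mathcal{O}_H[1/23]$. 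Naturality of this comparison map with respect to restriction identifies $\phi^\bullet$ in group cohomology with $\phi^\bullet$ in Farrell--Tate cohomology in the relevant range. The transfer identity then forces $\alpha\cdot a_2^{2N}=3^{-1}\cdot\phi^\bullet(\operatorname{tr}(\alpha\cdot a_2^{2N}))$ to lie in the image of $\phi^\bullet$, contradicting the choice of $\alpha$.

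The main point to be careful about is the degree bookkeeping: I need the class $\alpha\cdot a_2^{2N}$, its image $\operatorname{tr}(\alpha\cdot a_2^{2N})$ in $\op{H}^\bullet(\op{SL}_2(\mathcal{O}_H[1/23]),\mathbb{F}_{23})$, and the recomposition $\phi^\bullet\circ\operatorname{tr}(\alpha\cdot a_2^{2N})$ all to live in degrees above the respective virtual cohomological dimensions so that the Farrell--Tate identifications apply consistently. Since the vcd's are finite and the obstructing classes are Laurent-periodic, choosing $N$ large enough takes care of this uniformly, and the argument goes through with no further obstacle.
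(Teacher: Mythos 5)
Your proposal is correct and follows essentially the same route as the paper: the paper's proof of Corollary~\ref{cor:ntr} is precisely the observation that Farrell--Tate and group cohomology agree above the finite vcd, so the argument of Theorem~\ref{thm:notr} (non-surjectivity of the restriction onto the diagonal, versus bijectivity of multiplication by the degree $[L:K]$ prime to $\ell$) applies verbatim to group cohomology in high degrees. Your additional care with the $a_2$-periodicity to push the witness class above both vcd's, and the naturality of the comparison map $\op{H}^\bullet\to\widehat{\op{H}}^\bullet$, just makes explicit what the paper leaves implicit.
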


\begin{proof}
Farrell--Tate and group cohomology agree above the virtual cohomological dimension, which for $\op{SL}_2$ over $S$-integers is finite. In the degrees above the vcd, the argument of Theorem~\ref{thm:notr} also applies to group cohomology.
\end{proof}
This concludes the proof of Theorem~\ref{thm:main4}.

\section{Application III: cohomology of \texorpdfstring{$\op{SL}_2$}{SL2} over number fields} 
\label{sec:borel}

Using the formulas for Farrell--Tate cohomology from Theorem~\ref{thm:main1} and restriction maps from Section~\ref{sec:transfers}, we can now compute the colimit of the Farrell--Tate cohomology groups over all possible finite sets of places. This could be interpreted as ``Farrell--Tate cohomology of $\op{SL}_2$ over global  fields''. The quotes are necessary as Farrell--Tate  cohomology is only defined for groups of finite virtual cohomological dimension. The goal of this section is to prove Theorem~\ref{thm:main5}.

\subsection{Recollection on Mislin--Tate cohomology} 

In  \cite{mislin:tate}, Mislin has defined an extension of Farrell--Tate cohomology to arbitrary groups.  The basic idea behind the definitions in \cite{mislin:tate} is to use satellites of group cohomology to kill projectives in the derived category of $\mathbb{Z}[G]$-modules, and obtain a completed cohomology, cf. \cite{mislin:tate}*{Section 2}. For groups of finite virtual cohomological dimension, Mislin's version of Tate cohomology agrees with Farrell--Tate cohomology, cf. \cite{mislin:tate}*{Lemma 3.1}. Moreover, Mislin shows that for $K$ a number field and $G=\op{GL}_n(K)$, his version of Tate cohomology can be identified with group homology, cf. \cite{mislin:tate}*{Theorem 3.2}. The same argument also applies to $\op{SL}_2$. 

\subsection{Colimit computations}

The following result is an immediate consequence of our earlier computations, in particular Theorem~\ref{thm:main1} and Proposition~\ref{8.1}. 

\begin{theorem}
\label{thm:limit}
Let $K$ be a number field, and let $\ell$ be an odd prime. We have the following three cases: 
\begin{enumerate}
\item If $\zeta_\ell+\zeta_\ell^{-1}\not\in K$, then 
$$
\lim_{S\supseteq S_\infty,
  {\rm finite}}\widehat{\op{H}}^\bullet(\op{SL}_2(\mathcal{O}_{K,S}),
\mathbb{F}_\ell)=0. 
$$
\item If $\zeta_\ell+\zeta_\ell^{-1}\in K$ and $\zeta_\ell\not\in K$, then 
$$
\lim_{S\supseteq S_\infty,
  {\rm finite}}\widehat{\op{H}}^\bullet(\op{SL}_2(\mathcal{O}_{K,S}),
\mathbb{F}_\ell)\cong \prod_{\Gamma\in \mathcal{K}_\ell}
\op{H}^\bullet(N_{\op{SL}_2(K)}(\Gamma),\mathbb{F}_\ell)[(a_2^2)^{-1}].
$$
In the above, $\mathcal{K}_\ell=\op{coker}\left(\op{Nm}_1:K(\zeta_\ell)^\times\to K^\times\right)/\op{Gal}(K(\zeta_\ell)/K)$ denotes the set of conjugacy classes of finite cyclic subgroups where the Galois action is the one described in Proposition~\ref{prop:galois}. The possible orientations of $K(\zeta_\ell)/K$ are given by the relative Brauer group $\op{coker}\left(\op{Nm}_1:K(\zeta_\ell)^\times\to K^\times\right)\cong\op{Br}(K(\zeta_\ell)/K)\cong H^2(\mathbb{Z}/2\mathbb{Z},K(\zeta_\ell)^\times)$. For  the normalizer, we have $N_{\op{SL}_2(K)}(\Gamma)\cong \op{T}(K)$ (the maximal torus of $\op{SL}_2(K)$) or $N_{\op{SL}_2(K)}(\Gamma)\cong \op{N}(K)$ (the normalizer of the maximal torus in $\op{SL}_2(K)$) depending on whether the class of $\Gamma$ is $\mathbb{Z}/2$-invariant or not. The $[(a_2^2)^{-1}]$ indicates that the polynomial class $a_2\in \op{H}^4(N_{\op{SL}_2(K)}(\Gamma),\mathbb{F}_\ell)$ is $\cup$-inverted.
\item If $\zeta_\ell\in K$, then 
$$
\lim_{S\supseteq S_\infty,
  {\rm finite}}\widehat{\op{H}}^\bullet(\op{SL}_2(\mathcal{O}_{K,S}),
\mathbb{F}_\ell)\cong\op{H}^\bullet(\op{N}(K),\mathbb{F}_\ell)[(a_2^2)^{-1}].
$$
\end{enumerate}
\end{theorem}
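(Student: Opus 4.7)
The plan is to apply the decomposition from Theorem~\ref{thm:main1} at each finite stage, track restriction maps with Proposition~\ref{8.1}, and then pass to the colimit term by term using exactness of filtered colimits of abelian groups.

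Case (1) is immediate: when $\zeta_\ell+\zeta_\ell^{-1}\notin K$, no element of $\op{SL}_2(\mathcal{O}_{K,S})$ can have characteristic polynomial $\Psi_\ell(T)=T^2-(\zeta_\ell+\zeta_\ell^{-1})T+1$, because its coefficient would have to lie in $K$. Brown's formula then gives $\widehat{\op{H}}^\bullet(\op{SL}_2(\mathcal{O}_{K,S}),\mathbb{F}_\ell)=0$ at every finite stage, and the colimit vanishes.

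For Cases (2) and (3), I would first identify the colimit of the indexing sets $\mathcal{K}_\ell$. The key number-theoretic inputs are $\operatorname{colim}_S \mathcal{O}_{K,S}=K$ and $\operatorname{colim}_S R_{K,S,\ell}=K(\Psi_\ell)$, together with $\operatorname{colim}_S\op{Pic}(R_{K,S,\ell})=0$ (every ideal class is killed after inverting a prime in its support) and $\operatorname{colim}_S R_{K,S,\ell}^\times=K(\Psi_\ell)^\times$. Applying these to the extension of Theorem~\ref{thm:conjsl2}(i) and using that filtered colimits preserve exactness, one gets
$$
\operatorname{colim}_S \mathcal{C}_\ell \;\cong\; \op{coker}\bigl(\op{Nm}_1\colon K(\Psi_\ell)^\times\to K^\times\bigr),
$$
since the $\op{Pic}$ term disappears. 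For the quadratic cyclic extension $K(\Psi_\ell)/K$ in Case (2), the usual Tate-cohomology periodicity of cyclic groups together with Hilbert 90 identifies this cokernel with $H^2(\mathbb{Z}/2\mathbb{Z},K(\Psi_\ell)^\times)\cong\op{Br}(K(\Psi_\ell)/K)$; taking Galois-orbits as dictated by Proposition~\ref{prop:galois} then yields the claimed indexing set. In Case (3), $K(\Psi_\ell)\cong K\times K$, the norm map is surjective, the cokernel is trivial, and there is only one component in the limit.

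Next, for each component I would identify the colimit of the normalizer's Farrell--Tate cohomology with $\op{H}^\bullet$ of the limit-group normalizer, inverted at $a_2^2$. By Parts (iii) and (iv) of Theorem~\ref{thm:main1}, at each finite $S$ the normalizer cohomology is
$$
\bigl(\mathbb{F}_\ell[a_2,a_2^{-1}]\otimes_{\mathbb{F}_\ell}\bigwedge(\ker\op{Nm}_1)\bigr)^{G},
$$
with $G$ trivial or $\mathbb{Z}/2\mathbb{Z}$. Taking colimits, $\ker\op{Nm}_1$ assembles into the Galois-kernel of norm on $K(\Psi_\ell)^\times$, which is $\op{T}(K)$ for the maximal torus of $\op{SL}_2(K)$ split by $K(\Psi_\ell)$; adjoining the $\mathbb{Z}/2\mathbb{Z}$ factor when the class is Galois-invariant produces $\op{N}(K)$. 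The identification of the colimit of $\widehat{\op{H}}^\bullet$ with $\op{H}^\bullet$ of the colimit group inverted at $a_2^2$ uses that each finite-stage normalizer contains a cyclic subgroup of order $\ell$, hence has periodic Farrell--Tate cohomology with period realized by $a_2^2$, and that all structure maps preserve periodicity and commute with $\otimes \mathbb{F}_\ell[a_2^2,a_2^{-2}]$; this passes to the filtered colimit. In Case (3) the single component already gives the stated formula.

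The main obstacle will be the clean identification of the limit cokernel with the relative Brauer group and the verification that the Galois action of Proposition~\ref{prop:galois} on orientations corresponds under this identification to the natural $\op{Gal}(K(\Psi_\ell)/K)$-action on $K^\times/\op{Nm}K(\Psi_\ell)^\times$. This amounts to bookkeeping with Hilbert 90 and with the patching description of ideals in $R_{K,S,\ell}$. Once this step is in hand, exactness of filtered colimits and the vanishing of $\operatorname{colim}_S\op{Pic}$ assemble all the local ingredients into the three formulas.
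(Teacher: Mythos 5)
Your proposal follows essentially the same route as the paper, whose proof of Theorem~\ref{thm:limit} consists precisely of combining Theorem~\ref{thm:main1} with Proposition~\ref{8.1} and the standard limiting facts $\op{colim}_S\mathcal{O}_{K,S}^\times=K^\times$, $\op{colim}_S R_{K,S,\ell}^\times=K(\Psi_\ell)^\times$ and $\op{colim}_S\op{Pic}(R_{K,S,\ell})=0$; your identification of the limiting cokernel with $\op{Br}(K(\Psi_\ell)/K)$ via periodicity of Tate cohomology of the cyclic group $\mathbb{Z}/2\mathbb{Z}$ is exactly the intended one. The only caution is terminological but worth fixing: the conjugacy classes, unit groups and class groups do form a filtered system (so your colimit arguments there are fine), but the cohomology groups themselves form an \emph{inverse} system under restriction, so the passage to ``$\lim_S$'' of the cohomology should be argued degreewise from the explicit finite product decompositions (using field coefficients and duality with homology) rather than by invoking exactness of filtered colimits.
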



\begin{corollary}
There are cases where Mislin's version of Farrell--Tate cohomology does not commute with filtered colimits. 
\end{corollary}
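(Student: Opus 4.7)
The plan is to exhibit the explicit example flagged in Theorem~\ref{thm:main5}(1). Consider the directed system of rings $\mathbb{Z}[1/n]$ for $n\in\mathbb{N}$, ordered by divisibility, whose filtered colimit is $\mathbb{Q}$, and the induced directed system of groups $\op{SL}_2(\mathbb{Z}[1/n])$ with colimit $\op{SL}_2(\mathbb{Q})$. Fix any odd prime $\ell\geq 5$; then the minimal polynomial of $\zeta_\ell+\zeta_\ell^{-1}$ over $\mathbb{Q}$ has degree $(\ell-1)/2\geq 2$, so $\zeta_\ell+\zeta_\ell^{-1}\notin\mathbb{Q}$.

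First I would show that the filtered colimit of the individual Mislin cohomologies vanishes. By the conjugacy classification of Theorem~\ref{thm:conjsl2}, any element of exact order $\ell$ in $\op{SL}_2(\mathbb{Z}[1/n])$ would have to have trace $\zeta_\ell+\zeta_\ell^{-1}$, which is impossible by the choice of $\ell$. Consequently, $\op{SL}_2(\mathbb{Z}[1/n])$ has no $\ell$-torsion, Brown's formula gives $\widehat{\op{H}}^\bullet(\op{SL}_2(\mathbb{Z}[1/n]),\mathbb{F}_\ell)=0$, and because $\op{SL}_2(\mathbb{Z}[1/n])$ has finite virtual cohomological dimension, Mislin's extension coincides with Farrell--Tate cohomology on every stage of the system. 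The colimit is therefore zero; this is also the content of Theorem~\ref{thm:limit}(1) specialised to $K=\mathbb{Q}$.

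Next I would invoke the version of Mislin's theorem recalled in the excerpt (\cite{mislin:tate}*{Theorem~3.2}, extended to $\op{SL}_2$ by the same argument used for $\op{GL}_n$), which identifies Mislin's extension of Farrell--Tate cohomology of $\op{SL}_2(\mathbb{Q})$ with ordinary group (co)homology. In particular the degree-zero part is $\mathbb{F}_\ell\neq 0$, so Mislin's cohomology of the colimit group $\op{SL}_2(\mathbb{Q})$ is non-trivial. Comparing the two sides concludes the proof: Mislin's extension evaluated on $\op{SL}_2(\mathbb{Q})=\op{colim}_n\op{SL}_2(\mathbb{Z}[1/n])$ is non-zero while the colimit of the Mislin cohomologies along the directed system vanishes.

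The conceptual asymmetry driving the failure is transparent and the proof involves no calculation beyond Theorem~\ref{thm:limit}(1): on each finite-vcd stage, Mislin's cohomology is controlled by the (non-existent) finite $\ell$-subgroups, whereas on the infinite-vcd colimit Mislin's theorem collapses the completion back to plain group cohomology, which is non-trivial for tautological reasons. The only step requiring care is verifying that the identification of Mislin--Tate with group cohomology carries over from $\op{GL}_n(\mathbb{Q})$ to $\op{SL}_2(\mathbb{Q})$ --- this should be the main obstacle, but it is already asserted in the excerpt and amounts to the same finite-generation / resolution argument Mislin uses.
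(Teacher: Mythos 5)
Your argument is correct, but it is genuinely different from the example the paper actually gives, and the discrepancy it exhibits points in the opposite direction. The paper's proof takes $K=\mathbb{Q}$ and $\ell=3$, the one odd prime for which $\zeta_\ell+\zeta_\ell^{-1}\in\mathbb{Q}$, so that the stages $\op{SL}_2(\mathbb{Z}[1/n])$ do contain $3$-torsion; Theorem~\ref{thm:limit}(2) then identifies the colimit of the Farrell--Tate groups with $\op{H}^\bullet(\mathbb{Q}^\times,\mathbb{F}_3)[a_2^{-1}]$, whose degree-one part $\op{Hom}_{\mathbb{F}_3}(\mathbb{Q}^\times/(\mathbb{Q}^\times)^3,\mathbb{F}_3)$ is infinite-dimensional, while $\widehat{\op{H}}^1(\op{SL}_2(\mathbb{Q}),\mathbb{F}_3)\cong\op{H}^1(\op{SL}_2(\mathbb{Q}),\mathbb{F}_3)=0$ since $\op{SL}_2(\mathbb{Q})$ is perfect: the colimit is too big. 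In your version ($\ell\geq 5$, hence no $\ell$-torsion at any stage) the colimit is too small, namely zero, against the nonvanishing of $\widehat{\op{H}}^0(\op{SL}_2(\mathbb{Q}),\mathbb{F}_\ell)\cong\op{H}^0(\op{SL}_2(\mathbb{Q}),\mathbb{F}_\ell)=\mathbb{F}_\ell$. Your route is more elementary --- it needs only the trivial case (1) of Theorem~\ref{thm:limit} together with Mislin's identification --- whereas the paper's example actually exercises the computations of this section and detects the failure in positive degree. Both arguments lean on the same external input, the extension of \cite{mislin:tate}*{Theorem 3.2} from $\op{GL}_n$ to $\op{SL}_2(\mathbb{Q})$; the one point where yours is slightly more exposed is that you invoke this identification in degree $0$, where Farrell--Tate and ordinary cohomology of the finite-vcd stages already disagree for tautological reasons. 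If one only had the isomorphism in positive degrees, you would instead need to produce a nonzero positive-degree class in $\op{H}^\bullet(\op{SL}_2(\mathbb{Q}),\mathbb{F}_\ell)$, which is less immediate; as the identification is stated degree-independently, your proof goes through as written.
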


\begin{proof}
For $K=\mathbb{Q}$, we have $\zeta_3+\zeta_3^{-1}\in\mathbb{Q}$ and $((\zeta_6-\zeta_6^{-1})^2,-1)=(-3,-1)=-1$. In terms of finite subgroups of $\op{PSL}_2(\mathbb{Q})$, there exist finite subgroups of order $3$, but they are not contained in dihedral groups. By Theorem~\ref{thm:limit} above, we have 
$$
\lim_{S\supseteq S_\infty,
  \textrm{finite}}
\widehat{\op{H}}^\bullet(\op{SL}_2(\mathcal{O}_{\mathbb{Q},S}),\mathbb{F}_3)= 
\op{H}^\bullet(\mathbb{Q}^\times,\mathbb{F}_3)[a_2^{-1}].
$$
In particular, the limit of Farrell--Tate cohomology groups in odd
degrees contains  
$$
\op{H}^1(\mathbb{Q}^\times,\mathbb{F}_3)\cong
\op{Hom}_{\mathbb{F}_3}(\mathbb{Q}^\times/(\mathbb{Q}^\times)^3,\mathbb{F}_3), 
$$
which is an infinite-dimensional $\mathbb{F}_3$-vector space. However, by
\cite{mislin:tate}*{Theorem 3.2} there is an isomorphism 
$$
\widehat{\op{H}}^1(\op{SL}_2(\mathbb{Q}),\mathbb{F}_3)\cong
\op{H}^1(\op{SL}_2(\mathbb{Q}),\mathbb{F}_3), 
$$
and the latter is trivial. Therefore, Mislin's version of Farrell--Tate
cohomology does not commute with directed colimits. 
\end{proof}

For an extension of number fields $L/K$, we can precisely describe the induced morphism on the colimit of the Farrell--Tate homology - it is induced by the inclusion $K^\times\hookrightarrow L^\times$. Again taking a colimit, we arrive at the following ``Farrell--Tate cohomology of $\op{SL}_2(\overline{\mathbb{Q}})$'': 

\begin{corollary}
\label{cor:sl2qbar}
Let $K$ be a number field and $\ell$ be an odd prime. We fix an algebraic closure $\overline{\mathbb{Q}}$ of $\mathbb{Q}$. Then we have 
$$
\lim_{\overline{\mathbb{Q}}\supseteq L\supseteq \mathbb{Q},
    S\supseteq  S_\infty}
\widehat{\op{H}}^\bullet(\op{SL}_2(\mathcal{O}_{L,S}),\mathbb{F}_\ell)
    \cong
\op{H}^\bullet(\op{N}(\overline{\mathbb{Q}}),\mathbb{F}_\ell)[(a_2^2)^{-1}],
$$
where $S$ runs through the finite sets of places of $L$ containing the
infinite places, and $\op{N}(\overline{\mathbb{Q}})$ denotes the
$\overline{\mathbb{Q}}$-points of the normalizer of a maximal torus of
$\op{SL}_2$. 
\end{corollary}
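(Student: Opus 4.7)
The plan is to obtain the result by iterating Theorem~\ref{thm:limit} and interchanging the two directed colimits. First, for each fixed intermediate field $\overline{\mathbb{Q}}\supseteq L\supseteq \mathbb{Q}$, Theorem~\ref{thm:limit} already computes the inner colimit $\lim_{S\supseteq S_\infty}\widehat{\op{H}}^\bullet(\op{SL}_2(\mathcal{O}_{L,S}),\mathbb{F}_\ell)$, with the answer depending on whether $\zeta_\ell\in L$, $\zeta_\ell+\zeta_\ell^{-1}\in L$ only, or neither. Since the outer colimit runs through a directed system and every element of $\overline{\mathbb{Q}}$ lies in some finite extension of $\mathbb{Q}$, we may restrict the outer system to $L$ that already contain $\zeta_\ell$; these form a cofinal subsystem. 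For such $L$ we are in case (3) of Theorem~\ref{thm:limit} and obtain
$$
\lim_{S\supseteq S_\infty}\widehat{\op{H}}^\bullet(\op{SL}_2(\mathcal{O}_{L,S}),\mathbb{F}_\ell)\cong \op{H}^\bullet(\op{N}(L),\mathbb{F}_\ell)[(a_2^2)^{-1}].
$$

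Next, I would identify the transition maps in the outer system. For $L\subseteq L'$ with $\zeta_\ell\in L$, Proposition~\ref{8.1} describes how the Farrell--Tate cohomology restricts along an extension: as $L'$ grows, the three phenomena (C\,1)--(C\,3) occur, and in the colimit over $S$ they assemble into the natural map induced by the inclusion of normalizers $\op{N}(L)\hookrightarrow \op{N}(L')$ of the maximal tori of $\op{SL}_2$. One needs to check here that (a) the inverted class $a_2$ in each component is preserved under restriction, which follows from its interpretation as the second Chern class of the standard two-dimensional representation (compatibly defined for all $L$), and (b) all finite cyclic subgroups of $\op{SL}_2(\overline{\mathbb{Q}})$ of order $\ell$ become conjugate and acquire a dihedral overgroup over $\overline{\mathbb{Q}}$, so that in the colimit only a single component survives, namely the one indexed by the unique conjugacy class in $\op{SL}_2(\overline{\mathbb{Q}})$.

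I would then pass to the colimit using that group cohomology commutes with filtered colimits of discrete groups, together with the observation $\op{colim}_L \op{N}(L)\cong \op{N}(\overline{\mathbb{Q}})$: indeed, any element of $\op{N}(\overline{\mathbb{Q}})$ has all its entries in some finite $L$. Inverting the class $a_2^2$ commutes with the filtered colimit since localization is exact and $a_2^2$ is represented compatibly in each stage by the second Chern class of the standard representation of the finite cyclic subgroup generated by $\zeta_\ell$. Combining these steps yields the asserted isomorphism.

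The main obstacle I expect is the bookkeeping needed to interchange the two colimits, in particular verifying that the restriction maps produced by Proposition~\ref{8.1} in situations (C\,1) and (C\,2) really assemble, over the directed system of $(L,S)$, into the map induced by the inclusion $\op{N}(L)\hookrightarrow \op{N}(L')$ of normalizers of tori, and that the ``new'' components appearing via (C\,3) contribute nothing to the colimit because any newly appearing conjugacy class of cyclic subgroups in $\op{SL}_2(\mathcal{O}_{L',\tilde S})$ eventually becomes conjugate to the distinguished one after enlarging $L'$ further (using that over $\overline{\mathbb{Q}}$ the classification of finite cyclic subgroups of $\op{SL}_2$ is trivial). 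Once this cofinality/coherence argument is in hand, the rest of the proof is a direct consequence of Theorem~\ref{thm:limit}, Proposition~\ref{8.1}, and the exactness of filtered colimits.
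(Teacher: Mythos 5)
Your proposal is correct and follows essentially the same route as the paper: the paper derives the corollary by observing that for an extension $L/K$ the induced map on the $S$-colimits of Theorem~\ref{thm:limit} is the one coming from $K^\times\hookrightarrow L^\times$ (equivalently $\op{N}(K)\hookrightarrow\op{N}(L)$) and then taking the colimit over $L$, which is exactly your iterated-colimit argument with the cofinal subsystem of fields containing $\zeta_\ell$. The additional checks you flag (cofinality, commutation of group cohomology and of the localization at $a_2^2$ with filtered colimits, identification of the transition maps via Proposition~\ref{8.1}) are precisely the details the paper leaves implicit, and they go through as you describe.
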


Finally, we want to explain how these colimits over Farrell--Tate cohomology groups allow to provide a reformulation of the Friedlander--Milnor conjecture. For a discussion of the Friedlander--Milnor conjecture, we refer to \cite{knudson:book}*{Chapter 5}. Recall that Milnor's form of what is now called the Friedlander--Milnor conjecture predicts that for a complex Lie group $G$, the natural change-of-topology map $\op{H}^\bullet_{\op{cts}}(G,\mathbb{F}_\ell)\to \op{H}^\bullet(G^\delta,\mathbb{F}_\ell)$ is an isomorphism, where the source is continuous cohomology of the Lie group $G$ with the analytic topology, and $G^\delta$ is the group with the discrete topology. On the other hand, Friedlander's generalized isomorphism conjecture predicts that for each algebraically closed field $K$ of characteristic different from $\ell$ and each linear algebraic group $G$ over $K$, another natural  change-of-topology map $\op{H}^\bullet_{\text{\'et}}(BG_K,\mathbb{F}_\ell)\to \op{H}^\bullet(BG(K),\mathbb{F}_\ell)$ is an isomorphism. From the rigidity property of \'etale cohomology, it follows that the Friedlander--Milnor conjecture for $\op{SL}_2$ over $\overline{\mathbb{Q}}$ is equivalent to the natural restriction map  
$$
\op{H}^\bullet_{\op{cts}}(\op{SL}_2(\mathbb{C}),\mathbb{F}_\ell)\to
\op{H}^\bullet(\op{SL}_2(\overline{\mathbb{Q}}),\mathbb{F}_\ell)
$$
being an isomorphism. Note that the continuous cohomology
$$
\op{H}^\bullet_{\op{cts}}(\op{SL}_2(\mathbb{C}),\mathbb{F}_\ell)\cong
\mathbb{F}_\ell[c_2]
$$
is a polynomial ring generated by the second Chern class. Therefore, the  Friedlander--Milnor conjecture for $\op{SL}_2$ over $\overline{\mathbb{Q}}$ can be reformulated as the claim that   $$\op{H}^\bullet(\op{SL}_2(\overline{\mathbb{Q}}),\mathbb{F}_\ell)
\cong
\mathbb{F}_\ell[c_2].
$$
Note that the right-hand side can also be identified with the ``Farrell--Tate cohomology of $\op{SL}_2(\overline{\mathbb{Q}})$'', by Corollary~\ref{cor:sl2qbar} above. This allows to reformulate the Friedlander--Milnor conjecture for $\op{SL}_2$ over $\overline{\mathbb{Q}}$ as the requirement that group cohomology and ``Farrell--Tate cohomology'' of $\op{SL}_2(\overline{\mathbb{Q}})$ are isomorphic: 

\begin{corollary}
The Friedlander--Milnor conjecture for $\op{SL}_2$ over  $\overline{\mathbb{Q}}$ with $\mathbb{F}_\ell$-coefficients, $\ell$ an odd prime, is equivalent to either one of the two following statements:  
\begin{enumerate}
\item
The colimit of homology groups of Steinberg modules vanishes: 
$$
\lim_{\overline{\mathbb{Q}}\supseteq L\supseteq \mathbb{Q},
    S\supseteq  S_\infty}
\op{H}_\bullet(\op{SL}_2(\mathcal{O}_{L,S}), 
\operatorname{St}_2(\mathcal{O}_{L,S});\mathbb{F}_\ell) = 0.
$$
where as above $S$ runs through the finite sets of places of $L$ containing the infinite places.
\item Mislin's version of Farrell--Tate cohomology commutes with the   filtered co\-limit over the intermediate fields $\overline{\mathbb{Q}}/L/\mathbb{Q}$ and their finite sets $S$ of places.
\end{enumerate}
\end{corollary}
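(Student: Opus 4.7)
The plan is as follows. By the paragraph immediately preceding the statement, the Friedlander--Milnor conjecture for $\operatorname{SL}_2(\overline{\mathbb{Q}})$ with $\mathbb{F}_\ell$-coefficients is equivalent to the assertion that the natural change-of-topology map
$$
\operatorname{H}^\bullet_{\operatorname{cts}}(\operatorname{SL}_2(\mathbb{C}),\mathbb{F}_\ell) \cong \mathbb{F}_\ell[c_2] \longrightarrow \operatorname{H}^\bullet(\operatorname{SL}_2(\overline{\mathbb{Q}}),\mathbb{F}_\ell)
$$
is an isomorphism. By Corollary~\ref{cor:sl2qbar}, together with the identification $\operatorname{H}^\bullet(\op{N}(\overline{\mathbb{Q}}),\mathbb{F}_\ell)[(a_2^2)^{-1}] \cong \mathbb{F}_\ell[c_2]$ given just above, the source of this map is canonically identified with $\lim_{L,S}\widehat{\operatorname{H}}^\bullet(\operatorname{SL}_2(\mathcal{O}_{L,S}),\mathbb{F}_\ell)$. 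Thus the whole strategy is to reformulate the condition \emph{``the natural comparison map from $\operatorname{H}^\bullet(\operatorname{SL}_2(\overline{\mathbb{Q}}),\mathbb{F}_\ell)$ to this colimit is an isomorphism''} in the two claimed ways.

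For the equivalence with (2), I would invoke Mislin's agreement theorem~\cite{mislin:tate}*{Theorem~3.2} specialised to $\operatorname{SL}_2$ of a number field, which identifies
$$
\widehat{\operatorname{H}}^\bullet(\operatorname{SL}_2(\overline{\mathbb{Q}}),\mathbb{F}_\ell) \cong \operatorname{H}^\bullet(\operatorname{SL}_2(\overline{\mathbb{Q}}),\mathbb{F}_\ell).
$$
Under this identification, the question of whether Mislin's extension commutes with the directed colimit over the $(L,S)$ becomes literally the question of whether the comparison map of the first paragraph is an isomorphism, hence is the Friedlander--Milnor conjecture.

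For the equivalence with (1), the plan is to exploit the long exact sequence recalled in Section~\ref{sec:quillen},
$$
\cdots \to \widehat{\operatorname{H}}^{i-1}(\Gamma,\mathbb{F}_\ell) \to \operatorname{H}_{n_\Gamma - i}(\Gamma,\operatorname{St}^\Gamma\otimes\mathbb{F}_\ell) \to \operatorname{H}^i(\Gamma,\mathbb{F}_\ell) \to \widehat{\operatorname{H}}^i(\Gamma,\mathbb{F}_\ell) \to \cdots
$$
for each $\Gamma = \operatorname{SL}_2(\mathcal{O}_{L,S})$, $n_\Gamma$ its virtual cohomological dimension. This sequence is natural for the inclusions $\mathcal{O}_{L,S}\hookrightarrow \mathcal{O}_{L',S'}$. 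Filtered colimits of abelian groups being exact, and group cohomology commuting with filtered colimits of groups, passing to the colimit over $(L,S)$ gives a long exact sequence in which the colimit of group cohomologies is $\operatorname{H}^\bullet(\operatorname{SL}_2(\overline{\mathbb{Q}}),\mathbb{F}_\ell)$ and the colimit of Farrell--Tate cohomologies is $\mathbb{F}_\ell[c_2]$ by Corollary~\ref{cor:sl2qbar}. The comparison map being an isomorphism in all degrees is then visibly equivalent to the vanishing of all colimits of Steinberg homology groups, i.e.\ to condition~(1).

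The step I expect to be most delicate is the genuine naturality of the Steinberg-homology long exact sequence with respect to the transition maps of the directed system: one must check that the Steinberg modules $\operatorname{St}^\Gamma$ are compatibly identified under the restriction from $\operatorname{SL}_2(\mathcal{O}_{L',S'})$ to $\operatorname{SL}_2(\mathcal{O}_{L,S})$, and that the shift in virtual cohomological dimension is absorbed consistently so that the colimit sequence really reads as claimed. Since $n_\Gamma\to\infty$ along the system, for any fixed $\bullet$ the term $\operatorname{H}_\bullet(\Gamma,\operatorname{St}^\Gamma)$ eventually appears in arbitrarily high cohomological degree of the LES, which is precisely what makes the reformulation come out cleanly once naturality is established.
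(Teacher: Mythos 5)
Your proposal follows essentially the same route as the paper's own proof: identify the colimit of Farrell--Tate cohomology with $\mathbb{F}_\ell[c_2]$ (the paper substantiates this by observing that $\overline{\mathbb{Q}}^\times/\mu_{\ell^\infty}$ is uniquely $\ell$-divisible, so $\op{H}^\bullet(\op{N}(\overline{\mathbb{Q}}),\mathbb{F}_\ell)$ reduces to that of $\mu_{\ell^\infty}\rtimes\mathbb{Z}/2$), deduce (2) from Mislin's Theorem~3.2, and deduce (1) by passing to the filtered colimit of the Farrell--Tate/Steinberg long exact sequence and using exactness of filtered colimits of $\mathbb{F}_\ell$-vector spaces. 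The naturality and degree-shift issues you flag as delicate are not addressed in the paper's proof either, so your argument matches it in both structure and level of detail.
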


\begin{proof}
We denote by $\mu_{\ell^\infty}$ the group of $\ell$-power roots of unity in $\overline{\mathbb{Q}}$. The group $\overline{\mathbb{Q}}^\times/\mu_{\ell^\infty}$ is uniquely $\ell$-divisible. In particular, the inclusion $\mu_{\ell^\infty}\subseteq \overline{\mathbb{Q}}^\times$ induces an isomorphism 
$$
\op{H}^q(\op{N}(\overline{\mathbb{Q}}),\mathbb{F}_\ell)\cong
\op{H}^q(\mu_{\ell^\infty}\rtimes\mathbb{Z}/2,\mathbb{F}_\ell)\cong
\left\{\begin{array}{ll}
\mathbb{F}_\ell ,& q\equiv 0\mod 4,\\
0, & \textrm{otherwise},\\
\end{array}\right.
$$
which is what is predicted by the Friedlander--Milnor conjecture. 

Let $G = \op{SL}_2(\mathcal{O}_{L,S})$. To prove Assertion (1), we take the filtered colimit of the long exact sequence 
$$
\cdots\rightarrow \widehat{\op{H}}^{\bullet-1}(G)\rightarrow
\op{H}_{n-\bullet}(G,\operatorname{St}^{G})\rightarrow
\op{H}^\bullet(G)\rightarrow 
\widehat{\op{H}}^\bullet(G)\rightarrow\cdots
$$
Group cohomology commutes with the filtered colimit, and by the above, the Friedlander--Milnor conjecture is equivalent to the fact that the colimit of the morphisms $\op{H}^\bullet(\op{SL}_2(\mathcal{O}_{K,S})\rightarrow \widehat{\op{H}}^\bullet(G)$ is an isomorphism. Since filtered colimits are exact for $\mathbb{F}_\ell$-vector spaces, the claim follows. 

Assertion (2) follows similarly. By \cite{mislin:tate}*{Theorem 3.2}, Mislin--Tate cohomology of $\op{SL}_2(\overline{K})$ agrees with group  cohomology, so the Friedlander--Milnor conjecture is equivalent to the commutation of Farrell--Tate cohomology with the filtered colimit.  
\end{proof}

\begin{remark}
Note that our computations above imply that ``Farrell--Tate cohomology of $\op{SL}_2(\overline{\mathbb{Q}})$'' is in fact detected on the normalizer of the maximal torus. This relates our result above to another reformulation of the Friedlander--Milnor conjecture: \cite{knudson:book}*{corollary 5.2.10} shows that the Friedlander--Milnor conjecture is equivalent to group cohomology being detected on the normalizer, cf. also the related discussion in \cite{sl2parabolic}. 
\end{remark}

\begin{bibdiv}
\begin{biblist}
\bib{adem:milgram}{book}{
   author={Adem, Alejandro},
   author={Milgram, R. James},
   title={Cohomology of finite groups},
   series={Grundlehren der Mathematischen Wissenschaften [Fundamental
   Principles of Mathematical Sciences]},
   volume={309},
   edition={2},
   publisher={Springer-Verlag, Berlin},
   date={2004},
   pages={viii+324},
   isbn={3-540-20283-8},
   review={\MR{2035696 (2004k:20109)}},
   doi={10.1007/978-3-662-06280-7},
}
\bib{anton}{article}{
   author={Anton, Marian F.},
   title={On a conjecture of Quillen at the prime $3$},
   journal={J. Pure Appl. Algebra},
   volume={144},
   date={1999},
   number={1},
   pages={1--20},
   issn={0022-4049},
   review={\MR{1723188 (2000m:19003)}},
   doi={10.1016/S0022-4049(98)00050-4},
}
\bib{anton:roberts}{article}{
   author={Anton, Marian},
   author={Roberts, Joshua},
   title={Unstable analogues of the Lichtenbaum-Quillen conjecture},
   conference={
      title={Advances in mathematics},
   },
   book={
      publisher={Ed. Acad. Rom\^ane, Bucharest},
   },
   date={2013},
   pages={49--56},
   review={\MR{3203415}},
}
\bib{beauville}{article}{
   author={Beauville, Arnaud},
   title={Finite subgroups of ${\rm PGL}_2(K)$},
   conference={
      title={Vector bundles and complex geometry},
   },
   book={
      series={Contemp. Math.},
      volume={522},
      publisher={Amer. Math. Soc., Providence, RI},
   },
   date={2010},
   pages={23--29},
   review={\MR{2681719 (2011h:20096)}},
   doi={10.1090/conm/522/10289},
}
\bib{bellaiche:chenevier}{article}{
   author={Bella{\"{\i}}che, Jo{\"e}l},
   author={Chenevier, Ga{\"e}tan},
   title={Sous-groupes de ${\rm GL}_2$ et arbres},
   language={French, with English and French summaries},
   journal={J. Algebra},
   volume={410},
   date={2014},
   pages={501--525},
   issn={0021-8693},
   review={\MR{3201061}},
   doi={10.1016/j.jalgebra.2014.02.005},
}
\bib{bender}{article}{
   author={Bender, Edward A.},
   title={Classes of matrices over an integral domain},
   journal={Illinois J. Math.},
   volume={11},
   date={1967},
   pages={697--702},
   issn={0019-2082},
   review={\MR{0218340 (36 \#1427)}},
}
\bib{Berkove--Rahm}{article}{
      author={Berkove, Ethan},
      author = {Rahm, Alexander~D.},
       title={The mod 2 cohomology rings of $\op{SL}_2$ of the imaginary quadratic integers},
        pages={Preprint, https://hal.archives-ouvertes.fr/hal-00769261},
   date={2014}
}
\bib{brown:book}{book}{
   author={Brown, Kenneth S.},
   title={Cohomology of groups},
   series={Graduate Texts in Mathematics},
   volume={87},
   note={Corrected reprint of the 1982 original},
   publisher={Springer-Verlag, New York},
   date={1994},
   pages={x+306},
   isbn={0-387-90688-6},
   review={\MR{1324339 (96a:20072)}},
}
\bib{busch:conjugacy}{article}{
   author={Busch, Cornelia Minette},
   title={Conjugacy classes of $p$-torsion in symplectic groups over
   $S$-integers},
   journal={New York J. Math.},
   volume={12},
   date={2006},
   pages={169--182 (electronic)},
   issn={1076-9803},
   review={\MR{2242531 (2007i:20068)}},
}
\bib{conrad}{book}{
   author={Conrad, Keith},
   title={Ideal classes and matrix conjugation over $\mathbb{Z}$},
   series={Unpublished work,  \url{http://www.math.uconn.edu/~kconrad/blurbs/gradnumthy/matrixconj.pdf}},
   date={2012},
} 
\bib{dwyer:friedlander}{article}{
   author={Dwyer, William G.},
   author={Friedlander, Eric M.},
   title={Topological models for arithmetic},
   journal={Topology},
   volume={33},
   date={1994},
   number={1},
   pages={1--24},
   issn={0040-9383},
   review={\MR{1259512 (95h:19004)}},
   doi={10.1016/0040-9383(94)90032-9},
}
\bib{dwyer}{article}{
   author={Dwyer, W. G.},
   title={Exotic cohomology for ${\rm GL}_n({\bf Z}[1/2])$},
   journal={Proc. Amer. Math. Soc.},
   volume={126},
   date={1998},
   number={7},
   pages={2159--2167},
   issn={0002-9939},
   review={\MR{1443381 (2000a:57092)}},
   doi={10.1090/S0002-9939-98-04279-8},
}
\bib{froehlich}{article}{
   author={Fr{\"o}lich, Albrecht},
   title={Ideals in an extension field as modules over the algebraic
   integers in a finite number field},
   journal={Math. Z},
   volume={74},
   date={1960},
   pages={29--38},
   issn={0025-5874},
   review={\MR{0113877 (22 \#4708)}},
}
\bib{henn}{article}{
   author={Henn, Hans-Werner},
   title={The cohomology of ${\rm SL}(3,{\bf Z}[1/2])$},
   journal={$K$-Theory},
   volume={16},
   date={1999},
   number={4},
   pages={299--359},
   issn={0920-3036},
   review={\MR{1683179 (2000g:20087)}},
   doi={10.1023/A:1007746530913},
}
\bib{henn:lannes:schwartz}{article}{
   author={Henn, Hans-Werner},
   author={Lannes, Jean},
   author={Schwartz, Lionel},
   title={Localizations of unstable $A$-modules and equivariant mod $p$
   cohomology},
   journal={Math. Ann.},
   volume={301},
   date={1995},
   number={1},
   pages={23--68},
   issn={0025-5831},
   review={\MR{1312569 (95k:55036)}},
   doi={10.1007/BF01446619},
}
\bib{knudson:book}{book}{
   author={Knudson, Kevin P.},
   title={Homology of linear groups},
   series={Progress in Mathematics},
   volume={193},
   publisher={Birkh\"auser Verlag, Basel},
   date={2001},
   pages={xii+192},
   isbn={3-7643-6415-7},
   review={\MR{1807154 (2001j:20070)}},
   doi={10.1007/978-3-0348-8338-2},
}
\bib{kraemer:diplom}{book}{
   author={Kr\"amer, Norbert},
   title={Die Konjugationsklassenanzahlen der endlichen Untergruppen in der Norm-Eins-Gruppe von Maxi\-malordnungen in Quaternionenalgebren},
   series={Diplomarbeit, 	Mathematisches Institut, Universit\"at Bonn,  \url{http://tel.archives-ouvertes.fr/tel-00628809/}}, 
   date={1980},
   language={German},
}
\bib{latimer:macduffee}{article}{
   author={Latimer, Claiborne G.},
   author={MacDuffee, C. C.},
   title={A correspondence between classes of ideals and classes of
   matrices},
   journal={Ann. of Math. (2)},
   volume={34},
   date={1933},
   number={2},
   pages={313--316},
   issn={0003-486X},
   review={\MR{1503108}},
   doi={10.2307/1968204},
}
\bib{maclachlan:torsion}{article}{
   author={Maclachlan, C.},
   title={Torsion in arithmetic Fuchsian groups},
   journal={J. London Math. Soc. (2)},
   volume={73},
   date={2006},
   number={1},
   pages={14--30},
   issn={0024-6107},
   review={\MR{2197368 (2006j:20075)}},
   doi={10.1112/S0024610705022428},
}
\bib{mislin:tate}{article}{
   author={Mislin, Guido},
   title={Tate cohomology for arbitrary groups via satellites},
   journal={Topology Appl.},
   volume={56},
   date={1994},
   number={3},
   pages={293--300},
   issn={0166-8641},
   review={\MR{1269317 (95c:20072)}},
   doi={10.1016/0166-8641(94)90081-7},
}
\bib{mitchell}{article}{
   author={Mitchell, Stephen A.},
   title={On the plus construction for $B{\rm GL}\,{\bf Z}[\frac12]$ at the
   prime $2$},
   journal={Math. Z.},
   volume={209},
   date={1992},
   number={2},
   pages={205--222},
   issn={0025-5874},
   review={\MR{1147814 (93b:55021)}},
   doi={10.1007/BF02570830},
}
\bib{neukirch:azt}{book}{
    Author = {Neukirch, J\"urgen},
    Title = {{Algebraische Zahlentheorie}},
    ISBN = {3-540-54273-6/hbk},
    Pages = {xiii + 595 s.},
    Year = {1992},
    Publisher = {Springer-Verlag (Berlin)},
    Language = {German},
}
\bib{prestel}{article}{
   author={Prestel, Alexander},
   title={Die elliptischen Fixpunkte der Hilbertschen Modulgruppen},
   language={German},
   journal={Math. Ann.},
   volume={177},
   date={1968},
   pages={181--209},
   issn={0025-5831},
   review={\MR{0228439 (37 \#4019)}},
}
\bib{quillen:spectrum}{article}{
   author={Quillen, Daniel},
   title={The spectrum of an equivariant cohomology ring. I, II},
   journal={Ann. of Math. (2)},
   volume={94},
   date={1971},
   pages={549--572; ibid. (2) 94 (1971), 573--602},
   issn={0003-486X},
   review={\MR{0298694 (45 \#7743)}},
}
\bib{rahm:torsion}{article}{
   author={Rahm, Alexander~D.},
   title={The homological torsion of $\rm{PSL}_2$ of the imaginary
   quadratic integers},
   journal={Trans. Amer. Math. Soc.},
   volume={365},
   date={2013},
   number={3},
   pages={1603--1635},
   issn={0002-9947},
   review={\MR{3003276}},
   doi={10.1090/S0002-9947-2012-05690-X},
}
\bib{rahm:vcd}{article}{
   author={Rahm, Alexander~D.},
   title={Accessing the cohomology of discrete groups above their virtual
   cohomological dimension},
   journal={J. Algebra},
   volume={404},
   date={2014},
   pages={152--175},
   issn={0021-8693},
   review={\MR{3177890}},
   doi={10.1016/j.jalgebra.2014.01.025},
}
\bib{RW14}{article}{
      author={Rahm, Alexander~D.},
      author={Wendt, Matthias},
       title={A refinement of a conjecture of Quillen},
   journal={C. R. Math. Acad. Sci. Paris},
   volume={353},
   date={2015},
   number={9},
   pages={779--784},
   issn={1631-073X},
   review={\MR{3377672}},
   doi={10.1016/j.crma.2015.03.022},
}
\bib{sl3ok}{article}{
      author={Rahm, Alexander~D.},
      author={Wendt, Matthias},
       title={On Farrell-Tate cohomology of $SL_3$ over number rings},
        pages={work in advanced progress.},
   date={2015},
}
\bib{schneider:75}{article}{
   author={Schneider, Volker},
   title={Die elliptischen Fixpunkte zu Modulgruppen in
   Quaternionenschiefk\"orpern},
   language={German},
   journal={Math. Ann.},
   volume={217},
   date={1975},
   number={1},
   pages={29--45},
   issn={0025-5831},
   review={\MR{0384701 (52 \#5574)}},
}
\bib{serre:finite}{article}{
   author={Serre, Jean-Pierre},
   title={Propri\'et\'es galoisiennes des points d'ordre fini des courbes
   elliptiques},
   language={French},
   journal={Invent. Math.},
   volume={15},
   date={1972},
   number={4},
   pages={259--331},
   issn={0020-9910},
   review={\MR{0387283 (52 \#8126)}},
}		
\bib{serre:book}{book}{
   author={Serre, Jean-Pierre},
   title={Trees},
   series={Springer Monographs in Mathematics},
   note={Translated from the French original by John Stillwell;
   Corrected 2nd printing of the 1980 English translation},
   publisher={Springer-Verlag, Berlin},
   date={2003},
   pages={x+142},
   isbn={3-540-44237-5},
   review={\MR{1954121 (2003m:20032)}},
}
\bib{serre:toral}{article}{
  author={Serre, Jean-Pierre},
  title={Sous-groupes finis des groupes de Lie},
  journal={S{\'e}minaire {B}ourbaki},
  volume={864},
  year={1999}
}
\bib{sl2parabolic}{article}{
      author={Wendt, Matthias},
       title={Homology of $\op{SL}_2$ over function fields I: parabolic subcomplexes},
        pages={arXiv:1404.5825, to appear in J. reine angew. Math.},
   date={2015},
   doi={10.1515/crelle-2015-0047}
}	
\bib{ellnote}{article}{
      author={Wendt, Matthias},
       title={Homology of $\op{GL}_3$ of function rings of elliptic curves},
        pages={Preprint, arXiv:1501.02613},
   date={2015}
}
\end{biblist}
\end{bibdiv}

\end{document}